 \definecolor{darkred}{rgb}{0.4,0,0}
 \definecolor{darkgreen}{rgb}{0,0.4,0}
 \definecolor{darkblue}{rgb}{0,0,0.4}
 \newcommand\redsout{\bgroup\markoverwith{\textcolor{red}{\rule[0.5ex]{2pt}{0.8pt}}}\ULon}
\newcommand{\RR}{\mathbb{R}}
\newcommand{\N}{\mathbf{N}}
\newcommand{\CC}{\mathbb{C}}
\newcommand{\QQ}{\mathbb{Q}}
\renewcommand{\H}{\mathcal{H}}
\newcommand{\HH}{\mathbb{H}}
\newcommand{\ZZ}{\mathbb{Z}}
\newcommand{\NN}{\mathbb{N}}
\newcommand{\SL}{\operatorname{SL}}
\newcommand{\GL}{\operatorname{GL}}
\newcommand{\PSL}{\operatorname{PSL}}
\newcommand{\PSO}{\operatorname{PSO}}
\newcommand{\Maff}{\mathcal N}
\newcommand{\pr}{\mathrm{pr}}
\newcommand{\esq}{\mathcal{E}_{\scalebox{.382}{$\blacksquare$}}}%{\mathcal{O}}%
\renewcommand{\Re}{\operatorname{Re}}
\renewcommand{\Im}{\operatorname{Im}}
\newcommand{\diag}{\operatorname{diag}}
\newcommand{\ann}{\operatorname{Ann}}
\newcommand{\Aff}{\operatorname{Aff}}
\newcommand{\Aut}{\operatorname{Aut}}
\newcommand{\stab}{\operatorname{Stab}}
\newcommand{\tr}{\operatorname{Tr}}
\def\eg{\textit{e.g.}\xspace}
\def\ie{\textit{i.e.}\xspace}
\def\cf{\textit{cf.}\xspace}
\theoremstyle{plain}
\newtheorem{Theorem}{Theorem}[section]
\newtheorem{Corollary}[Theorem]{Corollary}
\newtheorem{Proposition}[Theorem]{Proposition}
\newtheorem{Lemma}[Theorem]{Lemma}
\newtheorem*{NoNumberClaim}{Claim}
\theoremstyle{definition}
\newtheorem{Remark}[Theorem]{Remark}
\begin{document}

\title[Diffusion rates for the wind-tree model]
{Diffusion rate in non-generic directions in the wind-tree model}

\date{\today}
\author{Sylvain Crovisier}
\address{Laboratoire de Mathématiques d'Orsay, CNRS - UMR 8628,
Université Paris-Saclay
91405 Orsay Cedex, France 
}
\email{Sylvain.Crovisier@universite-paris-saclay.fr}

\author{Pascal Hubert}
\address{Aix Marseille Universit\'e, CNRS, Centrale Marseille, Institut de Math\' ematiques de Marseille, I2M - UMR 7373\\13453 Marseille, France.}
\email{pascal.hubert@univ-amu.fr}

\author{Erwan Lanneau}
\address{
UMR CNRS 5582,
Univ. Grenoble Alpes, CNRS, Institut Fourier, F-38000 Grenoble, France}
\email{erwan.lanneau@univ-grenoble-alpes.fr}

\author{Angel Pardo}
\address{
Departamento de Matemática y Ciencia de la Computación,
Universidad de Santiago de Chile,
Las Sophoras 173, Estación Central, Santiago, Chile..
}
\email{angel.pardo@usach.cl}

\subjclass[2020]{Primary: 37E05. Secondary: 37D40}
\keywords{Lyapunov exponents}

\begin{abstract}
We show that any real number in $[0,1)$ is a diffusion rate for the wind-tree model with rational parameters. We will also provide a criterion in order to describe the shape of the Lyapunov spectrum of cocycles obtained as suspension of a representation. As an application, we exhibit an infinite family of wind-tree billiards for which the interior of the Lyapunov spectrum is a big as possible: this is the full square $(0,1)^2$. 
To the best of the knowledge of the authors, these are the first complete description where the interior of the Lyapunov spectrum is known explicitly in dimension two, even for general Fuchsian groups.
\end{abstract}

\maketitle
%\setcounter{tocdepth}{1}
%\tableofcontents

%****************************************************************
%****************************************************************
%****************************************************************
%****************************************************************
\section{Introduction}

The wind-tree model
(in a slightly different version)
was introduced by P.~Ehrenfest and
T.~Ehrenfest~\cite{Ehrenfest} about a century ago and investigated later by
J. Hardy and J.~Weber~\cite{Hardy:Weber}. All of these
studies had physical motivations.

More concretely, the model $T(a,b)$ is obtained by putting a fixed $[0,a]\times [0,b]$ rectangular scatterer
$\ZZ^2$-periodically in the plane with sides parallel to the axes and examines the
behavior of the trajectory of $x\in T(a,b)$ which follows the rules of elastic collision when it hits one of the rectangles, as in \Cref{fig:wtm}.

\begin{figure}[hbt]
\includegraphics{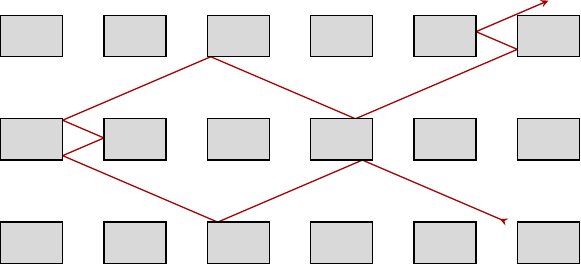}
\caption{The (periodic) wind-tree model.}
\label{fig:wtm}
\end{figure}

%%***********************************************
Several advances were obtained recently using the powerful technology
of deviation spectrum of measured foliations on surfaces, and the
underlying dynamics in the moduli space.
In this work we focus on the
diffusion rate, namely
$$
\delta_\theta(x) = \limsup_{T \to +\infty} \frac{\log d(x,\phi^\theta_T(x))}{\log T} \in [0,1],
$$
where $\phi^\theta_{T}(x)$ is the position of a particle
after time $T$ (with constant velocity) starting from position $x$ in direction $\theta\in\mathbb S^1$ and
$d(.,.)$ is the Euclidean distance on $\RR^2$.
In the sequel, we consider the
set $\mathcal D\subset \mathbb S^1$ of directions $\theta$ whose
$\delta_\theta(x)$ is constant for almost every $x\in T(a,b)$. The latter will be simply denoted $\delta_\theta$.

Delecroix--Hubert--Lelièvre~\cite{Delecroix:Hubert:Lelievre} proved that
$\mathcal D$ has full Lebesgue measure in $\mathbb S^1$ and that
$\delta_\theta=\frac{2}{3}$ for almost every $\theta$ and every
$(a,b)\in\mathcal E$, where
$$
\mathcal E=\{(a,b) \in(0,1)^2 ;\, 1/(1-a) = x + y \sqrt{D},\ 1/(1-b) = (1-x) + y \sqrt{D}, x,y \in \mathbb Q,\
\textrm{$D\in \mathbb Z_{>0}$}\}.
$$
In particular, $(0,1)^2 \cap \QQ^2 \subset \mathcal E$.
They also showed that this property holds for almost every parameter $(a,b)\in(0,1)^2$.
Later,~\cite{Delecroix:Hubert:Lelievre} strengthen the result to any parameter, using an argument by Chaika and Eskin~\cite{Chaika:Eskin}:
the diffusion rate does not depend either on the concrete values $(a,b)$ of the obstacle nor on the choice of a generic direction $\theta$ on the circle.
In this paper we establish the following result, providing a positive answer to a question of A.~Zorich~\cite{Zorich:discussion}.
\begin{Theorem}
\label{thm:main}
For any $(a,b)\in \mathcal E$ one has
$$
\left\{\delta_\theta;\, \theta\in \mathcal D \right\} = [0,1].
$$
\end{Theorem}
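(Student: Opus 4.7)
\emph{Proof proposal.} My plan is to reinterpret the diffusion rate $\delta_\theta$ as a Lyapunov exponent of a cocycle over the \Teichmuller geodesic flow, to identify this cocycle as the suspension of a representation of a Fuchsian group, and then to invoke the criterion announced in the abstract to show that its set of values along different rays exhausts $[0,1]$.

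First, I would follow the standard unfolding procedure: to the wind-tree billiard $T(a,b)$ one associates a compact translation surface $X(a,b)$ together with a $\ZZ^2$-cover $\widetilde X(a,b)$ reproducing the billiard orbits. The condition $(a,b) \in \mathcal{E}$ guarantees that $X(a,b)$ is a Veech surface, so its $\SL(2,\RR)$-orbit is a closed \Teichmuller curve $\Gamma \backslash \HH$ for some lattice $\Gamma$. Under this identification, $\delta_\theta$ equals the top Lyapunov exponent, along the \Teichmuller geodesic in direction $\theta$, of the Kontsevich--Zorich cocycle restricted to a specific two-dimensional $\Gamma$-invariant subspace $V \subset H^1(X(a,b),\RR)$ --- the subspace encoding the $\ZZ^2$-displacement. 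This reduction is the starting point of Delecroix--Hubert--Leli\`evre and yields the a priori upper bound $\delta_\theta \leq 1$.

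Second, I would observe that the $\Gamma$-action on $V$ factors through a representation $\rho \colon \Gamma \to \SL(2,\RR)$, so the relevant cocycle is exactly a suspension of $\rho$ over the geodesic flow on $\Gamma \backslash \HH$. This is precisely the setting of the criterion on Lyapunov spectra of suspended representations advertised in the abstract. For the $\rho$ arising from the wind-tree I would verify the hypotheses of this criterion, which should yield that the set of Lyapunov exponents attained along different geodesic rays equals the full interval $[0,1]$. To realize a prescribed $\delta \in [0,1]$ concretely, I would construct directions $\theta$ whose \Teichmuller geodesic has prescribed asymptotic behavior on $\Gamma\backslash\HH$: staying in a compact set to force $\delta = 0$; escaping into a cusp along an orbit where $\rho$ grows at the maximal hyperbolic rate for $\delta = 1$; and mixing these two regimes with controlled time fractions for intermediate values. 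Continuity of the top exponent with respect to such asymptotic statistics should then deliver every value in $[0,1]$.

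The main obstacle I anticipate is the second step: verifying the hypotheses of the criterion for the specific $\rho$ associated to the wind-tree, and --- crucially --- obtaining surjectivity onto all of $[0,1]$ rather than merely a dense or cofinal subset. This requires a precise analysis of $\rho$ at the parabolic elements of $\Gamma$ (\ie, at the cusps of the \Teichmuller curve), which is where the arithmetic nature of the condition $(a,b) \in \mathcal{E}$ enters and should provide the explicit control needed to turn a density-type statement into genuine surjectivity.
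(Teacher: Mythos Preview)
Your overall architecture---reduce to Lyapunov exponents of a suspended representation and then invoke the interval criterion---matches the paper. But three concrete points would prevent your argument from closing.

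First, the cocycle is not the suspension of a single representation on one $2$-dimensional space $V$: there are \emph{two} $2$-dimensional invariant subspaces $E^{+},E^{-}\subset H^1(X,\RR)$, with representations $\rho^{+},\rho^{-}$, and $\delta_\theta=\max\{\lambda^{+}(\mu),\lambda^{-}(\mu)\}$. The interval criterion (\Cref{t.main2}) is applied to the pair $(\rho^{+},\rho^{-})$, and one must check both are irreducible.

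Second, ``staying in a compact set'' does not force $\delta=0$: generic geodesics are recurrent and give $\delta=2/3$. The paper obtains $0$ by producing a \emph{hyperbolic} element $\gamma\in\ker(\rho^{+})\cap\ker(\rho^{-})$; the associated closed geodesic has $\lambda^{\pm}=0$. This requires showing the kernels are large (their limit set is the full circle, \Cref{cor:not:faithful}) and that their intersection contains a hyperbolic element.

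Third---and this is the essential gap---the value $1$ is \emph{never} realized as a Lyapunov exponent. By Forni's simplicity of the top exponent, $\lambda^{\pm}(\mu)<1$ for every ergodic $\mu$, so the Lyapunov mechanism only yields $[0,1)$. The paper gets $1\in\overline{\Lambda^{\max}}$ via a parabolic argument (take $\gamma$ parabolic with $\rho^{\pm}(\gamma)$ nontrivial, then $\gamma^n\gamma'$ has exponent $\to 1$), but to actually place $1$ in $\{\delta_\theta\}$ it abandons the Lyapunov viewpoint and exhibits a \emph{strip decomposition} of $X_\infty$ (\Cref{c:strip-decomposition}): a direction in which every trajectory escapes linearly. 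Your sketch, which tries to obtain $1$ by ``escaping into a cusp at maximal hyperbolic rate'', cannot reach $1$ exactly and misses this separate geometric step.
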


\begin{Remark}
Our methods allows to establish the corresponding result in the case of the Delecroix--Zorich variant~\cite{Delecroix:Zorich}.
More precisely, a Delecroix--Zorich wind-tree model consists of identical connected vertically and horizontally symmetric right-angled scatterers arranged $\ZZ^2$-periodically in the plane with the sides parallel to the axes, as in \Cref{fig:DZ-wtm}.
\begin{figure}[hbt]
\includegraphics{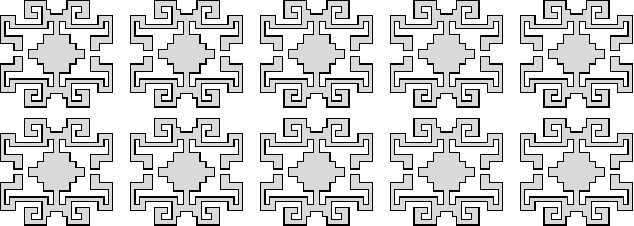}
\caption{The Delecroix--Zorich variant (\cf \cite[Figure 4]{Delecroix:Zorich}).}
\label{fig:DZ-wtm}
\end{figure}
\end{Remark}
The general discussion about diffusion rates above remains valid in this case (see \Cref{r:DZ-decomposition}) and we have the following.
\begin{Theorem}
\label{thm:main-DZ}
For any Delecroix--Zorich wind-tree model with {\em rational side lengths} one has
$$
[0,1) \subset \left\{\delta_\theta;\, \theta\in \mathcal D \right\} \subset [0,1].
$$
\end{Theorem}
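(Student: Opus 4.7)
The strategy is to run the argument for Theorem~\ref{thm:main}, substituting a Delecroix--Zorich scatterer for the classical rectangular one and exploiting the rationality of the side lengths to remain in the lattice setting. Starting from the billiard, I would first pass via the standard $(\ZZ/2\ZZ)^2$-unfolding to a compact translation surface $X$ and then to the $\ZZ^2$-cover $\widetilde X$ capturing the horizontal and vertical translations of the plane. The rationality of the sides forces $X$ to be a square-tiled surface, so that its $\SL(2,\RR)$-orbit closure is a single \Teichmuller curve and its Veech group is a lattice in $\SL(2,\RR)$. This puts us in the setting where the Kontsevich--Zorich cocycle and its Oseledets decomposition over the orbit closure are accessible.

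Next, following Delecroix--Hubert--Lelièvre, the diffusion rate $\delta_\theta$ in a direction $\theta\in\mathcal D$ is identified with the top Lyapunov exponent of a two-dimensional subbundle $\H_{\mathrm{drift}}\subset H^1(\widetilde X;\RR)$ given by the $\ZZ^2$-valued cohomology of the cover $\widetilde X\to X$. This subbundle is precisely a suspension of a representation of (a finite-index subgroup of) the Veech group of $X$ into $\GL(2,\RR)$, so the general criterion announced in the abstract applies. The criterion is expected to assert that, as $\theta$ varies over $\mathcal D$, the top Lyapunov exponent of such a suspension cocycle traces out an interval whose endpoints are controlled by the exponents of pseudo-Anosov directions.

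It remains to identify these endpoints. The lower one is $0$, realised along completely periodic directions of $\widetilde X$, where trajectories remain in bounded cylinder strips and hence diffuse sublinearly. The supremum is $1$: it is never attained because $\H_{\mathrm{drift}}$ is transverse to the tautological plane of $X$, but it is approached through directions fixed by pseudo-Anosov elements of the affine group whose spectral radius on $\H_{\mathrm{drift}}$ tends to their overall dilatation. Continuity and the intermediate-value property built into the criterion then fill in every value in $[0,1)$, yielding the first inclusion; the second inclusion $\{\delta_\theta\}\subset[0,1]$ is immediate from the unit-speed assumption on trajectories.

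The main obstacle will be the construction of pseudo-Anosov elements with spectral radius on $\H_{\mathrm{drift}}$ arbitrarily close to their dilatation. In the classical wind-tree case this is done by selecting elements with very long horizontal cylinder decompositions, which transfers most of the Perron eigenvalue onto $\H_{\mathrm{drift}}$; the same construction should work for a Delecroix--Zorich scatterer, provided one checks that the representation defining $\H_{\mathrm{drift}}$ has non-compact image, which in turn reduces to the non-triviality of the $\ZZ^2$-deck action on $H^1(\widetilde X;\RR)$ for any non-degenerate rational scatterer. Once these near-ballistic pseudo-Anosovs are in hand, the criterion delivers $[0,1)\subset\{\delta_\theta\mid\theta\in\mathcal D\}$.
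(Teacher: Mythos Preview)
Your overall architecture is the right one: pass to the compact square-tiled surface $X$, identify $\delta_\theta$ with a Lyapunov exponent of a sub-cocycle of the Kontsevich--Zorich cocycle over the \Teichmuller curve, invoke the interval criterion, and then pin down the endpoints. However, two steps contain genuine gaps.

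\textbf{The interval criterion applies to $\Lambda$, not to $\{\delta_\theta\}$.} The criterion (Theorem~\ref{t.main}/\ref{t.main2}) asserts that the set $\Lambda^{\max}=\{\lambda(\mu):\mu\in\mathcal M\}$ of top exponents over $g_t$-ergodic measures on the \Teichmuller curve is an interval. What links this to diffusion is the inclusion $\Lambda^{\max}\subset\{\delta_\theta:\theta\in\mathcal D\}$ (Theorem~\ref{thm:equality:exp:diff}); there is no claim that $\{\delta_\theta\}$ itself is an interval. So to fill in $[0,1)$ you must place both endpoints \emph{inside $\Lambda^{\max}$ (or its closure)}, not merely inside $\{\delta_\theta\}$.

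\textbf{The lower endpoint.} Your proposal realises $0$ via completely periodic directions of the $\ZZ^2$-cover $\widetilde X$. But such a direction is a cusp direction on the \Teichmuller curve: it is fixed by a parabolic, not a hyperbolic, element and carries no finite $g_t$-invariant measure. So this does not show $0\in\Lambda^{\max}$, and the interval criterion cannot then reach down to $0$. The paper instead produces a \emph{hyperbolic} element in $K=\ker(\rho^{+})\cap\ker(\rho^{-})$: by Theorem~\ref{thm:kernel} each kernel has full limit set, hence contains hyperbolics with distinct fixed points, and a commutator lands in $K\setminus\{e\}$. The corresponding closed geodesic gives a genuine $\mu\in\mathcal M$ with $\lambda^{+}(\mu)=\lambda^{-}(\mu)=0$.

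\textbf{The upper endpoint.} Your reduction to ``the representation has non-compact image'' is too weak. The argument that $\lambda^{\pm}_n\to 1$ for the pseudo-Anosovs $\gamma^n\gamma'$ requires $\gamma$ to be a parabolic with $\rho^{\pm}(\gamma)$ \emph{also parabolic} (i.e.\ $\gamma\notin K^{\pm}$), so that the traces grow at the same linear rate in $n$ on the base and on the bundle. Non-compact image could come entirely from hyperbolics while every parabolic lies in the kernel, and then the construction fails. For the Delecroix--Zorich variant the paper checks by hand that the horizontal and vertical parabolics lie in neither $K^{+}$ nor $K^{-}$ (see the remark following Corollary~\ref{c:parabolic-in-complement}); that explicit check is what you are missing.

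\textbf{A minor point.} There is not a single two-dimensional drift bundle but two, $E^{+}$ and $E^{-}$, with $\delta_\theta=\max\{\lambda^{+},\lambda^{-}\}$ (see Remark~\ref{r:DZ-decomposition}); this is why the endpoint arguments have to control both representations simultaneously.
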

We dont know whether $1$ belongs to $\left\{\delta_\theta;\, \theta\in \mathcal D \right\}$ in this case.

\subsection*{Joint diffusion}
We also exhibit an infinite family of wind-tree billiards
with all possible joint diffusion rates.
To our knowledge, this is a phenomenon that has not been previously exhibited.
More precisely, consider the \emph{horizontal diffusion rate}
$$
\delta^{\mathrm{h}}_\theta(x) = \limsup_{T \to +\infty} \frac{\log d_{\mathrm{h}}(x,\phi^\theta_T(x))}{\log T} \in [0,1],
$$
where $d_{\mathrm{h}}(.,.)$ is the Euclidean horizontal distance on $\RR^2$, that is, $d_{\mathrm{h}}((a,b),(c,d)) = |c - a|$.
Similarly, we define the \emph{vertical diffusion rate} $\delta^{\mathrm{v}}_\theta(x)$.
As in the case of the diffusion rate $\delta_\theta(x)$, the horizontal and vertical diffusion rates are constants for $\theta$ in a full measure set $\mathcal D' \subset \mathcal D \subset \mathbb S^1$, and denoted $\delta^{\mathrm{h}}_\theta, \delta^{\mathrm{v}}_\theta$, respectively.
We prove the following.

\begin{Theorem}
\label{thm:square}
For side lengths $(a,b)$ in the set
\[\esq = \left\{ \left(\frac{p}{q},\frac{r}{s}\right) \in (0,1)^2;\, \gcd(p,q) = \gcd(r,s) = 1, \ p,q,r,s \in 2\NN-1\right\},\]
the set of joint diffusion rates contains the full open square. More precisely,
$$
\{(0,0)\}\cup(0,1)^2 \subset \{(\delta^{\mathrm{h}}_\theta, \delta^{\mathrm{v}}_\theta);\, \theta \in \mathcal D'\}
\subset [0,1]^2.
$$
\end{Theorem}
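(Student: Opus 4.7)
The plan is to translate the joint diffusion rates into joint Lyapunov data and then invoke the paper's criterion on Lyapunov spectra of suspensions. After unfolding, the wind-tree table $T(a,b)$ becomes a translation surface $X(a,b)$ equipped with two commuting involutions $\sigma_{\mathrm{h}}, \sigma_{\mathrm{v}}$, and the horizontal and vertical displacement of a trajectory in direction $\theta$ are tracked by integrating two cohomology classes $f_{\mathrm{h}}, f_{\mathrm{v}} \in H^1(X(a,b);\RR)$ sitting in the $(-,+)$ and $(+,-)$ eigenspaces of $(\sigma_{\mathrm{h}}^*, \sigma_{\mathrm{v}}^*)$ respectively. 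In particular $\delta^{\mathrm{h}}_\theta$ and $\delta^{\mathrm{v}}_\theta$ are the top growth rates of the Kontsevich--Zorich cocycle restricted to these two $\SL(2,\RR)$-invariant subspaces, so the theorem amounts to describing the joint spectrum of this pair as $\theta$ varies.

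Next I would exploit the parity condition defining $\esq$ to decouple the two sub-cocycles. For $p,q,r,s$ all odd, $X(a,b)$ is a square-tiled cover whose character group splits compatibly with $(\sigma_{\mathrm{h}}^*, \sigma_{\mathrm{v}}^*)$: the two eigenspaces above descend to two sub-representations $\rho_{\mathrm{h}}, \rho_{\mathrm{v}}$ of a finite-index subgroup of the Veech group, carried by disjoint pieces of the relevant branched cover. The corresponding Kontsevich--Zorich suspension is then the direct sum of the suspensions of $\rho_{\mathrm{h}}$ and $\rho_{\mathrm{v}}$, and its joint Lyapunov spectrum should be the product of the spectra of the factors.

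With this decoupling in hand, I would apply the criterion on Lyapunov spectra of suspensions (established earlier in the paper) to each factor separately: this should produce, for every $(x,y) \in (0,1)^2$, a direction $\theta \in \mathcal D'$ at which $\lambda_{\mathrm{h}}(\theta) = x$ and $\lambda_{\mathrm{v}}(\theta) = y$ simultaneously, yielding $(0,1)^2 \subset \{(\delta^{\mathrm{h}}_\theta, \delta^{\mathrm{v}}_\theta);\, \theta \in \mathcal D'\}$. The pair $(0,0)$ arises from any completely periodic direction $\theta$ on $X(a,b)$: almost every trajectory is then periodic and contained in a bounded strip in both coordinates, so $\delta^{\mathrm{h}}_\theta = \delta^{\mathrm{v}}_\theta = 0$, and $\theta \in \mathcal D'$ by the a.e.\ constancy of this vanishing. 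The upper bound $\subset [0,1]^2$ is immediate from the definitions.

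The principal difficulty is the decoupling step: verifying that the joint spectrum of $\rho_{\mathrm{h}} \oplus \rho_{\mathrm{v}}$ genuinely fills the full product square, rather than collapsing to a diagonal or some thinner subvariety, requires applying the criterion's hypotheses simultaneously to both factors with \emph{independent} control of the two parameters. This in turn should reduce to producing sufficiently many hyperbolic elements of the Veech group acting with independent eigenvalue data on the two eigenspaces, and is precisely where the parity condition defining $\esq$ is needed and where I expect the technical heart of the argument to lie.
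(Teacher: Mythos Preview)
Your translation of joint diffusion rates into joint Lyapunov data for the two sub-cocycles is correct and matches the paper. But two steps fail.

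First, the decoupling step is not valid. The joint spectrum $\Lambda=\{(\lambda^+(\mu),\lambda^-(\mu)):\mu\in\mathcal M\}$ is \emph{not} in general the product of the individual spectra: each ergodic measure yields a single pair, and nothing forces all pairs in the product to be realised. Indeed the paper notes that for the three-square surface $\rho^+$ and $\rho^-$ are conjugate and $\Lambda$ collapses to the diagonal $x=y$. The paper's route is instead to apply the joint convexity criterion (\Cref{t.main2}): the interior of $\Lambda$ is convex and dense, so it suffices to place the four corners $(0,0),(1,1),(1,0),(0,1)$ in $\overline\Lambda$. The off-diagonal corners come from exhibiting a \emph{parabolic} element $\gamma\in\ker(\rho^-)\setminus\ker(\rho^+)$ (and symmetrically), then combining it with a hyperbolic $\gamma'$ via $\tr(\gamma^n\gamma')=ntc+\tr(\gamma')$, which drives one exponent to $1$ while the other stays at $0$. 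The parity condition on $\esq$ enters exactly here, through the geometric criterion (\Cref{thm:geom}) matching parabolics in $\ker(\rho^\pm)$ with one-cylinder directions whose core curve passes through the Weierstrass point $E$ or $F$; oddness of $p,q,r,s$ pins down the projections of the Weierstrass points into the unit square so that both configurations $(B,E)$ and $(C,F)$ actually occur.

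Second, your argument for $(0,0)$ is incorrect. A completely periodic direction on the compact surface $X$ need not give bounded trajectories on the $\ZZ^2$-cover $X_\infty$: a cylinder on $X$ with nontrivial monodromy lifts to a \emph{strip}, along which trajectories escape linearly. The paper uses this very phenomenon to obtain $\delta_\theta=1$ (\Cref{c:strip-decomposition}). The point $(0,0)\in\Lambda$ is instead produced by a hyperbolic element in $\ker(\rho^+)\cap\ker(\rho^-)$, built as a commutator of hyperbolic elements of the two (large) individual kernels.
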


%************************************************************
\subsection*{Lyapunov spectrum of hyperbolic surfaces}
\label{sec:la:spec}

It is now well established (see the approach originated in the pioneering work~\cite{Delecroix:Hubert:Lelievre})
that the diffusion rate can be interpreted as a Lyapunov exponent of a certain renormalizing
dynamical system associated to the billiard flow.
In the case of $(a,b) \in \mathcal E$, this corresponds to the geodesic flow on a hyperbolic surface
(see \Cref{sec:le} for more details).

The results presented in this section are independent to the wind-tree model and will serve for our purposes.

By $\mathbb H$ we denote the hyperbolic plane, identified with $\PSO(2,\RR)\backslash \PSL(2,\RR)$.
Its unit tangent space may be identified with $\PSL(2,\RR)$.
Let $S=\mathbb H/\Gamma$ be a hyperbolic surface,
where $\Gamma$ is a (non necessarily uniform) lattice of $\PSL(2,\RR)$.
The geodesic flow $(g_t)$ coincides with the (left) action of the diagonal group
$\left\{\left(\begin{smallmatrix} e^{t/2} & 0\\ 0 & e^{-t/2} \end{smallmatrix}\right);\, t \in \RR\right\}$
on $X= \PSL(2,\RR)/\Gamma$.

We consider a $2$-dimensional symplectic bundle $p\colon E\to X$ and a cocycle $A$ over $g_t$, \ie, a family of symplectic linear maps $A^t(x)\colon E(x)\to E(g_t(x))$, which depends continuously on $(x,t)\in X\times \RR$
and satisfies $A^s(g_t(x))\circ A^t(x)=A^{s+t}(x)$.

We assume that the cocycle is obtained as the \emph{suspension}
of a representation
$$
\rho\colon \Gamma\to \SL(2,\RR).
$$
That is, we consider the trivial bundle $\widehat E=\PSL(2,\RR)\times \RR^2$
and the trivial cocycle defined by $\widehat A^t(x).u=(g_t(x),u)$. The
group $\Gamma$ acts on $\widehat E$ by $\gamma(x,u)=(\gamma(x),\rho(\gamma).u)$. The bundle $E$
is then the quotient $\widehat E/\Gamma$. Since the actions of $(g_t)$ and $\Gamma$ commute on $\SL(2,\RR)$,
the cocycle $\widehat A$ on $\widehat E$ induces a cocycle $A$ on $E$.

We introduce the set $\mathcal M$ of Borel probability measures $\mu$ on $X$ that are invariant, ergodic for $g_t$
and \emph{Oseledets regular}, \ie, there exists $\lambda(\mu)\geq 0$ such that either
\begin{itemize}
\item $\lambda(\mu)=0$ and $\frac 1 t \log \|A^t(x)\|\to 0$ as $t\to \pm \infty$, for $\mu$-a.e. $x\in X$, or
\item $\lambda(\mu)>0$ and there is a measurable splitting $E=E^s\oplus E^u$
such that $\frac 1 t \log \|A^t(x)|_{E^s}\|\to -\lambda(\mu)$ and $\frac 1 t \log \|A^t(x)|_{E^u}\|\to \lambda(\mu)$
as $t\to \pm \infty$, for $\mu$-a.e. $x\in X$. Moreover $A^t(x).E^{s/u}(x)=E^{s/u}(g_t(x))$
for $\mu$-a.e. $x\in X$ and all $t\in \RR$.
\end{itemize}
The number $\lambda(\mu)$ is then uniquely defined and it will be referred to as the \emph{Lyapunov exponent} of $\mu$.

By Oseledets theorem, for a given ergodic probability measure $\mu$, if
the maps $x\mapsto |\log \|A^t(x)\||$ are integrable for each each $t\in \RR$, then $\mu\in\mathcal M$.
This is the case in particular to the compactly supported measures.
We will prove the following.

\begin{Theorem}\label{t.main}
The set $\Lambda_1\coloneqq\{\lambda(\mu);\, \mu \in \mathcal M\}$ is an interval.
\end{Theorem}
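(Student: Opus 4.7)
The plan is to show that any value strictly between two points of $\Lambda_1$ belongs to $\Lambda_1$: given $\mu_1,\mu_2\in\mathcal M$ with $\lambda(\mu_1)<c<\lambda(\mu_2)$, I would construct $\mu\in\mathcal M$ with $\lambda(\mu)=c$.

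First, reduce to periodic orbit measures. Every hyperbolic $\gamma\in\Gamma$ yields a $g_t$-invariant ergodic measure $\mu_\gamma\in\mathcal M$ supported on a closed geodesic of length $\ell(\gamma)$, with exponent $\lambda(\mu_\gamma)=\ell(\gamma)^{-1}\log r(\rho(\gamma))$, where $r$ denotes the spectral radius in $\SL(2,\RR)$. Using Markov codings of the geodesic flow on the finite-volume hyperbolic surface $S$ together with a Katok-type horseshoe argument, any $\mu\in\mathcal M$ can be approximated by periodic measures $\mu_{\gamma_n}\in\mathcal M$ with $\lambda(\mu_{\gamma_n})\to\lambda(\mu)$. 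Hence one may assume $\mu_i=\mu_{\gamma_i}$ is periodic, and that both closed orbits lie in a common compact $g_t$-invariant subset $K\subset X$ on which the geodesic flow is uniformly hyperbolic and enjoys the Bowen specification property.

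The core of the argument is an interpolation on $K$. Since $\log\|A^t(\cdot)\|$ is bounded on $K$ for each fixed $t$, the Lyapunov exponent extends to an affine continuous function on the convex set of $g_t$-invariant probability measures supported on $K$; the convex combination $s\mu_{\gamma_1}+(1-s)\mu_{\gamma_2}$ thus has exponent $s\lambda(\mu_{\gamma_1})+(1-s)\lambda(\mu_{\gamma_2})$, and I would choose $s\in(0,1)$ so that this equals $c$. Specification then produces periodic orbits shadowing concatenations of the form $\gamma_1^n\gamma_2^m$ with $\frac{n\ell(\gamma_1)}{n\ell(\gamma_1)+m\ell(\gamma_2)}\to s$, whose associated periodic measures converge weakly to $s\mu_{\gamma_1}+(1-s)\mu_{\gamma_2}$, with Lyapunov exponents tending to $c$. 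To realize $c$ as the exponent of a \emph{single} ergodic measure, I would upgrade this to an aperiodic orbit alternating between long excursions near the two closed orbits with finely tuned proportions, and use a Baire-category or Borel--Cantelli genericity argument in the space of shadowing sequences to ensure that the empirical time-averages converge to an ergodic $\mu\in\mathcal M$ with $\lambda(\mu)=c$.

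The main obstacle is exactly this final ergodicity step: a naive weak-$*$ limit of periodic measures with exponents tending to $c$ only yields the non-ergodic combination $s\mu_{\gamma_1}+(1-s)\mu_{\gamma_2}$, so one must carry out a careful shadowing construction exploiting the uniform hyperbolicity and specification on $K$ in order to promote it to a genuine ergodic measure. A secondary difficulty is that $X$ is non-compact when $\Gamma$ is a non-uniform lattice, so every construction must be localized on a suitable compact invariant subset containing both given orbits; this is what makes the preliminary reduction to compactly supported periodic measures indispensable.
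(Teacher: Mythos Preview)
Your overall strategy --- periodic approximation, reduction to a compact symbolic set, then interpolation via specification --- is exactly the paper's, and you are right that promoting the interpolated non-ergodic combination to a single ergodic measure is a genuine issue. The paper handles that step by a Sigmund-type weak-convexity theorem for ergodic measures on a subshift (\Cref{t.convexity} in the appendix), which is essentially the ``careful shadowing construction'' you allude to.

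The substantive gap lies earlier. You claim that because $\log\|A^t(\cdot)\|$ is bounded on $K$, the Lyapunov exponent extends to an affine \emph{continuous} function on the invariant measures on $K$. Affinity is correct (write $\lambda(\mu)=\inf_t\tfrac1t\int\log\|A^t\|\,d\mu$), but this only yields upper semicontinuity, not continuity. Consequently the weak convergence of the periodic measures $\nu(\gamma_1^n\gamma_2^m)$ to $s\mu_{\gamma_1}+(1-s)\mu_{\gamma_2}$ does \emph{not} force their exponents to tend to $c$: a priori one only gets $\limsup\le c$. Concretely, if $\rho(\gamma_1)$ is elliptic or parabolic, or if the expanding direction of $\rho(\gamma_1)$ happens to be the contracting direction of $\rho(\gamma_2)$, the spectral radius of $\rho(\gamma_1)^n\rho(\gamma_2)^m$ can be far from $r(\rho(\gamma_1))^n r(\rho(\gamma_2))^m$. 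The paper repairs this in two steps: it first replaces the given periodic orbits by nearby ones for which the cocycle matrix is hyperbolic (\Cref{l.hyperbolic-periodic}), and then chooses the heteroclinic transitions so that the induced one-step locally constant cocycle on the subshift is \emph{uniformly hyperbolic}, via the cone-field criterion of \Cref{p.ABY} (\Cref{l.extract-hyperbolic}). Only then does $\lambda(\mu)$ become the integral of the continuous potential $\log\|M|_{E^u}\|$, at which point both the interpolation and the ergodic realization (\Cref{p.symbolic}) go through.

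So the step you flagged as the main obstacle is solvable along the lines you sketch, but before the interpolation you must insert a reduction making the cocycle uniformly hyperbolic on $K$; boundedness of $\log\|A^t\|$ alone is not enough to control the exponents of the shadowing orbits.
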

This may be generalized as follows. We consider:
\begin{itemize}
\item two cocycles $A,A'$ on $E$ over $(g_t)$, defined as suspensions of representations $\rho,\rho'$,
\item the set of regular measures $\mathcal{M}\coloneqq\mathcal M_{A}\cap \mathcal M_{A'}$,
\item for each $\mu\in \mathcal{M}$, the Lyapunov vector
$(\lambda(\mu),\lambda'(\mu))\coloneqq(\lambda_{A}(\mu),\lambda_{A'}(\mu))$.
\end{itemize}

\begin{Theorem}\label{t.main2}
Let $\Lambda\coloneqq\{(\lambda(\mu),\lambda'(\mu));\, \mu\in \mathcal{M}\}$
and let $L\subset \mathbb{R}^2$ be the smallest affine space containing $\Lambda$.
If $\rho$, $\rho'$ are irreducible, the interior of $\Lambda$ (relative to $L$) is convex and dense in~$\Lambda$.
\end{Theorem}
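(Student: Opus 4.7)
The plan is to realize the joint spectrum $\Lambda$ as, up to closure, the convex hull of Lyapunov vectors of periodic-orbit measures, and to fill its interior by ergodic measures supported on horseshoes coming from the Anosov structure of the geodesic flow on $X=\PSL(2,\RR)/\Gamma$. The starting computation is that for a hyperbolic element $\gamma\in\Gamma$ whose projection to $S=\mathbb H/\Gamma$ is a closed geodesic of length $\ell(\gamma)$, the unique ergodic invariant measure supported on that orbit has Lyapunov vector
\[
v_\gamma=\left(\tfrac{1}{\ell(\gamma)}\log|\lambda_{\max}(\rho(\gamma))|,\ \tfrac{1}{\ell(\gamma)}\log|\lambda_{\max}(\rho'(\gamma))|\right),
\]
where $\lambda_{\max}$ is the eigenvalue of largest modulus. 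Thus $\{v_\gamma\}\subset\Lambda$, and the irreducibility of $\rho,\rho'$ enters to guarantee that this countable set affinely spans the full affine space $L$.

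For convexity of $\overline\Lambda$, I would apply the Anosov closing lemma on a compact hyperbolic invariant subset of $X$: given hyperbolic $\gamma_0,\gamma_1\in\Gamma$ with axes meeting a common compact region and any rational $t=p/q\in(0,1)$, there exists a hyperbolic $\delta\in\Gamma$ whose axis shadows $p$ copies of the $\gamma_1$-orbit and $q-p$ copies of the $\gamma_0$-orbit glued by uniformly bounded transitions, so that $\rho(\delta)$ is, up to bounded correction factors, a product $\rho(\gamma_1)^p\rho(\gamma_0)^{q-p}$ and $v_\delta\to tv_{\gamma_1}+(1-t)v_{\gamma_0}$ as $q\to\infty$. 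To realize actual interior points rather than only limits, I would pass to a horseshoe version: a finite family $\gamma_0,\ldots,\gamma_n\in\Gamma$ whose $v_{\gamma_i}$ affinely span $L$ produces, via Markov partitions, a compact locally maximal hyperbolic set $H\subset X$ whose return map is conjugate to a mixing subshift; Sigmund's approximation theorem, applied to the continuous potentials $\log\|\rho(\cdot)\|$ and $\log\|\rho'(\cdot)\|$ on $H$, then implies that $\mu\mapsto(\lambda(\mu),\lambda'(\mu))$ covers an open (in $L$) neighborhood of every point of the open simplex on the $v_{\gamma_i}$'s. Letting the family vary gives simultaneously the convexity of $\mathrm{int}_L(\Lambda)$ and its density in the convex hull of $\{v_\gamma\}$, hence in $\Lambda$.

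The main obstacle I expect is the non-compactness of $X$ when $\Gamma$ is non-uniform: the closing/specification machinery above lives on compact hyperbolic subsets, whereas $\mathcal M$ also contains measures whose support wanders into cusps. I would first establish the theorem within the subset $\mathcal M_c\subset\mathcal M$ of compactly supported ergodic measures, and then argue that cuspidal measures do not enlarge $\overline\Lambda$ beyond what compact horseshoes already reach: along a cusp excursion, the suspension cocycle is, up to a bounded factor, an iterate of a parabolic element of $\rho(\Gamma)$, whose Lyapunov contribution per unit time vanishes, so these exponents lie in the limit set of the compact-support exponents. The role of irreducibility is precisely to ensure that $\{v_\gamma\}$ is rich enough to span $L$ and to supply the pairs $(\gamma_0,\gamma_1)$ with linearly independent differences that drive the convexity construction; without it, $\Lambda$ could collapse to a proper affine subspace of what would otherwise be $L$ and the horseshoe argument would fail to produce interior points.
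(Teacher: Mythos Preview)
Your overall architecture---approximate $\Lambda$ by periodic-orbit vectors, build horseshoes from finite families of closed geodesics, then use a Sigmund-type convexity result on the resulting subshift---matches the paper's. But there is a genuine gap at the point where you invoke ``Sigmund's approximation theorem, applied to the continuous potentials $\log\|\rho(\cdot)\|$ and $\log\|\rho'(\cdot)\|$''. The Lyapunov exponent of a matrix cocycle is \emph{not} the Birkhoff average of $\log\|M_{\varepsilon_0,\varepsilon_1}\|$; it is the exponential growth rate of the norm of a matrix \emph{product}, which is only subadditive, and in general is not even continuous in the weak-$\ast$ topology on measures. So Sigmund's theorem, which controls integrals of continuous functions, does not by itself let you prescribe $(\lambda(\mu),\lambda'(\mu))$. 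The paper resolves this with a step you are missing: it first extracts a sub-horseshoe on which the one-step locally constant cocycles $M,M'$ are \emph{uniformly hyperbolic} (cone criterion), and only then does the exponent become the Birkhoff average of the genuinely continuous function $\log\|M|_{E^u}\|$. This reduction is exactly where irreducibility of $\rho,\rho'$ is used---not to make $\{v_\gamma\}$ span $L$ as you suggest, but to modify the heteroclinic transitions so that no invariant line survives, forcing hyperbolicity of the extracted cocycle.

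Two smaller points. First, Sigmund gives only weak-$\ast$ density of periodic measures; to place actual points of $\Lambda$ (not just $\overline\Lambda$) at prescribed barycenters you need the sharper statement that an ergodic measure can realize \emph{exactly} a given convex combination of finitely many integrals, which the paper proves separately. Second, your proposed treatment of cusps is both unnecessary and incorrect: any $\mu\in\mathcal M$ is Oseledets regular and has recurrent points, so the closing lemma already approximates it by periodic orbits without separating compact from non-compact support; and your claim that ``the suspension cocycle along a cusp excursion is an iterate of a parabolic element of $\rho(\Gamma)$'' is false in general, since $\rho$ may send a parabolic in $\Gamma$ to a hyperbolic matrix.
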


\begin{Remark} Theorems~\ref{t.main} and~\ref{t.main2} are closely related to the work of Breuillard and Sert (see \cite{Breuillard:Sert}) for random products of matrices. A random version of Theorems \ref{t.main} is proven in Feng (see \cite{Feng})  in a different setting. 
\end{Remark}

To each wind tree model are associated some lattice $\Gamma$, two irreducible representations $\rho^-,\rho^+$
and, by the previous discussion, some set $\Lambda$ which satisfies
$\Lambda\subset \{(\delta^{\mathrm{h}}_\theta, \delta^{\mathrm{v}}_\theta);\, \theta \in \mathcal D'\}$.
A main part of this work is to control the set $\Lambda$ by showing that
$(0,0)\in \Lambda$ and $(1,1)\in \overline \Lambda$; in the setting of \Cref{thm:square}
we also prove that $\Lambda$ contains the full open square $(0,1)^2$.

\begin{Remark}
For the wind-tree model associated to the surface with three squares
that is, when the scatterers are squares of side length one half, $\Lambda$ consists of the segment $x=y$:
indeed the representations $\rho^-,\rho^+$ are conjugated, see~\cite[Section~2.3.2]{Pardo:quantitative}. This contrasts with \Cref{thm:square}. It would be interesting to have instances of $\Lambda$ with different (intermediate) shapes.
Also that we don't know how much can differ the sets $\Lambda$ and $\{(\delta^{\mathrm{h}}_\theta, \delta^{\mathrm{v}}_\theta);\, \theta \in \mathcal D'\}$.
\end{Remark}

%********************************************************
\subsection*{Structure of the paper}

In \Cref{sec:connectedness} we prove \Cref{t.main} and \Cref{t.main2} which are needed for the proof of \Cref{thm:main} and \Cref{thm:square}, respectively.

In \Cref{sec:background} we review useful results concerning translation surfaces, their moduli spaces, affine invariant submanifolds and the Kontsevich--Zorich cocycle.

In \Cref{sec:le} we relate the diffusion rate of the wind-tree model to the Lyapunov exponents of the Kontsevich--Zorich cocycle following \cite{Delecroix:Hubert:Lelievre,Delecroix:Zorich}.

In \Cref{sec:parabolic} we give a geometric criterion to classify whether a parabolic element is in the kernel of the natural representations.

In \Cref{sec:endpoints} we conclude the proof of \Cref{thm:main} by showing that $(0,0) \in \Lambda$ and $(1,1) \in \overline\Lambda$.

In \Cref{sec:square} we prove \Cref{thm:square} exhibiting an infinite family of wind-tree billiards
for which the Lyapunov spectrum contains the full square $(0,1)^2$.

%********************************************************
\subsection*{Acknowledgments} The authors thank Yves Benoist and Igor Krichever for helpful conversations.
This work was partially supported by the ANR Project GeoDyM, the LabEx PERSYVAL-Lab (ANR-11-LABX-0025-01) and the ERC project 692925 \emph{NUHGD}.
This work was also partially supported by Centro de Modelamiento Matemático (CMM), ACE210010 and FB210005, BASAL funds for centers of excellence from ANID-Chile, and the MATH-AmSud 21-MATH-07 grant.
The fourth named author was also supported by ANID-Chile through the FONDECYT 3190257 and 1221934 grants.

%************************************************************
%************************************************************
%************************************************************
\section{Lyapunov spectrum}
\label{sec:connectedness}

In this section we prove \Cref{t.main} and \Cref{t.main2} which are used latter for the proof of \Cref{thm:main} and \Cref{thm:square}, respectively.

The proof of \Cref{t.main2} has two cases:
\begin{description}
\item[\hypertarget{case1}{Case 1}] $\Lambda$ is contained in a line.
It is then enough to consider one cocycle and we are reduced to \Cref{t.main}.
\item[\hypertarget{case2}{Case 2}] $\Lambda$ is not contained in a line.
\end{description}
The proof in the \hyperlink{case1}{first case} (or the proof of \Cref{t.main}) is very similar
to the proof of the \hyperlink{case2}{second case}, but simpler.
In the following we only detail the \hyperlink{case2}{second case}.

\subsection{Periodic approximation}
We begin by establishing a property of Lyapunov exponents supported on periodic orbits (\cf~\cite{Benoist}).

\begin{Proposition}\label{p.per}
$\Lambda_{per}=\{(\lambda(\mu),\lambda'(\mu)),\;\mu \in \mathcal M
\text{ supported on a periodic orbit}\}$ is dense in~$\Lambda$.
\end{Proposition}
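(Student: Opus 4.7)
The plan is to carry out a periodic approximation argument in the spirit of Kalinin's theorem, exploiting the special structure of our cocycles as suspensions of representations. The decisive simplification is the following observation: if a geodesic of period $T$ corresponds to a conjugacy class of $\gamma\in\Gamma$ (the deck transformation associated to one period), then the holonomy of $A$ around it is exactly $\rho(\gamma)$, and the periodic measure $\mu_\gamma$ has Lyapunov vector
\[
\bigl(\lambda(\mu_\gamma),\lambda'(\mu_\gamma)\bigr)=\frac1T\bigl(\log |\alpha(\rho(\gamma))|,\log|\alpha(\rho'(\gamma))|\bigr),
\]
where $\alpha(\cdot)$ denotes the spectral radius. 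So we only need to produce elements $\gamma_n\in\Gamma$ with translation length $T_n\to\infty$ whose axes equidistribute in a controlled way and whose images under $\rho$ and $\rho'$ grow at the prescribed rates.

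I would fix $\mu\in\mathcal M$, a Birkhoff-generic point $x\in X$ at which both Oseledets limits $\tfrac1T\log\|A^T(x)\|\to \lambda(\mu)$ and $\tfrac1T\log\|(A')^T(x)\|\to \lambda'(\mu)$ hold, and the empirical measures $\tfrac1T\int_0^T\delta_{g_t(x)}\,dt$ converge weak-$*$ to $\mu$. Fixing a compact set $K\subset X$ of large $\mu$-measure (possible because $\mu$ is Radon, even when $\Gamma$ is non-uniform), the orbit of $x$ returns to $K$ with asymptotic frequency close to $\mu(K)$. Applying the Anosov closing lemma (valid on $K$ with uniform hyperbolicity constants for the geodesic flow) at an appropriate return time $T_n\approx T$ with $g_{T_n}(x)$ close to $x$, we obtain a periodic point $y_n$ of period $T_n$ whose orbit shadows the segment $\{g_t(x):0\le t\le T_n\}$ with exponentially small error. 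The periodic measures $\mu_{\gamma_n}$ then converge weak-$*$ to $\mu$ as $n\to\infty$.

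Next I would control the cocycles. Lifting to the universal cover $\PSL(2,\RR)$, write $g_{T_n}(\tilde x)=\tilde x\cdot \gamma_n$ modulo a small perturbation supplied by the closing lemma. Since $\rho,\rho'$ are continuous on the discrete group $\Gamma$ and the cocycle is continuous on the trivial bundle over $\PSL(2,\RR)$, the shadowing error between $A^{T_n}(x)$ and $\rho(\gamma_n)$ is uniformly bounded, so $\tfrac1{T_n}\log\|\rho(\gamma_n)\|\to\lambda(\mu)$ and similarly for $\rho'(\gamma_n)$. Finally, because the gap between $\log\|M^n\|$ and $n\log|\alpha(M)|$ is $O(\log n)$ for any $M\in \SL(2,\RR)$, passing from the cocycle norm to the spectral radius is harmless, and we conclude that $(\lambda(\mu_{\gamma_n}),\lambda'(\mu_{\gamma_n}))\to(\lambda(\mu),\lambda'(\mu))$.

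The main obstacle I expect is dealing with the non-compactness of $X$: for lattices with cusps the Anosov closing lemma only provides uniform constants on compact subsets, and a $\mu$-typical orbit may spend long excursions in the cusp. The remedy is to truncate and to close the orbit only at return times to a compact set $K$ of large measure; the Oseledets-regularity assumption on $\mu$ guarantees that the Lyapunov averages along the truncated portion still match $\lambda(\mu),\lambda'(\mu)$ because the contribution of cusp excursions has negligible density. A secondary, milder issue is the difference between \emph{norm growth} along the orbit and \emph{spectral radius} of the closed-up holonomy, which is handled by the elementary $O(\log n)$ bound above.
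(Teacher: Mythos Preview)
Your overall strategy coincides with the paper's: close up an Oseledets-regular recurrent orbit via the Anosov closing lemma and exploit that the holonomy of the resulting periodic orbit is exactly $\rho(\gamma)$ (the paper in fact observes that in a local trivialization $A^t(x)=A^s(z)=\rho(\gamma)$ on the nose, not merely up to bounded error). Your handling of non-compactness by returning to a compact set is also what the paper does.

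There is, however, a genuine gap in your last step. From $\tfrac1{T_n}\log\|\rho(\gamma_n)\|\to\lambda(\mu)$ you want to deduce $\tfrac1{T_n}\log\alpha(\rho(\gamma_n))\to\lambda(\mu)$, since the Lyapunov exponent of the periodic measure is $\tfrac1{T_n}\log\alpha(\rho(\gamma_n))$. The bound you quote, $\log\|M^n\|-n\log\alpha(M)=O(\log n)$, concerns iterates of a \emph{fixed} matrix and says nothing about comparing $\|\rho(\gamma_n)\|$ with $\alpha(\rho(\gamma_n))$ along a \emph{sequence} of distinct matrices. The trivial inequality $\alpha(M)\le\|M\|$ gives only the upper bound (this alone settles the case $\lambda(\mu)=0$, since Lyapunov exponents are nonnegative). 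For $\lambda(\mu)>0$ the reverse direction can fail badly: a parabolic $M=\bigl(\begin{smallmatrix}1&N\\0&1\end{smallmatrix}\bigr)$ has $\|M\|\sim N$ but $\alpha(M)=1$. Nothing in your argument excludes $\rho(\gamma_n)$ being close to parabolic.

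The paper handles this with a cone argument that makes up roughly half of its proof: one arranges that both $x$ and $g_t(x)$ lie in a positive-measure set on which the Oseledets bundles $E^s,E^u$ vary continuously; then $\rho(\gamma)=A^t(x)$ sends a thin cone about $E^u(x)$ into itself and expands vectors there at rate close to $e^{t\lambda(\mu)}$, forcing an eigenvalue of that size and hence the desired spectral radius. This is precisely the missing ingredient, and your $O(\log n)$ remark does not substitute for it.
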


\begin{proof} Let us consider an arbitrary measure $\mu\in \mathcal M$.
Any point $x$ in a set with full $\mu$-measure is recurrent and satisfies Oseledets theorem.
One lifts $x$ as a point $\widehat x\in \PSL(2,\RR)$ and chooses a small neighborhood $U$ disjoint from
its images $\gamma(U)$, $\gamma\in \Gamma\setminus \{e\}$.
Since $x$ is recurrent by the flow,
there exists $t>0$ large and $\gamma\in \Gamma$
such that $\gamma(g_t(\widehat x))$ is arbitrarily close to $\widehat x$.
The Anosov closing lemma then gives $\widehat z\in U$ and $s>0$ such that $\gamma(g_s(\widehat z))=\widehat z$.
Moreover $|s-t|$ is arbitrarily small if $\gamma(g_t(\widehat x))$ is close enough to $\widehat x$.
We denote by $z$ the projection of $\widehat z$ to $X$: it is fixed by $g_s$.
Since the bundle $E$ is locally trivial it is defined from a representation $\rho\colon \Gamma\to \SL(2,\RR)$,
so that taking a chart over $U$, one has $A^t(x)=A^s(z)=\rho(\gamma)$.
\bigskip

Let us first assume that $\lambda(\mu)=0$.
Having chosen $t$ large, $\frac 1 t \log \|A^t(x)\| $ is close to $0$.
Since $|s-t|$ is small, one deduces that $\frac 1 s \log \|A^s(z)\|$ is also close to zero.
We have built a periodic orbit whose Lyapunov exponent associated to the cocycle $A$ is arbitrarily close to $0$.
\bigskip

We then assume $\lambda(\mu)>0$: there exists a measurable hyperbolic decomposition $E= E^s\oplus E^u$.
We may have chosen $x$ and $g_t(x)$ in a
set $Y\subset X$ with positive $\mu$-measure where the bundles $E^s,E^u$
vary continuously and are invariant by $A^t(x)$.
We then check that the Lyapunov exponent $\lambda(z)$ of the periodic orbit of $z$
is close to $\lambda(\mu)$.
(This is similar to \cite[Theorem 3.8]{Abdenur-Bonatti-Crovisier}.)

Any vector $v$ in $\RR^2$ decomposes as $v=v_x^s+v_x^u$ according to the splitting
$E^s(x)\oplus E^u(x)$.
Let us choose $\alpha>0$ small and consider the cone $\mathcal C$ of vectors $v$
satisfying $\|v_x^s\|\leq \alpha \|v_x^u\|$.
For $y\in Y$ close to $x$, the decomposition $E^s(y)\oplus E^u(y)$ is close,
hence any vector $v=v_y^s+v_y^u\in E^s(y)\oplus E^u(y)$
may be decomposed as $v_x^s+v_x^u\in E^s(x)\oplus E^u(x)$
such that $\|v_x^s\|\leq \alpha^2 \|v_y^u\|+(1+\alpha^2)\|v_y^s\|$
and $\|v_x^u\|\geq (1-\alpha^2) \|v_y^u\|-\alpha^2\|v_y^s\|$.

For $t$ large enough, $\|A^t(x).v_x^s\|\leq \frac 1 2 \|v_x^s\|$ and $\|A^t(x).v_x^u\|\geq 2 \|v_x^u\|$.
Since $g_t(x)\in Y$ is close to $x$,
the image
$\bar v\coloneqq A^t(x).v$ decomposes as $\bar v^s+\bar v^u\in E^s(x)\oplus E^u(x)$
satisfying
$$\|\bar v^s\|\leq \big(\alpha^2+(1+\alpha^2)\tfrac \alpha4\big)\; \|A^t(x).v^u_x\|
\text{ and }
\|\bar v^u\|\geq \big(1-\alpha^2-\tfrac{\alpha^3} 4\big)\; \|A^t(x).v^u_x\|.$$
Hence, having chosen $\alpha$ small enough, the image $\bar v$ is still in the cone $\mathcal C$.
Moreover
$$\|A^t(x)\|\geq \frac{\|\bar v\|}{\|v\|}\geq \big(1-\alpha^2-\tfrac {\alpha^3} 4\big)\;\frac{1-\alpha}{1+\alpha}\; \frac{\| A^t(x).v^u_x\|}{\|v^u_x\|}.$$
Since both $\frac 1 t \log \|A^t(x)\| $ and $\frac 1 t \log \|A^t(x)|_{E^u(x)}\| $ are close to $\lambda(\mu)$, and since $|s-t|$ is small,
this proves that vectors in the cone $\mathcal C$ grow exponentially under forward iterations of $A^s(z)$
with a rate close to $\lambda(\mu)$.
The Lyapunov exponent of the invariant measure supported on the orbit of $z$ is thus close to $\lambda(\mu)$ as required.

The same analysis applies to the other cocycle $A'$. Hence the two Lyapunov exponents
$\lambda(z),\lambda'(z)$ of the periodic orbit of $z$ are close to those of $\mu$.
\end{proof}

\subsection{Reduction to a symbolic setting}

We will work with the alphabet $\{1, 2, 3\}$.
Let $\Sigma\coloneqq\{1,2,3\}^{\mathbb{Z}}$ be the space of sequences $\underline \varepsilon=(\varepsilon_k)_{k\in \ZZ}$ endowed with the shift map $\sigma$. If $w=\varepsilon_0\varepsilon_1\cdots \varepsilon_{n-1}$ is a finite word, we denote by
$\overline{w}$ the periodic sequence $\underline \varepsilon$ such that
$\varepsilon_{in+j}=\varepsilon_j$ for each $i\in \ZZ$ and $0\leq j<n$.
Also $i^\ell$ denotes the word $i\cdots i$ where the symbol $i$ is repeated $\ell$ times.
And $[i]\coloneqq\{\underline \varepsilon,\; \varepsilon_0=i\}$ is the $1$-cylinder associated to the symbol $i$.

For any continuous function $r\colon \Sigma \to (0,\infty)$, one defines
the \emph{suspension} $\Sigma_r$ as the quotient of $\Sigma\times \RR$ by the map
$(\underline \varepsilon,s)\mapsto (\sigma(\underline \varepsilon), s- r(\underline \varepsilon))$.
It is endowed with the flow $(\sigma_r^t)_{t\in \RR}$ induced by
$\sigma_r^t(\underline \varepsilon, s)= (\underline \varepsilon, s+t)$.
\medskip

A \emph{cocycle} $M$ on $\Sigma$ is a map which associates
to $(\underline\varepsilon,n)\in\Sigma\times \ZZ$ a matrix $M^n(\underline\varepsilon)\in \SL(2,\RR)$
and satisfies
$M^m(\sigma^n(\underline \varepsilon))\circ M^n(\underline \varepsilon)=M^{n+m}(\underline \varepsilon)$.
It defines by suspension a cocycle $\mathcal{A}$ on $\Sigma_r$: let $\mathcal{E}\to \Sigma_r$ be the bundle which is the quotient
of $(\Sigma\times \RR)\times \RR^2$ by the map
$$(\underline \varepsilon,s,v)\mapsto (\sigma(\underline \varepsilon), s- r(\underline \varepsilon),M^1(\underline \varepsilon).v).$$
The cocycle $\mathcal{A}$  acts on $\mathcal{E}$ and is the quotient of the trivial cocycle
$\mathcal{A}^t(\underline \varepsilon,s,v)=(\underline \varepsilon,s+t,v)$.

A cocycle on $\Sigma$ is \emph{one-step locally constant} if $M^1(\underline \varepsilon)$
only depends on $(\varepsilon_0,\varepsilon_1)$. Such a cocycle is defined by $9$ linear maps
$M_{i,j}\in \SL(2,\RR)$ with $i,j\in\{1,2,3\}$.
For any word $w=\varepsilon_0\cdots\varepsilon_{n}$, we denote
$$M_w=M_{\varepsilon_0\cdots\varepsilon_{n}}=M_{\varepsilon_{n-1},\varepsilon_{n}}\cdots M_{\varepsilon_0,\varepsilon_1},$$
\ie, $M_w$ coincides with $M^n(\underline \varepsilon')$ associated to the sequences $\underline \varepsilon'$ such that $\varepsilon'_i=\varepsilon_i$ for $0\leq i\leq n$.

A one-step locally constant cocycle $M$ on $\Sigma$ is \emph{irreducible} if there does not exist
a linear $1$-space $L\subset \RR^2$ which is invariant by each map $M_w$ where $w=\varepsilon_0\cdots\varepsilon_{n}$
satisfies $\varepsilon_0=\varepsilon_n$.
\medskip

One can extract a symbolic system from our geometrical setting:

\begin{Lemma}
\label{lm:1}
For any periodic points $z_1,z_2,z_3\in X$ with disjoint orbits, there exist:
\begin{itemize}
\item a invariant compact set $K\subset X$ containing $z_1,z_2,z_3$ and invariant by the flow $g$,
\item a continuous function $r\colon \Sigma=\{1,2,3\}^\ZZ \to (0,\infty)$,
\item a conjugacy $h\colon \Sigma_r\to K$ between $(\Sigma_r,\sigma_r)$ and $(K,g)$
such that $h(\overline i,0)=z_i$,
\item one-step locally constant cocycles $M,M'$,
defining cocycles $(\mathcal{E},\mathcal{A})$,  $(\mathcal{E}',\mathcal{A}')$ on $(\Sigma_r,\sigma_r)$,
that are conjugated to the the cocycles $(E, A)$, $(E, A')$, on $(K,g)$,
\end{itemize}
Moreover, if the representations $\rho,\rho'$ are irreducible and $\lambda(z_1),\lambda'(z_1)\neq 0$, then the previous is also true with $M,M'$ irreducible cocycles.
\end{Lemma}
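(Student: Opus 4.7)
The plan is to realize $K$ as a hyperbolic basic set (horseshoe) for $g_t$ that contains the three periodic orbits through $z_1,z_2,z_3$ and is coded by the full shift on $\{1,2,3\}$, and then to read the cocycles $A,A'$ on this symbolic model through the flat structures on $E,E'$ induced by the representations $\rho,\rho'$.

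For the first step I would build the horseshoe by a classical shadowing argument. Each periodic orbit $O_i$ is hyperbolic for $g_t$, so at $z_i$ one chooses a small flow box $U_i$ together with a transverse local section $\Sigma_i$ carrying the local stable and unstable laminations. Topological mixing of $g_t$ on $X$ (valid because $\Gamma$ is a lattice) yields, for every ordered pair $(i,j)$, a transverse heteroclinic orbit from $O_i$ to $O_j$ meeting $U_i$ and $U_j$. Thickening these intersections into a Markov-like family of sub-rectangles $R_{ij}\subset\Sigma_i$ and intersecting over all finite itineraries (Bowen--Smale horseshoe construction together with the Anosov shadowing lemma) produces a compact invariant hyperbolic set whose first-return map on $\bigsqcup_i\Sigma_i$ is topologically conjugate to the full shift $(\Sigma,\sigma)$, with the fixed point $\overline i$ corresponding to $z_i$. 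Suspending by the continuous return-time $r\colon\Sigma\to(0,\infty)$ produces the flow conjugacy $h\colon(\Sigma_r,\sigma_r)\to(K,g)$ with $h(\overline i,0)=z_i$.

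Next I would exploit the flat structure of $E,E'$. Fix a fundamental domain in $\PSL(2,\RR)$ for $\Gamma$ and lift each $z_i$ and each flow box $U_i$; this trivializes $E|_{U_i}$ as $U_i\times\RR^2$ (and similarly for $E'$). For each ordered pair $(i,j)$, take the preferred transition orbit segment of the horseshoe from $\Sigma_i$ to $\Sigma_j$; lifting it from the chosen lift of $z_i$ to $\PSL(2,\RR)$ gives a well-defined element $\gamma_{ij}\in\Gamma$, and one sets $M_{i,j}\coloneqq\rho(\gamma_{ij})$ and $M'_{i,j}\coloneqq\rho'(\gamma_{ij})$. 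Direct inspection shows that in these trivializations the cocycles $A,A'$ restricted to $K$ are conjugated to the suspensions $\mathcal A,\mathcal A'$ of the one-step locally constant cocycles $M,M'$. In particular $M_{i,i}=\rho(\gamma_i)$ and $M'_{i,i}=\rho'(\gamma_i)$, where $\gamma_i\in\Gamma$ is the deck element representing the period of $O_i$.

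For the irreducibility statement, assume $\rho,\rho'$ irreducible and $\lambda(z_1),\lambda'(z_1)\neq 0$. Then $\rho(\gamma_1)$ and $\rho'(\gamma_1)$ are hyperbolic elements of $\SL(2,\RR)$, each having exactly two invariant lines in $\RR^2$; so if $M$ (resp.\ $M'$) had a line $L\subset\RR^2$ invariant under all loop words, then $L$ would belong to a finite list of candidate eigenlines. Here I would use the flexibility of the horseshoe: between $\Sigma_1$ and $\Sigma_2$ there are infinitely many transverse heteroclinic orbits whose homotopy classes realize arbitrary elements of $\Gamma$ (one can replace the preferred connection by another that winds around any prescribed closed geodesic of $X$). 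Such a modification replaces $M_{1,2}$ by $\rho(\gamma)M_{1,2}$ and $M'_{1,2}$ by $\rho'(\gamma)M'_{1,2}$ for a chosen $\gamma\in\Gamma$. Since $\rho(\Gamma)$ and $\rho'(\Gamma)$ preserve no line in $\RR^2$, the set of $\gamma\in\Gamma$ that keeps some candidate line invariant under either representation is contained in a proper algebraic subvariety; picking $\gamma$ outside this set simultaneously makes $M$ and $M'$ irreducible. The main obstacle here will be verifying that inserting such an arbitrary large loop into one transition segment is compatible with the Markov construction of the first step, so that the resulting invariant set is still a hyperbolic horseshoe coded by the full shift on three symbols; this is a standard but delicate bookkeeping argument in uniformly hyperbolic dynamics.
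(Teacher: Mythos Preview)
Your construction of the horseshoe $K$, the suspension conjugacy $h\colon\Sigma_r\to K$, and the reading of the restricted cocycles as one-step locally constant via the flat (suspension-of-representation) structure of $E,E'$ is essentially the paper's argument and is correct.

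For the irreducibility clause you take a different route from the paper, and your justification has a gap. You modify a single heteroclinic connection (from $1$ to $2$), which simultaneously changes $M_{1,2}$ and $M'_{1,2}$, and then assert that a suitable $\gamma\in\Gamma$ exists because the bad set ``is contained in a proper algebraic subvariety''. But the bad conditions are of the form $\gamma\in\rho^{-1}(\stab L)$ or $\gamma\in\rho'^{-1}(\stab L')$ for the eigenlines of $M_{1,1},M'_{1,1}$; these are pullbacks of Borel subgroups by two a priori unrelated homomorphisms, and irreducibility of $\rho,\rho'$ only tells you each pullback is a \emph{proper} subgroup of the discrete group $\Gamma$. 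A dimension count in $\SL(2,\RR)$ does not show that their union is still proper; for that you would need something like B.~H.~Neumann's covering lemma together with a case analysis of which of these subgroups can have finite index. As written, the simultaneous choice of $\gamma$ is not established.

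The paper sidesteps this entirely by decoupling the two cocycles across two \emph{different} transition arcs: it modifies the heteroclinic $z_{1,2}$ so that $M_{1,1}$ and $M_{2,1}M_{1,2}$ share no invariant line (forcing $M$ irreducible), and \emph{independently} modifies $z_{1,3}$ so that $M'_{1,1}$ and $M'_{3,1}M'_{1,3}$ share no invariant line (forcing $M'$ irreducible). Each step then involves only one representation and is carried out by an explicit case analysis --- precomposing $M_{1,2}$ by words of the form $A^\ell M_{1,1}^m B^n$ with $A=\rho(\gamma_u)$, $B=\rho(\gamma_s)$ chosen (via irreducibility of $\rho$ alone) not to preserve $E^u$, $E^s$ respectively --- with no joint genericity needed. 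This use of the third symbol to separate the two problems is the idea your argument is missing.
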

\begin{proof}
The shadowing lemma provides us with an invariant hyperbolic set $K$ contained in a small neighborhood $U$ of
the orbits of the periodic points $z_i$ and of heteroclinic points $z_{j,k}$.
Fixing pieces of orbits $\zeta_{j,k}$ connecting a neighborhood of $z_j$ to a neighborhood of $z_k$,
the set $K$ is the union of orbits that can be split as pieces of orbits close to
the orbits of $z_i$ and transition arcs $\zeta_{j,k}$. This decomposition defines a coding over the alphabet
$\{1,2,3\}$, implying the three first items of the lemma.

By pulling back by $h$ the cocycles $(E,A)$ and $(E,A')$, one defines cocycles $(\mathcal{E},\mathcal{A})$,  $(\mathcal{E}',\mathcal{A}')$ on $(\Sigma_r,\sigma_r)$.
Since $(E,A)$ and $(E,A')$ are the suspensions of representations $\rho,\rho'\colon \Gamma\to \SL(2,\RR)$,
the cocycles $(\mathcal E,\mathcal A)$, $(\mathcal{E}',\mathcal{A}')$ are the suspensions of locally constant cocycles $M,M'$.
Having chosen the neighborhood $U$ small (hence the pieces of orbits $\zeta_{j,k}$ long)
the sets $h([i],0)$ have small diameters and the cocycles $M,M'$ are one-step locally-constant, giving the last item.
\medskip

Let us assume that $\rho,\rho'$ are irreducible and $\lambda(z_1),\lambda'(z_1)\neq 0$,
and let us prove
the final statement.
We claim that one can modify the heteroclinic points $z_{1,2}$
in such a way that there does not exist a linear $1$-space $L\subset \RR^2$
preserved by the matrices $M_{1,1}$ and
$M_{2,1}M_{1,2}$. This will imply that $M$ is irreducible.
In a similar way, one can modify independently $z_{1,3}$
in such a way that there is no $1$-space preserved by $M'_{1,1}$ and
$M'_{3,1}M'_{1,3}$. This will imply that $M'$ is irreducible.

Since $\lambda(z_1)\neq 0$, the matrix $M_{1,1}$ is hyperbolic and preserves two directions
$E^u,E^s$. One will select two elements $\gamma_u,\gamma_s\in \Gamma$
and modify the heteroclinic orbit $\zeta_{1,2}$ by an orbit that shadows $\ell$ times $\gamma_u$,
then $m$ times the orbit of $z_1$, 
then $n$ times $\gamma_s$, and finally the original orbit $\zeta_{1,2}$.
In this ways, one modifies the matrix $M_{1,2}$ by precomposing by
a product $A^\ell M_{1,1}^m B^n$. Since $\rho$ is irreducible, one can choose $A$ that does not preserves $E^u$
and $B$ that does not preserves $E^s$.

Up to replace $A$ by $A^2$, the matrix $M_{2,1}M_{1,2}A$ does not preserve $E^u$.
If it does not preserve $E^s$ either, one replaces $M_{1,2}$ by $M_{1,2}A$ as explained and gets the required property.

If $M_{2,1}M_{1,2}A E^s=E^s$,
the matrices $M_{2,1}M_{1,2}AM_{1,1}^\ell B$ do not preserve $E^s$.
Since $M_{2,1}M_{1,2}A$ does not preserve $E^u$, the directions $M_{2,1}M_{1,2}A E^u$ and $M_{2,1}M_{1,2}A M_{1,1} E^u$
are different. Hence one can choose $\ell=0$ or $1$, and ensure that
$M_{2,1}M_{1,2}A M_{1,1}^\ell B$ does not preserve $E^u$.
In this case, one thus replaces $M_{1,2}$ by $M_{1,2}A B$ or $M_{1,2}AM_{1,1}B$ and get the property.

This concludes the proof of the irreducibility of the cocycles $M,M'$.
\end{proof}

In the next sections, we construct new suspended shifts $\Sigma_{\widetilde r}$
with cocycles $\widetilde {\mathcal{A}},\widetilde {\mathcal{A}'}$ that are isomorphic
to the cocycles $\mathcal A,\mathcal A'$ over invariant compact sets of $\Sigma_r$.
They are obtained by selecting periodic orbits and heteroclinic points in $\Sigma_r$,
in the same way as for proving \Cref{lm:1}.

\begin{Lemma}
\label{lm:1.b}
For any periodic points $z_1,z_2,z_3\in \Sigma_r$ with disjoint orbits, there exist:
\begin{itemize}
\item a continuous function $\widetilde r\colon \Sigma=\{1,2,3\}^\ZZ \to (0,\infty)$,
\item a conjugacy $h\colon \Sigma_{\widetilde r}\hookrightarrow \Sigma_r$ between $\Sigma_{\widetilde r}$ and
a subset $K$ of $\Sigma_r$
such that $h(\overline i,0)=z_i$,
\item one-step locally constant cocycles $\widetilde M,\widetilde M'$,
defining cocycles $(\widetilde{\mathcal{E}},\widetilde{\mathcal{A}})$,  $(\widetilde{\mathcal{E}}',\widetilde{\mathcal{A}}')$ on $(\Sigma_{\widetilde r},\sigma_{\widetilde r})$,
that are conjugated to the cocycles $(\mathcal{E},\mathcal{A})$,
$(\mathcal{E}',\mathcal{A}')$ on $(K,\sigma_{\widetilde r})$,
\end{itemize}
\end{Lemma}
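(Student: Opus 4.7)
The plan is to mimic the proof of Lemma~\ref{lm:1} but directly within the symbolic suspension $(\Sigma_r,\sigma_r)$. All the hyperbolic arguments there (shadowing, construction of heteroclinic arcs, coding) reduce here to elementary combinatorics on finite words in $\{1,2,3\}$, since $\Sigma$ is the full shift.

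First I would translate the data: each periodic point $z_i\in \Sigma_r$ has some period $T_i>0$ under $\sigma_r$, so it corresponds to a $\sigma$-periodic sequence $\overline{w_i}\in\Sigma$ for a finite word $w_i$ over $\{1,2,3\}$, together with a height $s_i$ in its fiber. The disjointness of the three orbits translates into the $\overline{w_i}$ being pairwise non-$\sigma$-equivalent. Since $\Sigma$ is a full shift, words can be concatenated freely, so for each ordered pair $(j,k)$ with $j\neq k$ one picks a finite transition word $\zeta_{j,k}$ such that the concatenation $\overline{w_j}\cdot \zeta_{j,k}\cdot \overline{w_k}$ is admissible and codes a heteroclinic orbit of $\sigma_r$ between the orbits of $z_j$ and $z_k$; one sets $\zeta_{i,i}$ to be the empty word.

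Next I would build the new coding. Define a \emph{unit block} by $B_{i,j}\coloneqq w_i\cdot \zeta_{i,j}$, and for any $\underline\varepsilon\in\Sigma$ form the bi-infinite admissible sequence obtained by concatenating $\ldots B_{\varepsilon_{-1},\varepsilon_0}B_{\varepsilon_0,\varepsilon_1}B_{\varepsilon_1,\varepsilon_2}\ldots$. Taking $\widetilde r(\underline\varepsilon)$ to be the total $r$-length of the block $B_{\varepsilon_0,\varepsilon_1}$ -- which depends only on the pair $(\varepsilon_0,\varepsilon_1)$ -- and sending $(\underline\varepsilon,0)$ to the point of $\Sigma_r$ lying at the beginning of block $B_{\varepsilon_0,\varepsilon_1}$ at a prescribed phase within its fiber, one obtains a continuous injection $h\colon\Sigma_{\widetilde r}\to\Sigma_r$ that intertwines $\sigma_{\widetilde r}$ with $\sigma_r$; a suitable cyclic rotation of each $w_i$ together with a matching adjustment of the height of the base point ensures $h(\overline i,0)=z_i$. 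Denote by $K$ its image.

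Finally, since $M,M'$ are one-step locally constant on $\Sigma$, the matrices $M_w$ and $M'_w$ along any prescribed finite word $w$ are fixed, so setting $\widetilde M_{i,j}\coloneqq M_{B_{i,j}}$ and $\widetilde M'_{i,j}\coloneqq M'_{B_{i,j}}$ defines two one-step locally constant cocycles on $\Sigma$. By construction their suspensions over $\widetilde r$ are conjugated via $h$ to the restrictions of $(\mathcal E,\mathcal A)$ and $(\mathcal E',\mathcal A')$ to $K$. The only delicate point is bookkeeping: arranging the coding so that $\widetilde r$ and both cocycle values depend only on the pair $(\varepsilon_0,\varepsilon_1)$ requires fixing once and for all the phase of the base point within each block, but once this normalization is made there is nothing more to verify.
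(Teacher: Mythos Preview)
Your approach is essentially the one the paper intends: it states just before Lemma~\ref{lm:1.b} that the extracted suspended shifts ``are obtained by selecting periodic orbits and heteroclinic points in $\Sigma_r$, in the same way as for proving Lemma~\ref{lm:1}'', and gives no separate proof. Your explicit block construction $B_{i,j}=w_i\cdot\zeta_{i,j}$ is exactly the right way to make this precise in the full-shift setting.

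One inaccuracy to flag: you assert that $\widetilde r(\underline\varepsilon)$, defined as the total $r$-length of the block $B_{\varepsilon_0,\varepsilon_1}$ inside the concatenated sequence, depends only on the pair $(\varepsilon_0,\varepsilon_1)$. This is not true in general, since $r\colon\Sigma\to(0,\infty)$ is merely continuous, so each term $r(\sigma^k(\underline\eta))$ in the sum depends on the full concatenated sequence $\underline\eta$, hence on all of $\underline\varepsilon$. Fortunately this does not matter: the lemma only asks for $\widetilde r$ to be continuous, and your formula visibly gives a continuous function of $\underline\varepsilon$. By contrast, your claim \emph{is} correct for the cocycle values, since $M,M'$ are one-step locally constant: the product $M_{B_{i,j}\cdot(\text{first letter of }w_j)}$ depends only on $(i,j)$, which is what makes $\widetilde M,\widetilde M'$ one-step locally constant as required. (Your written formula $\widetilde M_{i,j}=M_{B_{i,j}}$ is off by the last transition matrix into the next block, but you rightly file this under bookkeeping.)
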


\subsection{Approximation by hyperbolic periodic orbits}
One can upgrade \Cref{p.per} and require that the periodic orbits are hyperbolic.

\begin{Proposition}\label{p.hyperbolic-periodic}
Let $M,M'$ be one-step locally constant cocycles on $\Sigma$ such that $\lambda(\overline 1)\neq 0$.
For any $i\in\{1,2,3\}$, there exists a periodic orbit in $\Sigma_r$ such that the Lyapunov
exponents for $\mathcal{A},\mathcal{A}'$ are arbitrarily close to the exponents $\lambda(\overline i),\lambda'(\overline i)$
and the exponent for $\mathcal{A}$ is non-vanishing.
\end{Proposition}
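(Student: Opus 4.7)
The plan is to study periodic orbits of the form $\overline{1^N i^K}$, whose cocycle matrix over one period is
$$
C_{N,K} = M_{i,1}\, M_{i,i}^{K-1}\, M_{1,i}\, M_{1,1}^{N-1},
$$
with return time $T_{N,K} = N r(\overline 1) + K r(\overline i) + O(1)$. If $\lambda(\overline i) \neq 0$, the orbit $\overline i$ itself already satisfies the conclusion, so I focus on the case $\lambda(\overline i) = 0$. Then $M_{i,i} \in \SL(2,\RR)$ has unit modulus eigenvalues (elliptic, parabolic, or $\pm\mathrm{Id}$), and $\|M_{i,i}^K\|$ grows at most polynomially in $K$.

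With $\lambda_1 = e^{r(\overline 1)\lambda(\overline 1)} > 1$, the bound $\log \|C_{N,K}\| \leq (N-1)\log \lambda_1 + O(\log K)$ shows that the $\mathcal{A}$-exponent of $\overline{1^N i^K}$ tends to $0 = \lambda(\overline i)$ whenever $N = o(K)$ and $K \to \infty$. For the cocycle $\mathcal{A}'$: if $\lambda'(\overline i) > 0$, the $(K-1)$-th power of the hyperbolic matrix $M'_{i,i}$ dominates $\tr(C'_{N,K})$ with a coefficient that is non-zero for generic $N$, so the $\mathcal{A}'$-exponent converges to $\lambda'(\overline i)$; if $\lambda'(\overline i) = 0$, the same polynomial estimate gives convergence to $0$.

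The crucial step is to arrange hyperbolicity of $C_{N,K}$, that is $|\tr(C_{N,K})| > 2$. Let $E^u, E^s$ denote the eigendirections of $M_{1,1}$, giving the spectral decomposition $M_{1,1}^{N-1} = \lambda_1^{N-1} P^u + \lambda_1^{-(N-1)} P^s$. Setting $D_K = M_{i,1} M_{i,i}^{K-1} M_{1,i}$, one obtains
$$
\tr(C_{N,K}) = \lambda_1^{N-1}\, \alpha_K + \lambda_1^{-(N-1)}\, \beta_K,
$$
where $\alpha_K$ is the $(u,u)$-entry of $D_K$ in the basis $(E^u, E^s)$ and $\beta_K = O(\mathrm{poly}(K))$. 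A direct case analysis on $M_{i,i}$ shows that $\alpha_K$ is bounded away from $0$ for arbitrarily large $K$, except in the degenerate situation where $M_{1,i}(E^u)$ is an $M_{i,i}$-eigendirection whose image under $M_{i,1}$ lies in $E^s$. Picking such a $K$ and then $N \asymp \log K$, large enough to absorb the correction $\lambda_1^{-(N-1)}|\beta_K|$ but still $o(K)$, makes $|\tr(C_{N,K})| > 2$, so $C_{N,K}$ is hyperbolic and the $\mathcal{A}$-exponent is strictly positive while remaining close to $0$.

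The main obstacle is the degenerate configuration in which $\alpha_K$ vanishes for every $K$. I would bypass it by replacing the orbit with $\overline{1^N j^L i^K}$ for $j \in \{2,3\}\setminus\{i\}$: the additional block $M_{j,i}\, M_{j,j}^{L-1}\, M_{1,j}$ in the new matrix provides enough freedom that the analogous degeneracy cannot hold simultaneously for both choices of $j$ and all $(L,K)$. The preceding quantitative estimates transfer verbatim, concluding the construction.
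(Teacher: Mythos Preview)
Your approach via explicit trace estimates is different from the paper's, which works instead with invariant cones: fixing a small $\ell$ and taking $n$ large, the paper shows (case by case according to whether $M_{i,i}$ is elliptic or parabolic, and where the parabolic eigendirection sits relative to $E^s,E^u$) that the return map sends a cone around $E^u$ into itself, hence is hyperbolic. The cone method is more geometric and avoids having to track whether a particular matrix coefficient vanishes.

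Your non-degenerate analysis is fine, but the handling of the degenerate configuration has a genuine gap. First, there is only \emph{one} symbol $j\in\{1,2,3\}\setminus\{1,i\}$, so the phrase ``both choices of $j$'' is empty, and nothing in the hypotheses prevents the analogous coefficient for $\overline{1^N j^L i^K}$ from vanishing identically in $(L,K)$ as well (e.g.\ when $M_{1,j}E^u$ is an $M_{j,j}$-eigendirection sent by $M_{j,i}$ to the $M_{i,i}$-eigendirection, which $M_{i,1}$ sends to $E^s$). Second, you overlooked a simpler way out: in the parabolic degenerate case your $\beta_K$ grows linearly in $K$, so for \emph{fixed} small $N$ one already has $|\tr C_{N,K}|\to\infty$, giving hyperbolicity with $\mathcal{A}$-exponent $\asymp(\log K)/K\to 0$ --- no third symbol is needed. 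The only genuinely stubborn case is $M_{i,i}=\pm\mathrm{Id}$ with $D=M_{i,1}M_{1,i}$ satisfying $D_{11}=0$ and $|D_{22}|\le 2$; there one may instead use words $\overline{1^{N}i^{K}1^{N'}i^{K'}}$ with $N\neq N'$, whose trace equals $-\lambda_1^{N-N'}-\lambda_1^{N'-N}+O(\lambda_1^{-(N+N'-2)})$, and take $K,K'\gg N,N'$. Finally, ``generic $N$'' in your $\mathcal{A}'$ estimate needs to be reconciled with the constraint $N\asymp\log K$ you impose for $\mathcal{A}$; this is easy (at most one bad $N$ per $K$) but should be said.
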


We first prove the property for a single cocycle.
\begin{Lemma}\label{l.hyperbolic-periodic}
Let $M$ be a one-step locally constant cocycle on $\Sigma$ such that $\lambda(\overline 1)\neq 0$.
For any $i\in\{1,2,3\}$, there exists a periodic orbit in $\Sigma_r$ whose Lyapunov
exponent for $\mathcal{A}$ is arbitrarily close to the exponent $\lambda(\overline i)$
and is non-vanishing.
\end{Lemma}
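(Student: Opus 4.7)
The case $\lambda(\overline i)\neq 0$ (which in particular covers $i=1$) is trivial: the periodic orbit $\overline i$ itself has non-vanishing Lyapunov exponent equal to $\lambda(\overline i)$. So I assume throughout that $i\neq 1$ and $\lambda(\overline i)=0$. The strategy is to construct periodic orbits that spend most of their time in the symbol $i$ (which forces the Lyapunov exponent to be small) while containing enough occurrences of the symbol $1$ to inherit the hyperbolicity of $M_{1,1}$ and guarantee that the monodromy is hyperbolic.

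A natural first attempt is the family $\overline{i^N 1^m}$, whose one-period monodromy is
\[
M_w = M_{1,i}\,M_{1,1}^{m-1}\,M_{i,1}\,M_{i,i}^{N-1}.
\]
Since $\rho(M_{i,i})=1$, the iterate $M_{i,i}^{N-1}$ grows at most linearly in $N$, so for fixed $m$ one has $\log\rho(M_w)=O(1)+O(\log N)$ while the continuous period $T_w$ is of order $N$. Hence $\lambda_w=\log\rho(M_w)/T_w\to 0$. It remains to ensure hyperbolicity of $M_w$. Diagonalising $M_{1,1}^{m-1}=\beta^{m-1}P_u+\beta^{-(m-1)}P_s$ in the eigenbasis $(v^u,v^s)$ of $M_{1,1}$ (with leading eigenvalue $\beta>1$), and setting $D_N\coloneqq M_{i,1}M_{i,i}^{N-1}M_{1,i}$, cyclicity of the trace yields
\[
\tr(M_w) = \beta^{m-1}(D_N)_{uu}+\beta^{-(m-1)}(D_N)_{ss}.
\]
If $(D_N)_{uu}\neq 0$ for some $N$, choosing $m$ large makes $|\tr(M_w)|>2$ and hence $M_w$ hyperbolic, with $\log\rho(M_w)\sim m\log\beta$; fixing such $m$ and letting $N\to\infty$ along values for which $(D_N)_{uu}\neq 0$ drives $\lambda_w$ to $0$ while keeping it strictly positive. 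That such $N$ exist follows from a short case analysis on $M_{i,i}$ (elliptic, parabolic, or $\pm\mathrm{Id}$): the map $N\mapsto(D_N)_{uu}$ is respectively a non-trivial trigonometric polynomial, an affine function, or a constant, and it cannot be identically zero unless one is in the fully degenerate configuration addressed below.

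The main obstacle is the super-degenerate case where $M_{i,1}M_{i,i}^{N-1}M_{1,i}$ sends $v^u$ into $\mathbb{R}v^s$ for every $N$ (for instance when $M_{i,i}=\pm\mathrm{Id}$ and $M_{i,1}M_{1,i}\,v^u\in\mathbb{R}v^s$). To handle it I would replace $\overline{i^N 1^m}$ by orbits of the form $\overline{i^{N}\,1^{m}\,i^{N'}\,1^{m'}}$ and expand $\tr(M_w)$ using the spectral decompositions of $M_{1,1}^{m-1}$ and $M_{1,1}^{m'-1}$ simultaneously. Writing $T\coloneqq V^{-1}M_{i,1}M_{1,i}V$ in the eigenbasis of $M_{1,1}$, the degeneracy enforces $T_{uu}=0$, and combined with $\det T=1$ this yields $T_{us}T_{su}=-1$. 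A direct computation then gives, in the simplest case $N=N'=1$,
\[
\tr(M_w) = -\bigl(\beta^{m-m'}+\beta^{m'-m}\bigr) + T_{ss}^{\,2}\,\beta^{-(m+m'-2)}.
\]
For any $m\neq m'$ with $m+m'$ sufficiently large the absolute value of this trace exceeds $2$, producing hyperbolicity with $\log\rho(M_w)$ bounded of order $|m-m'|\log\beta$ while $T_w\to\infty$ as $m+m'\to\infty$. Parameterising by $|m-m'|=1$ and $m+m'\to\infty$ gives the desired sequence of periodic orbits with arbitrarily small non-vanishing Lyapunov exponent, concluding the proof.
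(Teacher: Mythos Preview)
Your approach is correct and genuinely different from the paper's. The paper argues projectively: in the elliptic case it uses recurrence of rotations to find large $n$ with $M_{2,2}^{n}M_{1,1}^{\ell}\,\overline{\mathcal C^u}\subset\mathcal C^u$ for a small cone $\mathcal C^u$ around $E^u$; in the parabolic case it does a similar cone-inclusion argument based on the position of the parabolic invariant direction $E^c$ relative to $E^s,E^u$. These cone arguments certify hyperbolicity directly and never encounter your degenerate configuration. Your trace computation is more explicit and, notably, keeps track of the transition matrices $M_{i,1},M_{1,i}$ (which the paper's writeup suppresses); that is precisely what produces the degenerate case $T_{uu}=0$, and your two-block trick $\overline{i\,1^{m}\,i\,1^{m'}}$ with $|m-m'|=1$ resolves it neatly---the relation $T_{us}T_{su}=-1$ makes the two $E^u\!\to\!E^s$ swaps cancel up to a bounded factor, so $\log\rho(M_w)$ stays bounded while the period goes to infinity.

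One imprecision to tighten: in the non-degenerate case you write ``letting $N\to\infty$ along values for which $(D_N)_{uu}\neq 0$'', but to keep $|\tr(M_w)|>2$ for a \emph{fixed} $m$ you need $|(D_N)_{uu}|$ bounded \emph{away from zero} along the subsequence, not merely nonzero. Your own case analysis supplies this: in the elliptic case $(D_N)_{uu}=C\cos\bigl((N-1)\theta-\phi_0\bigr)$ is almost periodic and hence $\geq |C|/2$ infinitely often; in the parabolic case it is affine in $N$, so either unbounded or a nonzero constant; in the $\pm\mathrm{Id}$ case it has constant absolute value. With this adjustment (and the harmless replacement of $\beta>1$ by $|\beta|>1$ to allow negative eigenvalues of $M_{1,1}$) the argument is complete.
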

\begin{proof} Without loss of generality, one may consider the case $i=2$ and assume that the spectral radius of
$M_{2,2}$ vanishes ($\lambda(\overline 2)=0$.) We will choose $\ell\geq 2$ and take $n$ large, so that the exponent of the periodic orbit $\gamma$ in $\Sigma_r$
which follows the itinerary $1^\ell2^n$ in $\Sigma$ is arbitrarily close to $0$.

Let us first assume that $M_{2,2}$ is conjugated to a rotation.
The integer $\ell$ is chosen arbitrarily.
Let $\mathcal{C}^u$ be a small open neighborhood of the unstable space $E^u$ of $M_{1,1}$
such that $M_{1,1}\overline{\mathcal{C}^u}\subset \mathcal{C}^u$.
Since the rotations are recurrent,
there exists $n$ large such that $M_{2,2}^nM_{1,1}^\ell\overline{\mathcal{C}^u}\subset \mathcal{C}^u$.
This implies that the matrix $M_{2,2}^nM_{1,1}^\ell$ is hyperbolic.
Hence the Lyapunov exponent for $\mathcal A$ of the periodic orbit $\gamma$ is non-vanishing.

Let us now suppose that $M_{2,2}$ is parabolic (and not the identity).
It admits a unique invariant $1$-space $E^c$.
If $E^c$ coincides with one of the invariant spaces $E^s$ or $E^u$ of $M_{1,1}$,
the spectral radius of $M_{2,2}^nM_{1,1}^\ell$ coincides with the spectral radius of $M_{1,1}^\ell$,
hence the Lyapunov exponent for $\mathcal A$ of the periodic orbit $\gamma$ is non-vanishing.

If $M_{2,2}$ is parabolic and $E^c$ does not coincides with $E^s$ or $E^u$,
one fixes an open set $\mathcal C\subset P^1(\RR)$ which is the complement of a small neighborhood of
$E^s$ and chooses $\ell\geq 1$ large so that
$M_{1,1}^\ell \mathcal C$ is a small neighborhood of $E^u$.
Taking $n$ large enough, $M_{2,2}^nM_{1,1}^\ell \overline{\mathcal C}$ is close to $E^c$, hence contained in $\mathcal C$.
This implies that the matrix $M_{2,2}^nM_{1,1}^\ell$ is hyperbolic and
that the Lyapunov exponent for $\mathcal A$ of the periodic orbit $\gamma$ is non-vanishing.
\end{proof}

\begin{proof}[Proof of \Cref{p.hyperbolic-periodic}]
We continue the proof of \Cref{l.hyperbolic-periodic}, still assuming $i=2$ and $\lambda(\overline 2)=0$:
we get a periodic orbit $\gamma$ which follows the itinerary $1^\ell2^n$ with $n$ large.
As before the Lyapunov exponent for $\mathcal A$ is close to $\lambda(\overline 2)$ and is non vanishing,
so it remains to control the Lyapunov exponent for $\mathcal A'$.

Let us first consider the case $\lambda'(\overline 2)=0$. Since $n$ is large, the Lyapunov exponent of $\gamma$ for $\mathcal A'$ is close to $\lambda'(\overline 2)=0$; so the proposition holds.

In the case $\lambda'(\overline 2)\neq 0$,
let ${E^s}'$ and ${E^u}'$ be the stable and unstable spaces of $M'_{2,2}$.
Regarding the itinerary $1^\ell2^n$,
note that the integer $\ell$ may be chosen so that $(M'_{1,1})^\ell$ does not send ${E^u}'$ on ${E^s}'$.
Let $\mathcal{C}'$ be a small open neighborhood of ${E^u}'$, whose closure is disjoint from ${E^s}'$.
Provided that $n$ is larger than some $n_0\geq 1$, we thus have
$(M'_{2,2})^n(M'_{1,1})^\ell\overline{\mathcal{C}'}\subset \mathcal{C}'$. The matrix $(M'_{2,2})^n(M'_{1,1})^\ell$ is hyperbolic
and the difference between its spectral radius differs and the spectral radius of $(M'_{2,2})^n$ is bounded uniformly in $n$.
This implies that the Lyapunov exponent of $\gamma$ for $\mathcal A'$ is close to $\lambda'(\overline 2)$ and
the proposition holds in this case also.
\end{proof}

\subsection{Hyperbolic cocycles}

A cocycle $M$ on $\Sigma$
is \emph{uniformly hyperbolic} if  for each $\underline \varepsilon\in\Sigma$, there exists a decomposition
$\RR^2=E^s(\underline \varepsilon)\oplus E^u(\underline \varepsilon)$
which
\begin{itemize}
\item depends continuously on $\underline \varepsilon$,
\item is invariant, \ie, $M^1(\underline \varepsilon).E^{s}(\underline \varepsilon)=E^{s}(\sigma(\underline \varepsilon))$
and $M^1(\underline \varepsilon).E^{u}(\underline \varepsilon)=E^{u}(\sigma(\underline \varepsilon))$,
\item is contracted/expanded, \ie, $\|M^n|_{E^s}\|\leq 1/2$ and $\|M^{-n}|_{E^u}\|\leq 1/2$ for some $n\geq 1$.
\end{itemize}

The hyperbolicity can be checked with the following \emph{cone criterion}.

\begin{Proposition}[{\cite[Theorem~2.3]{Avila-Bochi-Yoccoz}}]\label{p.ABY}
A one-step locally constant cocycle $M$ on $\SL(2,\RR)$ is uniformly hyperbolic
if and only if there exist non-empty open sets $\mathcal C_1,\mathcal C_2,\mathcal C_3\subsetneq P^1(\RR)$
satisfying $M_{i,j}(\overline{\mathcal C_i})\subset \mathcal C_j$ for any $i,j\in \{1,2,3\}$.
\end{Proposition}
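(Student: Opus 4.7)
The plan is to prove both directions of this classical cone equivalence. Throughout, set $\mathcal{C}_i^* \coloneqq P^1(\RR) \setminus \overline{\mathcal{C}_i}$; by bijectivity of $\SL(2,\RR)$ acting on $P^1(\RR)$, the hypothesis $M_{i,j}(\overline{\mathcal{C}_i}) \subset \mathcal{C}_j$ is equivalent to the dual inclusion $M_{i,j}^{-1}(\overline{\mathcal{C}_j^*}) \subset \mathcal{C}_i^*$, which gives us a backward-invariant cone field that will furnish the stable direction.

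For the sufficient direction (cones $\Rightarrow$ hyperbolicity), for $\underline\varepsilon \in \Sigma$ I would define
\[E^u(\underline\varepsilon) \coloneqq \bigcap_{n\geq 1} M_{\varepsilon_{-1},\varepsilon_0}\cdots M_{\varepsilon_{-n},\varepsilon_{-n+1}}\bigl(\overline{\mathcal{C}_{\varepsilon_{-n}}}\bigr) \subset \overline{\mathcal{C}_{\varepsilon_0}},\]
and symmetrically $E^s(\underline\varepsilon) \subset \overline{\mathcal{C}_{\varepsilon_0}^*}$ from the dual cones. The key tool is that any map in $\SL(2,\RR)$ sending a closed projective arc strictly inside a smaller open arc is a strict contraction for the Hilbert metric on the larger arc. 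Finiteness of the family $\{M_{i,j}\}_{i,j\in\{1,2,3\}}$ together with the strict inclusion hypothesis yields, by compactness, uniform constants $N\geq 1$ and $\lambda<1$ such that every word $w$ of length $N$ contracts the Hilbert metric by at least $\lambda$ on the appropriate cone. This forces each intersection to reduce to a single point depending continuously on $\underline\varepsilon$, and produces the invariant splitting. The exponential contraction along $E^s$ and expansion along $E^u$ are then obtained by combining the uniform Hilbert-metric contraction with $\det M^n = 1$: decomposing $v = v^s + v^u$, the invariance of $|\det(v^s,v^u)|$ together with the projective angle-closure pins down the required norm rates.

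For the necessary direction (hyperbolicity $\Rightarrow$ cones), assume the splitting $\RR^2 = E^s(\underline\varepsilon) \oplus E^u(\underline\varepsilon)$ provided by the definition and set
\[K_i^u \coloneqq \overline{\{E^u(\underline\varepsilon) : \varepsilon_0 = i\}}, \qquad K_i^s \coloneqq \overline{\{E^s(\underline\varepsilon) : \varepsilon_0 = i\}}.\]
These are compact subsets of $P^1(\RR)$, disjoint by continuity of the splitting and compactness of the cylinder $[i]$, and invariance of the splitting under the cocycle gives $M_{i,j}(K_i^u) \subset K_j^u$. I would choose $\mathcal{C}_i$ as a small open neighborhood of $K_i^u$ whose closure avoids $K_i^s$; the uniform expansion on $E^u$ and the uniform angle-bound between $E^u$ and $E^s$ then allow one to shrink the neighborhoods so that $M_{i,j}(\overline{\mathcal{C}_i}) \subset \mathcal{C}_j$ for all nine pairs.

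The main obstacle is this necessary direction: because $E^u(\underline\varepsilon)$ on the cylinder $[i]$ depends on the entire past, the set $K_i^u$ is typically a Cantor-like subset of $P^1(\RR)$, and a naive open thickening need not be strictly mapped into itself after a single step $M_{i,j}$. The standard workaround, which I would adopt, is to first pass to a large iterate $M^N$ and define the tentative cones as open neighborhoods of the (still compact but thicker) sets $\bigcup_w M_w(K_{w_0}^u)$ over long words $w$ of length $N$ ending in $i$; uniform hyperbolicity guarantees these neighborhoods are open, proper, and strictly nested under $M^N$, after which a Markov-refinement bookkeeping (exploiting that the cocycle is one-step locally constant on the alphabet $\{1,2,3\}$) is needed to recover one-step cones for the original cocycle. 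This last reduction step is the delicate combinatorial heart of the argument.
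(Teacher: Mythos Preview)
The paper does not prove this proposition at all: it is stated with a citation to \cite[Theorem~2.3]{Avila-Bochi-Yoccoz} and used as a black box (only the easy ``cones $\Rightarrow$ hyperbolic'' direction is actually invoked, in the proof of \Cref{l.extract-hyperbolic}). So there is no in-paper proof to compare against, and for the purposes of this article a citation is all that is required.

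That said, your sketch is the standard cone-field argument and is essentially on the right track. Two comments. First, in the sufficient direction you phrase the Hilbert-metric step for a ``closed projective arc strictly inside a smaller open arc'', which tacitly assumes each $\mathcal C_i$ is connected; the statement allows arbitrary proper open subsets of $P^1(\RR)$ (multicones), and the contraction argument then has to be run component-by-component with a uniform bound over the finitely many transition maps. This is harmless for the paper's application, since there the $\mathcal C_i$ are chosen as small arcs around the unstable directions $E^u_i$, but it is a genuine gap relative to the stated generality. Second, for the necessary direction you correctly isolate the real difficulty (the sets $K_i^u$ can be Cantor-like, so a naive thickening need not be one-step forward-invariant), and your proposed fix via a high iterate followed by a ``Markov-refinement bookkeeping'' is in the right spirit but left entirely unspecified; this is exactly the part that requires work in \cite{Avila-Bochi-Yoccoz}, and your outline does not yet constitute a proof of it.
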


We then explain how to extract a uniformly hyperbolic subsystem.

\begin{Lemma}\label{l.extract-hyperbolic}
Let $\Sigma_r$ be a suspended shift and
let $M,M'$ be irreducible one-step locally constant cocycles on $\Sigma$
such that the $3$ fixed points $\overline i\in \Sigma$ are hyperbolic for each cocycle $M,M'$.

Then, there exists an extracted suspended shift $h\colon \Sigma_{\widetilde r}\hookrightarrow \Sigma_r$
which satisfies $h(\overline i,0)=(\overline i,0)$ for each $i\in\{1,2,3\}$ and two uniformly hyperbolic one-step locally constant
cocycles $\widetilde M, \widetilde M'$ on $\Sigma$ whose suspensions on $\Sigma_{\widetilde r}$
are conjugated to the suspensions on $M,M'$ over $h(\Sigma_{\widetilde r})$.
\end{Lemma}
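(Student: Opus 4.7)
Our strategy is to apply the cone criterion of Proposition~\ref{p.ABY} to a cocycle obtained via extraction as in Lemma~\ref{lm:1.b}. Since the hypothesis ensures that each $M_{i,i}$ (resp.\ $M'_{i,i}$) is hyperbolic, it has well-defined unstable and stable directions $E^u_i, E^s_i \in P^1(\RR)$ (resp.\ $F^u_i, F^s_i$). Pick small open neighborhoods $\mathcal{C}_i$ of $E^u_i$ with $\overline{\mathcal{C}_i}$ disjoint from $E^s_i$ and $M_{i,i}(\overline{\mathcal{C}_i}) \subset \mathcal{C}_i$; define analogously the $M'$-cones $\mathcal{C}'_i$ around $F^u_i$. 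These are our candidate cones for the extracted cocycles.

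We use the flexibility in the construction of Lemma~\ref{lm:1.b} to design the extraction. Applied to periodic points $z_i = (\overline{i},0)$, this flexibility allows us to prescribe, for each pair $(i,j)$, a word $\widetilde{w}_{i,j}$ in $\Sigma$ starting with $i$ and ending with $j$ that codes the heteroclinic segment from $(\overline{i},0)$ to $(\overline{j},0)$ in the extracted set; the resulting one-step locally constant cocycles then have $\widetilde{M}_{i,j} = M_{\widetilde{w}_{i,j}}$ and $\widetilde{M}'_{i,j} = M'_{\widetilde{w}_{i,j}}$. We take
\[
\widetilde{w}_{i,j} = i^{N}\, u_{i,j}\, j^{N} \quad (i\neq j), \qquad \widetilde{w}_{i,i} = i^{N},
\]
for $N$ a large integer and $u_{i,j}$ a \emph{bridge word} (starting with $i$, ending with $j$) selected so that
\[
M_{u_{i,j}}(E^u_i) \neq E^s_j \quad\text{and}\quad M'_{u_{i,j}}(F^u_i) \neq F^s_j.
\]
The existence of such a $u_{i,j}$ is the core of the argument and relies on the irreducibility of both $M$ and $M'$: if every word $u\colon i\to j$ satisfied $M_{u}(E^u_i) = E^s_j$, then for any $w\colon j\to j$ the composition would force $M_w(E^s_j) = E^s_j$, producing a common invariant line for all such $M_w$ and contradicting the irreducibility of $M$. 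A parallel statement holds for $M'$. Combining this with the freedom to prepend/append loops at $\overline{j}$ or to insert the third symbol produces a single word $u_{i,j}$ that satisfies both transversality conditions simultaneously.

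Once the bridges are fixed, the cone criterion follows directly: for $i\neq j$ and $N$ large, $M_{i,i}^{N}$ compresses $\overline{\mathcal{C}_i}$ into an arbitrarily small neighborhood of $E^u_i$; the bridge $M_{u_{i,j}}$ maps this to a small neighborhood of $M_{u_{i,j}}(E^u_i)$, which stays at positive distance from $E^s_j$; and $M_{j,j}^{N}$ then contracts this image toward $E^u_j \subset \mathcal{C}_j$. For $i=j$ the matrix $\widetilde{M}_{i,i}$ is a power of $M_{i,i}$ and thus preserves $\mathcal{C}_i$ by construction. The identical verification works for $M'$ with the cones $\mathcal{C}'_i$. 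Proposition~\ref{p.ABY} then gives the uniform hyperbolicity of both $\widetilde{M}$ and $\widetilde{M}'$, as required. The main obstacle is precisely the simultaneous choice of the bridge words $u_{i,j}$ valid for both cocycles, which is where the independent irreducibility of the representations $\rho, \rho'$ is essential; the remainder is a standard contraction/expansion argument using hyperbolicity of $M_{i,i}$ and $M'_{i,i}$.
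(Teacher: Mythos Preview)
Your strategy is essentially the paper's: find transition words via irreducibility, pad with high powers at the endpoint, and invoke the cone criterion of Proposition~\ref{p.ABY} through Lemma~\ref{lm:1.b}. The only cosmetic difference is that you buffer on both sides ($i^N u_{i,j} j^N$) while the paper buffers only at the target ($w(i,j)\,j^\ell$); your extra $i^N$ prefix is harmless but unnecessary, since the cone $\mathcal C_i$ is already a small neighborhood of $E^u_i$.

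You correctly isolate the crux---a \emph{single} word $u_{i,j}$ must satisfy the transversality condition for both cocycles---but you do not prove it. The sentence ``combining this with the freedom to prepend/append loops at $\overline j$ or to insert the third symbol produces a single word $u_{i,j}$\dots'' is a gesture, not an argument: prepending $i$'s or appending $j$'s leaves $M_u(E^u_i)$ and $M'_u(F^u_i)$ unchanged (since $E^u_i,F^u_i$ are fixed by $M_{i,i},M'_{i,i}$ and $E^s_j,F^s_j$ by $M_{j,j},M'_{j,j}$), so buffering cannot repair a bad bridge. What is actually needed is to start from $u_0$ with $M_{u_0}(E^u_i)\neq E^s_j$, and if $M'_{u_0}(F^u_i)=F^s_j$, postcompose by a $j$-loop $w$ with $M'_w(F^s_j)\neq F^s_j$ (which exists by $M'$-irreducibility), then check that the $M$-condition survives, correcting by a further case analysis if it does not. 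The paper carries out exactly this kind of case analysis in its Claim (for a single cocycle, with the two conditions $M_{w(i,j)}E^s_i\neq E^u_j$ and $M_{w(i,j)}E^u_i\neq E^s_j$), and is itself terse---``the same construction applies to $M'$''---about why the resulting $w(i,j)$ then works for both cocycles. So your gap is the paper's gap, but since you explicitly flag this as ``the main obstacle,'' you should supply the missing argument rather than defer it.
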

\begin{proof}
Let $\RR^2=E^s_i\oplus E^u_i$ be the hyperbolic decomposition over the fixed point $\overline i$.
\begin{NoNumberClaim} For every $i\neq j$, there exists a finite word $w(i,j)$ starting with $i$ and ending with $j$
such that $M_{w(i,j)} E^s_i\neq E^u_j$ and $M_{w(i,j)} E^u_i\neq E^s_j$.
\end{NoNumberClaim}
\begin{proof}[Proof of the claim]
One may assume that $M_{i,j} E^s_i= E^u_j$ or $M_{i,j} E^u_i= E^s_j$, otherwise we are done.
We will
consider the first case (the second one is analogous).
Since $M$ is irreducible, there is a word $w$ which starts and ends with $i$ satisfying
$M_{w}E^s_i\neq E^s_i$. Note also that $M^2_{w}E^s_i\neq E^s_i$
and $M^2_{w}E^s_i\neq M_{w}E^s_i$. In particular we have $M_{i,j}M_w E^s_i\neq E^u_j$ and
$M_{i,j}M_w^2 E^s_i\neq E^u_j$.

If $M_{i,j}M_wE^u_i\neq E^s_j$ or $M_{i,j}M_w^2E^u_i\neq E^s_j$ we are done with $w_{i,j}=wj$ or $w_{i,j}=wwj$.
Otherwise $M_wE^u_i=E^u_i$ and $M_{i,j} E^u_i=E^s_j$.
We choose another word $w'$ which starts and ends with $i$ satisfying $M_{w'}E^u_i\neq E^u_i$.
Hence we have $M_{i,j}M_{w'}E^u_i\neq E^s_i$ and $M_{i,j}M_{w'w}E^u_i\neq E^s_i$.

If $M_{i,j}M_{w'} E^s_i\neq E^u_j$ or $M_{i,j}M_{w'w} E^s_i\neq E^u_j$, we are done
with $w_{i,j}=w'j$ or $w_{i,j}=w'w'j$.
Otherwise $M_{w'}E^s_i=E^s_i$ so that
$M_{i,j}M_{w'w} E^s_i\neq E^u_j$ and $M_{i,j}M_{w'w'w} E^s_i\neq E^u_j$.
Since $M_{w'}E^u_i\neq E^u_i$, the images of $E^u_i$ by $M_{i,j}M_{w'w}$
and $M_{i,j}M_{w'w'w}$ are different, so we are done
with the word $w_{i,j}= w'wj$ or $w_{i,j}= w'w'wj$.
\end{proof}

Let $\mathcal{C}_1,\mathcal{C}_2,\mathcal{C}_3\subset P^1(\RR)$ be small open neighborhoods of
$E^u_1,E^u_2,E^u_3$ such that  $M_{i,i} \overline{\mathcal{C}_i}\subset \mathcal{C}_i$
and $E^s_j\not\in M_{w(i,j)} \overline{\mathcal{C}_i}$.
Choosing any $\ell\geq 1$ large, $M_{j,j}^\ell$ contracts $M_{w(i,j)} \overline{\mathcal{C}_i}$
near the unstable direction $E^u_j$, so that
$M_{j,j}^\ell M_{w(i,j)} \overline{\mathcal{C}_i}\subset \mathcal{C}_j$.
The same construction applies to the cocycle $M'$.

One extracts a suspended subshift $h\colon \Sigma_{\widetilde r}\to \Sigma_r$
with the same periodic points $h(\overline i,0)=(\overline i, 0)$, but whose
transition between different symbols $i,j$ follows the itinerary $w(i,j)\;j^\ell$ from the original coding.
By \Cref{lm:1.b}, one gets new cocycles $(\widetilde{\mathcal{E}},\widetilde{\mathcal{A}})$,  $(\widetilde{\mathcal{E}}',\widetilde{\mathcal{A}}')$ over $\Sigma_{\widetilde r}$ which are associated to
one-step locally constant cocycles $\widetilde M,\widetilde M'$ over $\Sigma$.
By construction $\widetilde M_{i,i}=M_{i,i}$, whereas $\widetilde M_{i,j}=M_{j,j}^\ell M_{w(i,j)}$.
Consequently the cone criterion in \Cref{p.ABY} holds for $\widetilde M$
and the sets $\mathcal{C}_i$, proving that $\widetilde M$ is uniformly hyperbolic. The same holds for $\widetilde M'$.
\end{proof}

\subsection{Proof of~\Cref{t.main2}}
As we explained in at the beginning of \Cref{sec:connectedness} we will consider the main case, where the set $\Lambda\coloneqq\{(\lambda(\mu),\lambda'(\mu)), \mu\in \mathcal{M}\}$ is not contained in a line.

\begin{Proposition}\label{p.per2}
$\Lambda^h_{per}=\{(\lambda(\gamma),\lambda'(\gamma)),\; \gamma \text{ periodic orbit}
\text{ with } \lambda(\gamma),\lambda'(\gamma)$$\neq 0\}$ is dense in~$\Lambda$.
\end{Proposition}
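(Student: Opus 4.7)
The aim is to strengthen \Cref{p.per} by producing periodic orbits whose Lyapunov exponents are non-vanishing for \emph{both} cocycles, upgrading \Cref{p.hyperbolic-periodic} which guarantees this only for $\mathcal A$. The key observation is that since $\Lambda$ is not contained in a line (this is the hypothesis of \hyperlink{case2}{Case~2}), neither coordinate projection of $\Lambda$ onto $\RR$ is identically zero: otherwise $\Lambda$ would lie on a coordinate axis. Consequently there exist measures $\mu_-,\mu_+\in \mathcal{M}$ with $\lambda(\mu_-)\neq 0$ and $\lambda'(\mu_+)\neq 0$, and by \Cref{p.per} one finds periodic orbits $p_-,p_+\subset X$ with the same non-vanishing exponents.

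Given a target $\mu\in \mathcal{M}$ with Lyapunov pair $(\lambda,\lambda')$, \Cref{p.per} provides a periodic orbit $z_1$ whose pair approximates $(\lambda,\lambda')$. Setting $z_2=p_-$, $z_3=p_+$ (perturbed slightly if needed so that $z_1,z_2,z_3$ have pairwise disjoint orbits) and invoking \Cref{lm:1}, one obtains a suspended shift $\Sigma_r$ with one-step locally constant cocycles $M,M'$ whose fixed points $\overline 1,\overline 2,\overline 3$ correspond to $z_1,z_2,z_3$; in particular $M_{2,2}$ and $M'_{3,3}$ are hyperbolic matrices. I then consider the periodic orbit $\gamma_n\subset \Sigma_r$ with symbolic itinerary $w_n = 1^n\,2^\ell\,3^{\ell'}$, taking $n\to\infty$ with $\ell,\ell'\geq 1$ fixed. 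Since the blocks $2^\ell$ and $3^{\ell'}$ contribute bounded distortion over an unbounded period, the Lyapunov exponents of $\gamma_n$ for $\mathcal A$ and $\mathcal A'$ converge to those of $\overline 1$, hence to $(\lambda(z_1),\lambda'(z_1))$, which approximates the target.

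To ensure both Lyapunov exponents of $\gamma_n$ are non-zero, the cone argument of \Cref{l.hyperbolic-periodic} is applied twice. Fix an open cone $\mathcal C\subset P^1(\RR)$ around the unstable direction of $M_{2,2}$ and disjoint from its stable direction; for $\ell$ large, $M_{2,2}^\ell$ contracts every cone disjoint from its stable direction strictly inside $\mathcal C$. A case analysis on $M_{1,1}$ (rotation, parabolic, or hyperbolic) as in the proof of \Cref{l.hyperbolic-periodic}, combined with the bounded effect of $M_{3,3}^{\ell'}$, then shows that the full monodromy $M_{w_n}$ maps $\overline{\mathcal C}$ strictly inside $\mathcal C$, and is therefore hyperbolic. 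The same reasoning applied to $M'$ with a cone $\mathcal C'$ around the unstable direction of $M'_{3,3}$ yields non-vanishing of the $\mathcal A'$-exponent. The main technical difficulty is the compatibility of the two cone conditions: the $3^{\ell'}$ block lies between $2^\ell$ and the next copy of $1^n$, so its action must not destroy the cone positioning achieved for $M$, and symmetrically for $M'$. This is resolved by enlarging $\ell$ (resp.\ $\ell'$) sufficiently to absorb the bounded distortion produced by $M_{3,3}^{\ell'}$ (resp.\ $M_{2,2}^\ell$); any remaining non-generic alignments of invariant directions are broken by slightly modifying the heteroclinic connections provided by \Cref{lm:1}, exactly as in the proof of \Cref{l.extract-hyperbolic}.
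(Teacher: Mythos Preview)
Your overall setup---use the \hyperlink{case2}{Case 2} hypothesis to produce periodic orbits $p_-,p_+$ hyperbolic for $M,M'$ respectively, then embed them together with a periodic approximant of the target into a symbolic system via \Cref{lm:1}---coincides with the paper's. The execution diverges from there.

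The paper does \emph{not} construct a three-block itinerary. It simply invokes \Cref{p.hyperbolic-periodic} \emph{twice in succession}. In the symbolic system with fixed points corresponding to $\gamma_0,\gamma'_0,\gamma$ (where $\lambda(\gamma_0)\neq 0$, $\lambda'(\gamma'_0)\neq 0$, and $\gamma$ is the target), a first application yields a periodic orbit $\gamma_1$ with both exponents close to those of $\gamma$ and with $\lambda(\gamma_1)\neq 0$. A second application---with the roles of the two cocycles swapped, anchor $\gamma'_0$, target $\gamma_1$---yields $\bar\gamma$ with $\lambda'(\bar\gamma)\neq 0$ and both exponents close to those of $\gamma_1$; since ``nonzero'' is an open condition and $\lambda(\bar\gamma)$ is close to $\lambda(\gamma_1)>0$, one gets $\lambda(\bar\gamma)\neq 0$ automatically. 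No simultaneous cone argument is ever needed.

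Your single-step construction has a concrete problem as written. You fix $\mathcal C$ to be a \emph{small} cone around $E^u(M_{2,2})$ and claim the full monodromy of $1^n 2^\ell 3^{\ell'}$ sends $\overline{\mathcal C}$ into $\mathcal C$. But after $M_{2,2}^{\ell}$ contracts into $\mathcal C$, the remaining factor $M_{1,2}\, M_{1,1}^{n-1}\, M_{3,1}\, M_{3,3}^{\ell'-1}\, M_{2,3}$ is, even along a recurrence subsequence for $M_{1,1}$, a fixed \emph{nontrivial} matrix with no reason to preserve a small neighbourhood of $E^u(M_{2,2})$. (In \Cref{l.hyperbolic-periodic} the analogous factor reduces to something close to the identity---that is precisely why the small-cone argument works there.) The natural repair is to take $\mathcal C$ large, namely the complement of a neighbourhood of $E^s(M_{2,2})$, and verify that the remaining factor does not send $E^u(M_{2,2})$ onto $E^s(M_{2,2})$; this introduces further non-degeneracy conditions depending on $\ell'$ and on the case analysis for $M_{1,1}$, and one must also check that the subsequences of $n$ needed for the $M$-argument and for the $M'$-argument can be chosen compatibly. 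Your ``enlarge $\ell$ to absorb $M_{3,3}^{\ell'}$, enlarge $\ell'$ to absorb $M'_{2,2}{}^\ell$'' is then at risk of circularity. All of this can very likely be pushed through with enough care, but the paper's iterative trick sidesteps the entire difficulty.
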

\begin{proof}
By \Cref{p.per}, it is enough to prove that $\Lambda^h_{per}$ is dense in $\Lambda_{per}$.
Moreover there exists three periodic orbits $\gamma_i$ in $X$ whose
pairs of exponents $(\lambda(\gamma_i),\lambda'(\gamma_i))$ are not contained in a common line.
This implies that there exists  two periodic orbits $\gamma_0,\gamma'_0$ such that
$\lambda(\gamma_0)\neq 0$ and $\lambda'(\gamma'_0)\neq 0$. For any periodic orbit $\gamma\subset X$,
one applies \Cref{lm:1} 
and build a symbolic system $h\colon \Sigma_r\to X$ such that
the periodic points $h(\overline i,0)$ lift points in each orbits $\gamma_0,\gamma'_0,\gamma$.
\Cref{p.hyperbolic-periodic} applied twice then ensures that there exist a periodic orbit
$\bar \gamma\subset h(\Sigma_r)$ whose Lyapunov exponents $\lambda(\bar\gamma), \lambda'(\bar \gamma)$
are non vanishing and close to $\lambda(\gamma),\lambda'(\gamma)$ as required.
\end{proof}

\Cref{t.main2} is now a consequence of the following.

\begin{Proposition}
If $\rho,\rho'$ are irreducible,
then for any periodic orbits $\gamma_1,\gamma_2,\gamma_3\subset X$
satisfying $\lambda(\gamma_i),\lambda'(\gamma_i)\neq 0$
we have
$$\bigg\{\sum a_i\cdot(\lambda(\gamma_i),\lambda'(\gamma_i)),\; a_1+a_2+a_3=1,\; a_i\geq 0\bigg\}\subset \Lambda.$$
\end{Proposition}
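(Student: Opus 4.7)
The plan is to reduce to a uniformly hyperbolic symbolic setting where the Lyapunov exponent becomes an integral of a continuous function against the invariant measure, and then to realize every convex combination via a carefully constructed family of ergodic (periodic) measures.

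First, I would apply \Cref{lm:1} to the three periodic orbits $\gamma_1,\gamma_2,\gamma_3$. Since $\rho,\rho'$ are irreducible and each $\lambda(\gamma_i),\lambda'(\gamma_i)$ is non-zero, the final clause of \Cref{lm:1} provides a coding $h\colon \Sigma_r\to X$ (with $h(\overline i,0)\in \gamma_i$) together with irreducible one-step locally constant cocycles $M,M'$ on $\Sigma=\{1,2,3\}^{\ZZ}$ whose suspensions are conjugate to $(E,A),(E,A')$. Then I would invoke \Cref{l.extract-hyperbolic}: since the three fixed points $\overline i$ are hyperbolic for both cocycles (their exponents equal $\lambda(\gamma_i),\lambda'(\gamma_i)\ne 0$), we obtain an extracted suspended shift $\Sigma_{\widetilde r}$ with fixed points still corresponding to the $\gamma_i$, carrying uniformly hyperbolic one-step locally constant cocycles $\widetilde M,\widetilde M'$.

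Second, on this uniformly hyperbolic system the continuous splittings $\RR^2=E^s(\underline\varepsilon)\oplus E^u(\underline\varepsilon)$ (and similarly for $\widetilde M'$) exist, and the Lyapunov exponents of any shift-invariant measure $\nu$ are given by the integrals
\[
\widetilde\lambda(\nu)=\int_\Sigma \log\|\widetilde M^1(\underline\varepsilon)|_{E^u(\underline\varepsilon)}\|\,d\nu,\qquad
\widetilde\lambda'(\nu)=\int_\Sigma \log\|\widetilde{M'}^1(\underline\varepsilon)|_{E^{u'}(\underline\varepsilon)}\|\,d\nu.
\]
Both integrands are continuous, so these functionals are linear in $\nu$; the corresponding suspension exponents are $\widetilde\lambda(\nu)/\int \widetilde r\,d\nu$ and $\widetilde\lambda'(\nu)/\int \widetilde r\,d\nu$. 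In particular, for the Dirac measure at $\overline i$ one recovers $\lambda(\gamma_i)=\log\rho(\widetilde M_{ii})/\widetilde r_i$ and $\lambda'(\gamma_i)=\log\rho(\widetilde M'_{ii})/\widetilde r_i$, where $\widetilde r_i\coloneqq\widetilde r(\overline i)$.

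Third, given $(a_1,a_2,a_3)$ with $\sum a_i=1$ and $a_i\ge 0$, I would construct a sequence of ergodic (periodic) measures realizing in the limit the prescribed convex combination. Choose integer sequences $N_{i,n}\to\infty$ with
\[
\frac{N_{i,n}\,\widetilde r_i}{\sum_j N_{j,n}\,\widetilde r_j}\longrightarrow a_i,
\]
and let $\mu_n$ be the invariant measure supported on the periodic orbit of $\Sigma_{\widetilde r}$ with itinerary $\overline{1^{N_{1,n}}2^{N_{2,n}}3^{N_{3,n}}}$. By uniform hyperbolicity, the product $\widetilde M_{\varepsilon_0\cdots\varepsilon_{N-1}}$ along one period is dominated by $\prod_i \widetilde M_{ii}^{N_{i,n}}$ up to a bounded (in fact multiplicatively bounded) factor, so its spectral radius has log equal to $\sum_i N_{i,n}\log\rho(\widetilde M_{ii})+o(\sum_j N_{j,n})$. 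Dividing by the flow period $\sum_i N_{i,n}\widetilde r_i+O(1)$ yields
\[
\lambda(\mu_n)\longrightarrow \sum_i a_i\,\lambda(\gamma_i),\qquad \lambda'(\mu_n)\longrightarrow \sum_i a_i\,\lambda'(\gamma_i).
\]
Carrying this back through the conjugacies of \Cref{lm:1} and \Cref{l.extract-hyperbolic}, this produces a sequence of hyperbolic periodic orbits in $X$ whose pairs of Lyapunov exponents converge to the prescribed convex combination, hence this combination lies in $\overline\Lambda$. Combined with \Cref{p.per2} (which says $\Lambda^h_{per}$ is dense in $\Lambda$), this is what is needed for \Cref{t.main2}.

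The main obstacle is controlling the exponents along the finite concatenation: one must show that the non-commuting transition blocks $\widetilde M_{ij}$ contribute only a bounded multiplicative error to the spectral radius of the period matrix. This is where uniform hyperbolicity (the cone criterion of \Cref{p.ABY} established via \Cref{l.extract-hyperbolic}) is essential: the transition matrices map the forward-invariant cones $\mathcal C_i$ into $\mathcal C_j$, so they preserve the cone structure, and the expansion rate along the concatenated product is controlled by the expansion rates $\log\rho(\widetilde M_{ii})$ weighted by the block lengths $N_{i,n}$. A parallel argument handles $\widetilde M'$ simultaneously, using the same itineraries, since the cones and hyperbolic estimates for the two cocycles are independent.
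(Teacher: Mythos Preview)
Your reduction to a uniformly hyperbolic symbolic system via \Cref{lm:1} and \Cref{l.extract-hyperbolic} is exactly the route the paper takes, and your control of the exponents along the concatenated periodic words $\overline{1^{N_{1,n}}2^{N_{2,n}}3^{N_{3,n}}}$ is correct: by uniform hyperbolicity the unstable bundle is continuous, the log-expansion $\underline\varepsilon\mapsto\log\|\widetilde M^1(\underline\varepsilon)|_{E^u}\|$ is continuous, and the periodic measures converge weak-$\ast$ to the appropriate combination of Dirac masses, which gives the claimed convergence of exponents.

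The gap is in the final conclusion. Your construction produces a \emph{sequence} of periodic measures whose exponent pairs \emph{converge} to the prescribed convex combination; you yourself write that the combination lies in $\overline\Lambda$. But the proposition asserts containment in $\Lambda$, not in $\overline\Lambda$, and $\Lambda$ has no reason to be closed (limits of ergodic measures need not be ergodic, and Lyapunov exponents are not continuous in the measure). This matters for the statement of \Cref{t.main2}: one wants the \emph{interior of $\Lambda$} (not of $\overline\Lambda$) to be convex and dense in $\Lambda$, and your argument does not put any non-vertex point of the simplex into $\Lambda$ itself.

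The paper closes this gap with a different final step (\Cref{p.symbolic}). Instead of approximating by periodic orbits, it sets
\[
\psi(\underline\varepsilon)=\log\|\widetilde M^1(\underline\varepsilon)|_{E^u}\|-\lambda\,\widetilde r(\underline\varepsilon),\qquad
\psi'(\underline\varepsilon)=\log\|\widetilde{M'}^1(\underline\varepsilon)|_{E^{u'}}\|-\lambda'\,\widetilde r(\underline\varepsilon),
\]
with $(\lambda,\lambda')=\sum_i a_i(\lambda(\gamma_i),\lambda'(\gamma_i))$, notes that a suitable convex combination of the values $\psi(\overline i),\psi'(\overline i)$ vanishes, and then invokes the weak-convexity result \Cref{t.convexity} (in the appendix) to produce a single \emph{ergodic} shift-invariant measure $m$ with $\int\psi\,dm=\int\psi'\,dm=0$ exactly. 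Suspending $m$ gives an ergodic flow-invariant measure whose exponents are precisely $(\lambda,\lambda')$, hence $(\lambda,\lambda')\in\Lambda$. Your argument can be salvaged by replacing the limiting step with this appeal to \Cref{t.convexity}.
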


From \Cref{lm:1} and \Cref{l.extract-hyperbolic},
it reduces to the next statement.

\begin{Lemma}\label{p.symbolic}
Let $\Sigma_r$ be a suspension of the shift $\Sigma=\{1,2,3\}^\ZZ$
and let $(\mathcal{E},\mathcal{A}),(\mathcal{E}',\mathcal{A}')$
be cocycles associated to uniformly hyperbolic one-step locally constant cocycles $M,M'$.
Let $\lambda_i,\lambda'_i$ be the Lyapunov exponents
for $\mathcal{A},\mathcal{A}'$ of the periodic orbits
associated to the fixed points $\overline i\in \Sigma$.

Then, for any $a_1,a_2,a_3\in [0,1]$ with $a_1+a_2+a_3=1$,
there is an ergodic invariant probability measure $\mu$ on $\Sigma_r$
whose Lyapunov exponents for $\mathcal{A},\mathcal{A}'$ are
$\lambda=\sum a_i\lambda_i$ and $\lambda'=\sum a_i\lambda'_i$.
\end{Lemma}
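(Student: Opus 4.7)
I would realize the target $(v,v')\coloneqq(\sum a_i\lambda_i,\sum a_i\lambda'_i)$ as the Lyapunov spectrum of an ergodic equilibrium state on the mixing full shift $\Sigma$.

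\emph{Step 1: reduction to the base shift.} Uniform hyperbolicity of $M,M'$ provides H\"older continuous invariant unstable direction fields on $\Sigma$, yielding H\"older potentials $\psi_M(\underline\varepsilon)\coloneqq\log\lVert M^1(\underline\varepsilon)|_{E^u(\underline\varepsilon)}\rVert$ (and similarly $\psi_{M'}$). By Oseledets, for any ergodic shift-invariant $\mu$, $\int\psi_M\,\D\mu$ is the $M$-Lyapunov exponent; the suspended measure $\bar\mu$ on $\Sigma_r$ has $\mathcal A$-Lyapunov exponent $\int\psi_M\,\D\mu/\int r\,\D\mu$, analogously for $\mathcal A'$, and at the fixed points $\lambda_i=\psi_M(\overline i)/r(\overline i)$, $\lambda'_i=\psi_{M'}(\overline i)/r(\overline i)$.

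\emph{Step 2: target as a Birkhoff-average equation.} Set $\phi\coloneqq\psi_M-v\,r$ and $\phi'\coloneqq\psi_{M'}-v'\,r$. The prescribed exponents of $\bar\mu$ are realized precisely when $\int\phi\,\D\mu=\int\phi'\,\D\mu=0$. Taking weights $b_i\propto a_i/r(\overline i)$ in the simplex, a direct computation shows that the invariant (non-ergodic) measure $\mu^\star\coloneqq\sum b_i\delta_{\overline i}$ satisfies $\int\phi\,\D\mu^\star=\int\phi'\,\D\mu^\star=0$, so $(0,0)$ lies in the compact convex set $C\coloneqq\{(\int\phi\,\D\mu,\int\phi'\,\D\mu)\mid\mu\text{ shift-invariant on }\Sigma\}\subset\RR^2$.

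\emph{Step 3: ergodic realization.} Since $\Sigma=\{1,2,3\}^\ZZ$ is a mixing subshift of finite type and $\phi,\phi'$ are H\"older, Ruelle's thermodynamic formalism produces, for each $(s,s')\in\RR^2$, a unique Bernoulli equilibrium state $\mu_{s,s'}$ for $s\phi+s'\phi'$; the pressure-gradient $(s,s')\mapsto(\int\phi\,\D\mu_{s,s'},\int\phi'\,\D\mu_{s,s'})$ sweeps out the relative interior of $C$. Whenever $(0,0)\in\operatorname{relint}(C)$, the corresponding $\mu_{s,s'}$ is the required ergodic measure $\mu$. Boundary points of the simplex (some $a_i=0$) are treated by replacing $\Sigma$ with the mixing subshift on the remaining symbols and iterating the same argument; in the degenerate case where $\phi$ or $\phi'$ is cohomologous to a constant, the corresponding $\lambda_i$ (resp.\ $\lambda'_i$) all coincide and the target is attained trivially by any ergodic measure.

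\emph{Main obstacle.} The decisive non-trivial input is surjectivity of the pressure-gradient onto $\operatorname{relint}(C)$, and the prerequisite that $(0,0)\in\operatorname{relint}(C)$: this demands enough cohomological non-triviality of $\phi,\phi'$, which holds in generic position, and in degenerate regimes reduces to a lower-dimensional version of the same argument. A possible alternative, avoiding the full weight of thermodynamic formalism, would be a Bowen--Sigmund specification construction producing an ergodic measure with Birkhoff averages of $\phi,\phi'$ both equal to $0$ directly; this too hinges on an openness statement analogous to $(0,0)\in\operatorname{relint}(C)$.
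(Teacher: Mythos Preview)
Your reduction in Steps~1--2 is exactly the paper's: define $\psi=\psi_M-\lambda r$ and $\psi'=\psi_{M'}-\lambda' r$ (your $\phi,\phi'$), take weights $\alpha_i\propto a_i/r(\overline i)$, and reduce to finding an ergodic $m$ on $\Sigma$ with $\int\psi\,dm=\int\psi'\,dm=0$.

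Where you and the paper diverge is Step~3. The paper uses precisely what you call the ``possible alternative'': a Sigmund-type specification construction, stated and proved as a separate appendix result (\Cref{t.convexity}). That theorem says that for any transitive SFT, any finite list of continuous functions $\varphi_1,\dots,\varphi_d$, any periodic orbits $O_1,\dots,O_\ell$ and any convex weights $\alpha_k$, there is an ergodic measure $m$ with $\int\varphi_i\,dm=\sum_k\alpha_k\int\varphi_i\,d\nu(O_k)$ for every $i$. Applied with $\varphi_1=\psi$, $\varphi_2=\psi'$, $O_k=\{\overline k\}$ and the weights $\alpha_k$ above, this gives the ergodic $m$ on the nose.

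The key point you should take away is that this specification route does \emph{not} hinge on any openness statement of the form $(0,0)\in\operatorname{relint}(C)$. Your last sentence is mistaken on this: the Sigmund construction hits the exact convex combination of the periodic data, regardless of where that point sits inside the full rotation set $C$; only continuity of $\psi,\psi'$ is used, not H\"older regularity, and no interior condition enters. By contrast, your primary route via equilibrium states genuinely does need $(0,0)\in\operatorname{relint}(C)$, and your handling of the degenerate cases is incomplete: beyond ``$\phi$ cohomologous to a constant'' there are intermediate degeneracies (e.g.\ $\phi,\phi'$ cohomologous up to an affine relation, so that $C$ is a segment with $(0,0)$ possibly at an endpoint) which are not covered by passing to a sub-alphabet or by the trivial case. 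These can be patched, but the bookkeeping is heavier than the direct specification argument, which is why the paper chooses the latter.
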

\begin{proof}
The unstable bundles $E^u,{E^u}'$ for $M$ and $M'$ are continuous.
One can thus define the continuous functions
$$\psi(\underline \varepsilon)=\log \|M_{\varepsilon_0,\varepsilon_1}|_{E^u(\underline \varepsilon)}\|-\lambda {r(\underline \varepsilon)},\quad
\psi'(\underline \varepsilon)=\log \|M'_{\varepsilon_0,\varepsilon_1}|_{{E^u}'(\underline \varepsilon)}\|-\lambda' {r(\underline \varepsilon)}.$$
On fixed points, they take the values $\psi(\overline i)=(\lambda_i-\lambda ){r(\overline i)}$ and
$\psi'(\overline i)=(\lambda'_i-\lambda' ){r(\overline i)}.$ Let us set
$$\alpha_i=\tfrac{a_i}{r(\overline i)}\cdot \left[\tfrac{a(\overline 1)}{r(\overline 1)}+\tfrac{a(\overline 2)}{r(\overline 2)}+\tfrac{a(\overline 3)}{r(\overline 3)}\right]^{-1},$$
so that $\sum \alpha_i=1$, $\sum \alpha_i \psi(\overline i)=0$
and $\sum \alpha_i \psi'(\overline i)=0$.
The set of ergodic measures satisfies a weak convexity property, see \Cref{t.convexity}
in the appendix: there exists an ergodic measure $m$ on $\Sigma$ such that
$\int \psi dm=0$ and $\int \psi' dm=0$. In particular by definition of $\psi$, Birkhoff ergodic theorem gives for $m$-almost every $\underline \varepsilon\in \Sigma$:
\begin{equation*}\label{e.lambda}
\lim_n\frac 1 n {\sum_{k=0}^{n-1} \log\|M^1(\sigma^k(\underline \varepsilon))|_{E^u}\|}=\lambda \lim_n\frac 1 n {\sum_{k=0}^{n-1} r(\sigma^k(\underline \varepsilon))}.
\end{equation*}
The measure $m\times \RR$ defines an invariant finite measure on $\Sigma_r$. After normalization,
we denote it by $\mu$.
For $\mu$-almost every point $x$,
the quantity $\tfrac 1 t \log \| \mathcal{A}^t(x)|_{E^u}\|$ converges as $t\to +\infty$ to the Lyapunov exponent $\lambda(\mu)$ of $\mu$ for $\mathcal{A}$.
For $m$-almost every $\underline \varepsilon$ we set $x=(\underline \varepsilon,0)$ and get
$$\lambda=\lim_n\frac{\sum_{k=0}^{n-1} \log\|M^1(\sigma^k(\underline \varepsilon))|_{F}\|}{\sum_{k=0}^{n-1} r(\underline \varepsilon)}=\lim_t  \tfrac 1 t  \log \| \mathcal{A}^t(\underline \varepsilon, 0)|_{F}\|=\lambda(\mu).$$
The same argument shows that the Lyapunov exponent $\lambda'(\mu)$ of the measure $\mu$ for the cocycle $\mathcal{A}'$
coincides with $\lambda'$ as required.
\end{proof}

%************************************************************
%************************************************************
%************************************************************
\section{Translation surfaces}
\label{sec:background}

In this section we review useful results concerning translation surfaces, their moduli spaces, affine invariant submanifolds and the Kontsevich--Zorich cocycle.
For a general introduction to translation surfaces and their moduli spaces, we refer the reader to the surveys~\cite{Forni:Matheus,Zorich:survey}.
%************************************************************
\subsection{Moduli spaces of translation surfaces}

A structure of \emph{translation surface} on a compact oriented topological surface $(M,\Sigma)$ of genus $g$ is the data $(X,\omega)$ where $X$ is a genus $g$ Riemann surface together with a non identically zero holomorphic $1$-form $\omega$ where $\Sigma$ is the zero set of $\omega$. For any integer partition $\kappa$ of $2g-2$, let $\H(\kappa)$ denote the moduli space of nonzero holomorphic $1$-forms $(X,\omega)$ having prescribed zeroes of multiplicities $\kappa$. We also define the Teichm\"uller space $\mathcal{T}(\kappa)$ as the quotient of the set of structure of translation surfaces by the natural action of $\textrm{Diff}_0(M,\Sigma)$. Then, $\H(\kappa)$ is the quotient of $\mathcal{T}(\kappa)$ by the mapping-class group $\mathrm{MCG}(M,\Sigma)\coloneqq \textrm{Diff}^{+}(M,\Sigma) / \textrm{Diff}_0(M,\Sigma)$.

We use the upper script notation $\mathcal{T}^{(1)}(\kappa)$ and $\H^{(1)}(\kappa)$
for the space of unit area translation surfaces (\ie, $i/2\int \omega \wedge \overline{\omega} =1$).

The group $\mathrm{GL}^{+}_2(\RR)$ naturally acts on the set of structures of translation surface by post-composition with the translation charts. This action commutes with the action of
$\textrm{Diff}(M,\Sigma)$. Therefore it induces a well defined action on
both $\mathcal{T}(\kappa)$ and $\H(\kappa)$. We will refer to the action of the $1$-parameter diagonal subgroup
$g_t\coloneqq\diag(e^t,e^{-t})$ as the \emph{Teichm\"uller geodesic flow}.

%************************************************************
\subsection{Affine invariant submanifolds}

Let $\gamma_1, \ldots, \gamma_n$ be any basis of the relative homology group $H_1(X, \Sigma, \ZZ)$,
where $n=2g+s-1$. The period coordinate maps defined by
$$
(X,\omega)\mapsto\left( \int_{\gamma_1} \omega, \ldots, \int_{\gamma_n}\omega \right)
$$
provide $\H(\kappa)$ with an atlas of charts to $\mathbb C^n$ with transition functions in $GL(n, \ZZ)$.
Over each stratum $\H(\kappa)$ there is a flat bundle $H^1$ whose fibers over $(X,\omega)$ are $H^1(X, \Sigma, \CC)$.
An \emph{affine invariant submanifold} of $\H(\kappa)$ is a properly immersed manifold $\mathcal N \hookrightarrow \H(\kappa)$ such that each point of $\mathcal N$ has a neighborhood whose image is given by the set of surfaces in an open set satisfying a set of real linear equations in period coordinates.
It follows directly from the definition that $\mathcal N$ is $\mathrm{GL}_2(\RR)$-invariant.

%************************************************************
\subsection{The Kontsevich--Zorich cocycle}

On the trivial bundle $\mathcal{T}^{(1)}(\kappa) \times H^1(X,\mathbb R)$ we define
a trivial cocycle over $(g_t)$ as usual by
$$
\widehat{G_t}^{KZ}((X,\omega),c)=(g_t(X,\omega),c).
$$
We will often use the notation $H^1_\RR:=H^1(X,\mathbb R)$ for short. The mapping-class group $\mathrm{MCG}(M,\Sigma)$ acts in a non trivial canonical way on both factors of the
product $\mathcal{T}^{(1)}(\kappa) \times H^1(X,\mathbb R)$.
The quotient cocycle $G_t^{KZ}$ is the (reduced) Kontsevich--Zorich cocycle (or KZ cocycle for short). It acts on the real Hodge bundle $H^1_\RR$. 
We will denote by $G_t^{KZ}(X,\omega)$ the linear map from $H^1(X,\mathbb R)$ to $H^1(g_tX,\mathbb R)$.

%************************************************************
\subsection{Lyapunov spectrum of the KZ cocycle}
Let $\Maff$ be a closed $\SL(2,\RR)$-invariant affine manifold.
Given any ergodic $(g_t)$-invariant probability measure $\mu$ with support contained in $\Maff^{(1)}$,
and any choice of norm $\|\cdot\|$ (\eg the Hodge norm), it follows from the work of~\cite{Forni}
that the KZ cocycle is integrable for $\mu$, \ie, $\int \log\|G_{\pm t}^{KZ}\|_{\mathrm{op}} d\mu<\infty$ for all $t\geq 0$.
The multiplicative ergodic theorem of Oseledets guarantees the existence of Lyapunov exponents
$\lambda_1^{\mu}>\dots>\lambda_k^{\mu}$ and a $G_t^{KZ}$-equivariant measurable decomposition $H^1(X,\RR) = E_1(\omega)\oplus\dots\oplus E_k(\omega)$ at $\mu$-almost every $(X,\omega)\in\Maff^{(1)}$ such that
$$
\lim\limits_{t\to\pm\infty} \frac{1}{t}\log\|G^{KZ}_t(X,\omega)v\| = \lambda_i^{\mu} \quad \forall \, \, v\in E_i(\omega)\setminus\{0\}.
$$
In general, we will write the Lyapunov exponent $\lambda_i^{\mu}$ with multiplicity $\dim E_i(\omega)$ in order to
obtain a list of $2g=\dim H^1(X,\RR)$ Lyapunov exponents.
Since $G_t^{KZ}$ is a symplectic cocycle (the action of $\mathrm{MCG}(M,\Sigma)$ on $H^1(X,\RR)$ preserves the natural symplectic intersection form $\{c,c'\}\coloneqq\int_X c\wedge c'$), the Lyapunov exponents are symmetric with respect to $0$.

By definition, $G_t^{KZ}$ acts on the tautological plane $H_{st}^1(X,\omega)\coloneqq\RR.\Re(\omega) \oplus \RR.\Im(\omega)\subset H^1(X,\RR)$ by the matrix $g_t=\diag(e^t, e^{-t})$.
This implies that $\pm 1$ are extremal Lyapunov exponents. Hence the Lyapunov spectrum is
$$
1=\lambda_1^{\mu}\geq\lambda_2^{\mu}\geq\dots\geq\lambda_g^{\mu}\geq-\lambda_g^{\mu}\geq\dots\geq-\lambda_2^{\mu}\geq-\lambda_1^{\mu}=-1.
$$

%************************************************************
\subsection{Computing the KZ cocycle}
\label{sub:sec:computing}

In general, it is difficult to exhibit matrices of the KZ cocycle. However there is a
particular setting which allows to have a nice description: this is the case when the orbit closure of $(X,\omega)$
covers a \emph{Teichm\"uller curve}. One can then describe the KZ cocycle by the cohomological action of
\emph{affine homeomorphisms} of a Veech surface.

More concretely, the automorphism group ${\rm Aut} (X,\omega)$ (resp. the
affine group $\Aff (X,\omega)$) of $(X,\omega)$ is the group of
orientation-preserving homeomorphisms of $X$ which preserve $\Sigma$ and whose restrictions to
$X-\Sigma$ read as translations (resp. affine maps) in the translation charts. The
embedding of ${\rm Aut} (X, \omega)$ into $\Aff (X, \omega)$ is completed into an
exact sequence
$$
1 \longrightarrow {\rm Aut} (X,\omega) \longrightarrow \Aff (X,\omega)
\longrightarrow \SL(X,\omega) \longrightarrow 1 ,
$$
where $\Gamma:=\SL(X,\omega) \subset \SL(2,\RR)$ is the {\it Veech group} of $(X,\omega)$: it is the stabilizer
of $(X,\omega)$ under $\SL(2,\RR)$. The map from $\Aff (X,\omega)$ onto $\Gamma$ is defined by associating to each $\phi\in \Aff (X,\omega)$ its derivative (linear part) $D\phi\in\Gamma$ in translation charts.

When $(X,\omega)$ is a \emph{Veech surface}, that is, its Veech group is a lattice in $\SL(2,\RR)$,
its $\SL(2,\RR)$-orbit $\Maff$ is closed and is naturally isomorphic to $\SL(2,\RR)/\Gamma$.

The Teichm\"uller geodesic flow on $\Maff$ coincides with the geodesic flow on the finite-area hyperbolic surface
$\mathbb H/\Gamma$, that is with the (left) action of the diagonal group of matrices
$\left\{\left(\begin{smallmatrix} e^{t/2} & 0\\ 0 & e^{-t/2} \end{smallmatrix}\right),\; t \in \RR\right\}$
on $\PSL(2,\RR)/\Gamma$.
Moreover the affine group $\Aff(X,\omega)$ embeds naturally in the mapping-class group $\mathrm{MCG}(M,\Sigma)$
and its image is the stabilizer of $\Maff$ in $\mathcal{T}^{(1)}(\kappa)$.
Therefore, the restriction of the KZ cocycle to $\Maff$ is simply the quotient of the trivial cocycle
$$
G^{KZ}_t \colon \PSL(2,\RR)\times \RR^{2g} \to \mathcal \PSL(2,\RR)\times \RR^{2g}
$$
by the affine group, where we have identified $\RR^{2g}$ with $H^1(X,\RR)$.

%************************************************************
\subsection{Lyapunov exponents and pseudo-Anosov maps}
\label{sub:pA}

An affine map $\phi \in \mathrm{Aff}(X,\omega)$ is a pseudo-Anosov map if $D\phi$
is a hyperbolic matrix. In this case, up to conjugacy and up to take the inverse, the derivative map has the form $D\phi = \left(\begin{smallmatrix} \theta_1 & 0\\ 0 & \theta_1^{-1} \end{smallmatrix}\right)$ where $|\theta_1|>1$. This determines a periodic orbit
(not necessarily simple) on $\H^{(1)}(\kappa)$ for $g_t$ of length $T=\log(|\theta_1|)$. By
definition of the KZ cocycle, one has $G^{KZ}_T(X,\omega)=\phi^{\ast}$ where $\phi^\ast$ is acting on $H^1(X,\RR)$.
Thus the eigenvalues $\theta^{\pm 1}_i$, $i=1,\dots,g$, of $\phi^{\ast}$ (where $|\theta_1|>|\theta_2|\geq \dots \geq |\theta_g|$)
are related to the Lyapunov exponents associated to ergodic $(g_t)$-invariant probability measure $\mu$ supported on the periodic orbit by
\begin{equation}
\label{eq:pA}
\lambda^{\mu}_i = \frac{\log(|\theta_i|)}{\log(|\theta_1|)}.
\end{equation}

%************************************************************
%************************************************************
%************************************************************
\section{Diffusion rates and Lyapunov exponents}
\label{sec:le}

The main goal of this section is to relate the diffusion rate of the wind-tree model
to the Lyapunov exponents of the KZ cocycle. See~\cite{Delecroix:Hubert:Lelievre,Delecroix:Zorich} for more details.

%************************************************************
\subsection{From wind-tree model to translation surfaces}
\label{sec:dim}
The following is a summary of the content of \cite[Section~3]{Delecroix:Hubert:Lelievre} needed for our purposes.

Let $a,b\in (0,1)$. The wind-tree model corresponds to a billiard in the plane
endowed with $\ZZ^2$-periodic, horizontally and vertically symmetries.
By the classical unfolding procedure, we can glue a translation surface $(X_\infty,\omega_\infty)$
out of four copies of the original billiard table and unwind billiard trajectories to flat geodesics on $X_\infty$.
The resulting surface $X_\infty$ is $\ZZ^2$-periodic with respect to translations by vectors of the original lattice.
Passing to the $\ZZ^2$-quotient we get a compact flat surface $(X_{a,b},\omega_{a,b}) \in \mathcal H(2,2,2,2)$ of genus~$5$, as in \Cref{f:compact-surface}.
For simplicity, we denote it by $(X,\omega)$.
The $\ZZ^2$ covering $X_\infty$ over $X$ is defined by the Poincaré dual $f \in H^1(X,\ZZ^2)$ of the cycle
\begin{equation}
\label{eq:cocycle}
\begin{pmatrix} v \\ h \end{pmatrix} = \begin{pmatrix} v_{00} + v_{01} - v_{10} - v_{11} \\ h_{00} - h_{01} + h_{10} - h_{11} \end{pmatrix} \in H_1(X,\ZZ^2) \cong H_1(X,\ZZ)^2,
\end{equation}
where the cycles $h_{i,j}$ and $v_{i,j}$, $i,j \in \{0,1\}$, are as in \Cref{f:compact-surface}.

\begin{figure}[hbt]
\includegraphics[scale=.5]{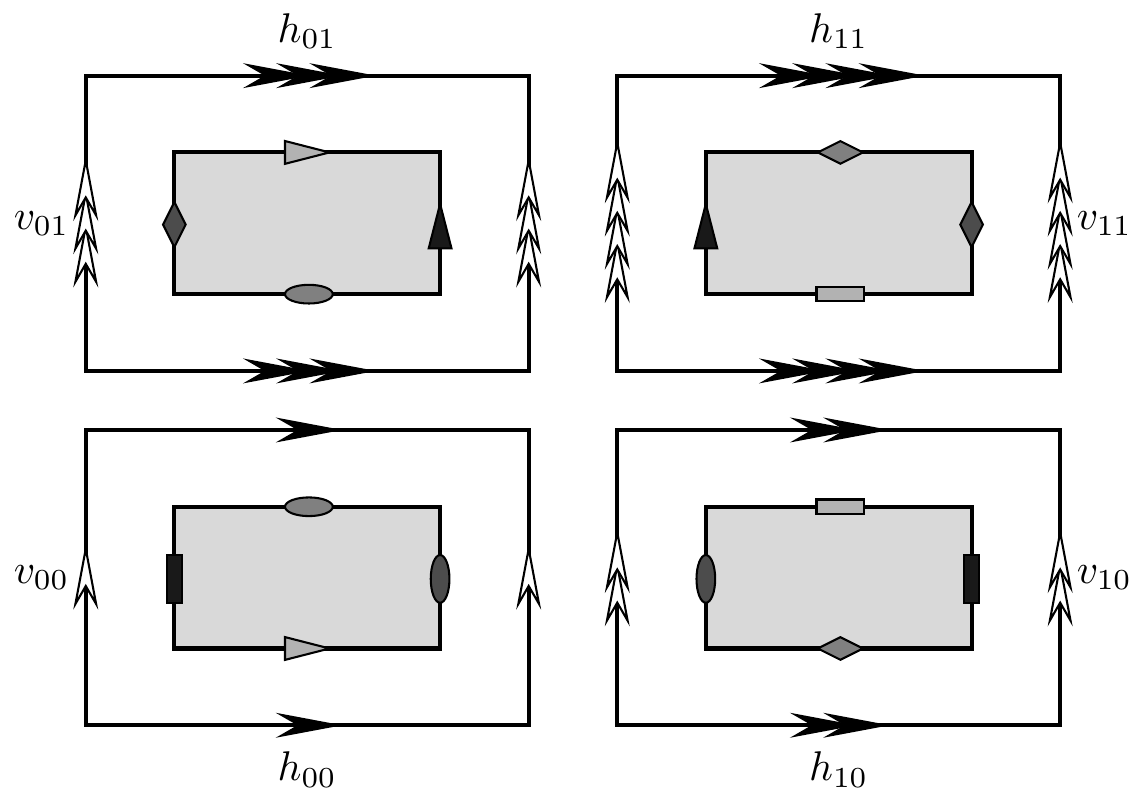}
\caption{The compact translation surface $(X, \omega)$ obtained as quotient over $\ZZ^2$ of an unfolded wind-tree billiard table (\cf \cite[Figure 5]{Delecroix:Zorich}).}
\label{f:compact-surface}
\end{figure}

Observe that $(\ZZ_2)^2$ is a subgroup of $\Aut(X,\omega)$: it is generated by two isometries $\tau_h,\tau_v$. More precisely (see \Cref{f:compact-surface}), $\tau_h$ interchanges the pairs of flat tori with holes in the same rows by parallel translations while the isometry $\tau_v$, the flat tori with holes in the same columns.

This produces a flat symplectic decomposition of the bundle
\begin{equation}
\label{eq:splitting}
H^1(X,\RR)=E^{++}\oplus E^{+-}\oplus E^{-+}\oplus E^{--},
\end{equation}
over $\Maff$, the $\GL(2,\RR)$ orbit closure of $(X,\omega)$ in $\H(2,2,2,2)$,
where $E^{++}$ is the vector space invariant by $\tau_h$ and $\tau_v$, $E^{+-}$ the vector space invariant by $\tau_h$ and
anti-invariant by $\tau_v$, etc. In the sequel we will simply denote $E^{+-}$ by $E^{+}$ and $E^{-+}$ by $E^{-}$.
It follows that we have $E^{++} \simeq H^1(X/\langle\tau_h,\tau_v\rangle,\RR)$ and thus, $H_{st}^1(X,\omega)\subset E^{++}$. Moreover $E^{++}$ is real $4$-dimensional since $X/\langle\tau_h,\tau_v\rangle \in \H(2)$ has genus $2$.
We can also check that the other three subbundles are real $2$-dimensional.
Note that the cocycle $f={(h,v)}^* \in H_1(X,\ZZ^2)$ defining the $\ZZ^2$-cover satisfy $h^* \in E^+$ and $v^* \in E^-$, where ${(\cdot)}^*$ denotes Poincaré duality.

The splitting~\eqref{eq:splitting} extends as a constant splitting of the trivial bundle $\mathrm{PSL}(2,\RR) \times H^1(X,\RR)$ over the $\SL(2,\RR)$-orbit of $(X,\omega)\in \mathcal{T}^{(1)}(\kappa)$. The quotient by the action of the affine group $\Aff (X,\omega) \subset \mathrm{MCG}(M,\Sigma)$
is a well-defined splitting of the real Hodge bundle $H^1(X,\RR)$ over $\Maff^{(1)}$, which is invariant under the KZ cocycle.\footnote{To be very precise, one should consider orbifold vector bundles instead of vector bundles since the spitting is not invariant by the full group $\Aff (X,\omega)$ but only by a subgroup of finite index. Since it does not affect the rest of the paper, we will not enter this technicality. }

\begin{Remark}
\label{r:DZ-decomposition}
In the case of the Delecroix--Zorich variant, the corresponding subspaces to consider are $E^{+} \simeq H^1(X/\langle\tau_h,\iota\circ\tau_v\rangle,\RR)$ and $E^{-} \simeq H^1(X/\langle\iota\circ\tau_h,\tau_v\rangle,\RR)$, where $\iota$ is the involution that rotates in $\pi$ each one of the four copies of the fundamental domain in $X$ (flat tori with holes).
In this case, the quotient surfaces are half-translation surfaces of genus one and therefore $E^+$ and $E^-$ are still (virtually) $\Aff(X,\omega)$-invariant real $2$-dimensional subspaces of $H^1(X,\RR)$ and symplectic-orthogonal to $H_{st}^1(X,\omega)$.
Together with the fact that the underlying surface $X$ is a Veech surface (since the side-lengths are rational), these are the only conditions actually needed to our result to hold.
See \cite[Section~3.2]{Delecroix:Zorich} and \cite[Section~2.4]{Pardo:counting} for further details.
\end{Remark}

%***************************************************************
\subsection{The Kontsevich--Zorich cocycle as a suspension of a representation}
\label{sec:interval}

We now assume $(a,b)\in \mathcal E$. By~\cite{McMullen}, the surface $(X, \omega) = (X_{a,b},\omega_{a,b})$ is a Veech surface. We denote by $\Gamma$ its Veech group. This group does not necessarily preserve the bundles $E^{\pm}$, but there is a finite index subgroup $\Gamma_0$ fixing globally each sub-bundle. The Teichm\"uller geodesic flow on
$\Maff^{(1)}$ coincide with the geodesic flow on the unit bundle of the finite-area hyperbolic surface $\mathbb H/\Gamma$. There are two representations $\rho^{-}$ (resp. $\rho^{+}$) of $\Gamma_0$ into $\Aut(E^{-})$ (resp. $\Aut(E^{+})$) given by the restriction of the associated affine map on the subbundle $E^{+}$ (resp. $E^{-}$). We denote by $A^t$ the restriction of the KZ cocycle to the subbundle $E^{+}\oplus E^{-}$: it is obtained as the product of the suspensions of the representations $\rho^{+},\rho^{-}$. In the sequel we will denote $\rho=(\rho^{+},\rho^{-})$.

Let $\mathcal M$ be the set of ergodic $(g_t)$-invariant probability measure $\mu$ whose support is contained in $\Maff^{(1)}$.
Given any ergodic $\mu\in \mathcal{M}$ whose support is contained in $\Maff^{(1)}$, we let $\lambda^{+}(\mu)$ and $\lambda^{-}(\mu)$ be the two non negative Lyapunov exponents of $A^t$. We set $\lambda(\mu) =\max\{\lambda^{+}(\mu),\lambda^{-}(\mu)\}$
and $\Lambda=\{(\lambda^+(\mu),\lambda^-(\mu)),\mu \in \mathcal M\}$.

%************************************************************
\subsection{Lyapunov exponents and diffusion rate}

The next result is proved in~\cite{Delecroix:Hubert:Lelievre} only for $\SL(2,\RR)$-invariant measure.
However, it is possible to strengthen the conclusion to any $(g_t)$-invariant measure.

\begin{Theorem}
\label{thm:equality:exp:diff}
For any $(a,b)\in \mathcal E$, for any ergodic $(g_t)$-invariant probability measure $\mu$ on $\mathcal{N}^{(1)}$, there
exists $\theta\in\mathcal{D}$ such that $\delta_\theta = \lambda(\mu)$.
\end{Theorem}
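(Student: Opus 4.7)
My approach is to follow the renormalization strategy of \cite{Delecroix:Hubert:Lelievre} but to replace its appeal to $\SL(2,\RR)$-ergodicity by a pointwise Oseledets argument valid along any $(g_t)$-invariant ergodic measure. The structural fact making this possible is that $(X,\omega)=(X_{a,b},\omega_{a,b})$ is a Veech surface, so $\Maff^{(1)}=\SL(2,\RR)/\Gamma$ and every point of $\Maff^{(1)}$ has the form $g\cdot(X,\omega)$ for some $g\in\SL(2,\RR)$.

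First I would recall the cocycle reinterpretation of displacement from \Cref{sec:dim}: a billiard trajectory of length $T$ on $T(a,b)$ in direction $\theta$ starting at $x$ lifts to a straight segment $\gamma_T$ on $(X,\omega)$, and the Euclidean displacement on the $\ZZ^2$-cover $X_\infty$ is given by the pairing of $[\gamma_T]\in H_1(X,\ZZ)$ with the fixed integer cohomology class $f=(h^*,v^*)\in E^+\oplus E^-$. Forni's deviation spectrum, which is pointwise and does not use invariance of any reference measure under the full $\SL(2,\RR)$-action, then identifies
\begin{equation*}
\limsup_{T\to\infty}\frac{\log|\langle f,\gamma_T\rangle|}{\log T}=\limsup_{t\to\infty}\frac{\log\|G^{KZ}_t(r_\theta\cdot(X,\omega))\cdot f\|}{t}
\end{equation*}
for Lebesgue-a.e.\ starting point of the flow in direction $\theta$ on $(X,\omega)$.

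Given an ergodic $(g_t)$-invariant probability $\mu$ on $\Maff^{(1)}$, I would then pick a $\mu$-generic point $(Y,\eta)=g\cdot(X,\omega)$, decompose $g=r_\theta\cdot b$ via Iwasawa, and observe that the forward Teichm\"uller orbits of $(Y,\eta)$ and of $r_\theta\cdot(X,\omega)$ lie on a common strong-unstable horocycle after bounded time, so they share the same forward Lyapunov behavior on the flat bundle $E^+\oplus E^-$. Oseledets' theorem at $(Y,\eta)$ produces the exponents $\pm\lambda^+(\mu),\pm\lambda^-(\mu)$, and the fixed class $f=h^*+v^*$ realizes the top exponent $\lambda(\mu)=\max\{\lambda^+(\mu),\lambda^-(\mu)\}$. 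Combining with the display above yields $\delta_\theta=\lambda(\mu)$; the direction $\theta$ belongs to $\mathcal D$ because $\delta_\theta(x)$ is then almost everywhere constant (either by unique ergodicity of the flow in direction $\theta$, or by a direct computation in the exceptional case of a completely periodic direction).

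The main obstacle is the claim that $f$ realizes the top Lyapunov exponent at $\mu$-a.e.\ point rather than sitting in a slower Oseledets subspace. Any $G^{KZ}_t$-invariant measurable line field in $E^+$ (or $E^-$) that avoids $h^*$ (resp.\ $v^*$) would, by flatness of $E^\pm$ and the representation picture of \Cref{sec:interval}, descend to a $\rho^+$-invariant (resp.\ $\rho^-$-invariant) line in $\RR^2$, contradicting the irreducibility of $\rho^\pm$ proved in \Cref{sec:parabolic}. A mild bookkeeping point is that $E^+$ and $E^-$ must be handled separately so that the top exponent across the direct sum is attained by the projection of $f$ onto whichever summand carries the larger exponent.
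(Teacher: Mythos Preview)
Your overall plan---reinterpret displacement as the pairing with the fixed class $f=(h^*,v^*)\in E^+\oplus E^-$, invoke Oseledets along the given $(g_t)$-ergodic $\mu$, and use Iwasawa together with the Veech property to produce a direction $\theta$---is exactly the paper's approach. The divergence, and the genuine gap, is at what you call the ``main obstacle''.

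Your claim that a $(g_t)$-invariant measurable line field in $E^+$ would, by flatness, ``descend to a $\rho^+$-invariant line in $\RR^2$'' is not correct. Flatness of $E^+$ means the \emph{bundle} is defined by $\rho^+$; it does not force a $(g_t)$-invariant \emph{measurable} line subbundle to be flat (Oseledets subspaces almost never are). Concretely, take $\mu$ supported on the periodic orbit corresponding to a hyperbolic $\gamma\in\Gamma$: the stable line of the cocycle on $E^+$ is then simply the contracting eigenline of the single matrix $\rho^+(\gamma)$, and irreducibility of the full representation $\rho^+$ says nothing about whether that particular eigenline equals $\RR h^*$. (Your phrasing ``avoids $h^*$'' is also inverted: what you need to rule out is $h^*\in E^s$, not $h^*\notin E^s$.) The paper's argument is both simpler and decisive: $h^*$ and $v^*$ are nonzero integer classes, and the cocycle on each $E^\pm$ preserves the lattice $E^\pm(\ZZ)$ (it acts through $\rho^\pm$ with values in $\SL(2,\ZZ)$), so $\|G^{KZ}_t\cdot h^*\|$ and $\|G^{KZ}_t\cdot v^*\|$ are bounded below by a positive constant for all $t$. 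Hence neither can lie in a strictly contracting Oseledets line, and applying this to each summand separately shows $f$ realizes $\max\{\lambda^+(\mu),\lambda^-(\mu)\}$.

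A smaller point: your Iwasawa step is stated imprecisely. Points on a common \emph{strong-unstable} horocycle \emph{diverge} under forward $g_t$ and do not share forward Lyapunov behavior. The paper uses the decomposition $g=h_s g_{t_0} r_\theta$ with $h_s$ in the \emph{stable} horocycle subgroup (the one preserving the vertical foliation), and observes that the deviation functional $F$ is invariant under left multiplication by both $h_s$ and $g_{t_0}$; this immediately gives $F(Y)=F(r_\theta X)=\delta_\theta$. Your $g=r_\theta\cdot b$ ordering does not directly yield this invariance.
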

\begin{proof}
For any translation surface $(Y,\eta)\in \mathcal H(\kappa)$, and
any $t \in \RR$, $y\in Y$ we define $\gamma_t(Y,y)\in H_1(Y,\ZZ)$
a vertical geodesic segment of length $t$ from $y$ that is closed by a
uniformly bounded curve. Let $f\in H^1(X,\ZZ^2)$ be the cocycle in \eqref{eq:cocycle} that defines the wind-tree model. Then~\cite[Proposition~1]{Delecroix:Hubert:Lelievre} establishes that for a given $\theta$ and $x$
$$
\delta_\theta(x) = \limsup_{T \to +\infty} \frac{\log \|\langle f,\gamma_T(r_\theta X,x)\rangle\|}{\log T},
$$
where $\gamma_T(r_\theta X,x)\in H_1(X,\ZZ)$.
Given a small neighbourhood $U$ of $(X,\omega)$ in $\mathcal{N}^{(1)}$, for which there is a trivialization of the Hodge bundle, for any $(Y,\eta)\in U$ one naturally defines, for $y\in Y$
$$
F(Y,y)\coloneqq\limsup_{T \to +\infty} \frac{\log \|\langle f,\gamma_T(Y,y)\rangle\|}{\log T}.
$$
Observe that $F(Y,y)$ is constant (equal to $F(Y)$) for Lebesgue almost every $y\in Y$. A quick inspection of the proof of~\cite[Lemma~11]{Delecroix:Hubert:Lelievre}
(related to Oseledets theorem) reveals that in fact its conclusion holds for any
$(g_t)$-invariant ergodic measure $\mu$. Namely, for $\mu$ almost every $(Y,\eta)\in U$
\begin{equation}
\label{eq:la2}
F(Y)= \lambda(\mu).
\end{equation}
(The proof is identically to the one of Delecroix--Hubert--Leli\`evre. The additional step to check is
that the restriction of $f$ to $E^{+}\oplus E^{-} \subset H^1(Y,\mathbb R)$ does not belong to the stable subspace.
But this restriction is an integer covector and the KZ cocycle takes values in the set of integer matrices
of determinant $1$ so $\|G^{KZ}_t(Y)f\|$ cannot go to zero as $t$ goes to infinity as it is
bounded from below by a positive constant). Hence~\eqref{eq:la2} holds. \medskip

Next we want to rely~\eqref{eq:la2} to the diffusion rate $\delta_\theta(x)$.
We follow ideas in a previous version of~\cite{Delecroix:Hubert:Lelievre} (precisely, \cite[Section~6]{Delecroix:Hubert:Lelievre:arXiv}, in the third arXiv version of \cite{Delecroix:Hubert:Lelievre}), prior to~\cite{Chaika:Eskin} .
Since the support of $\mu$ is contained in $\Maff^{(1)}$, $U$ can be locally identified with a neighbourhood of
$\SL(2,\RR)$ (recall that $(a,b)\in \mathcal E$, so $X_{a,b}$ is a Veech surface). The Iwasawa decomposition implies that the set
$$
\{h_sg_tr_\theta X; s,t\in \RR,\ \theta\in \mathbb S^1 \}
$$
contains a generic point $(s,t,\theta)$ of the measure $\mu$. Hence $\lambda(\mu)=F(Y)=F(h_sg_tr_\theta X)$.
But the function $F(\cdot)$ is invariant under the geodesic flow and the horocyclic flow.
It follows that $\lambda(\mu)=F(h_sg_tr_\theta X,y)=F(r_\theta X,h_sg_t(y))=\delta_\theta(x)$, where $x=h_sg_t(y)$ is the affine action of $h_sg_t$ on $Y$. This holds for Lebesgue almost every $x\in X$ thus $\theta\in \mathcal{D}$ and $\delta_\theta(x) = \delta_\theta=\lambda(\mu)$ as required.
\end{proof}

We end this section with two remarks.

\begin{Remark}
\label{rk:join:diffusion}
We can easily extend the previous theorem to join diffusions. Namely, by considering $d_{\mathrm{h}}$ and $d_{\mathrm{v}}$ instead of the distance $d$, we construct the function $F$ and show that $F(Y)$ is equal to $(\lambda(\mu),\lambda'(\mu))$ for $\mu$ almost every $(Y,\eta)$. We then use~\cite[Section~6]{Delecroix:Hubert:Lelievre:arXiv} to rely this quantity to the join diffusion $(\delta^{\mathrm{h}}_\theta, \delta^{\mathrm{v}}_\theta)$ for some $\theta$. More precisely, the following result holds: For any $(a,b)\in \mathcal E$ and any ergodic $(g_t)$-invariant probability measure $\mu$ on $\mathcal{N}^{(1)}$, there exists $\theta\in\mathcal{D}$ such that $(\delta^{\mathrm{h}}_\theta, \delta^{\mathrm{v}}_\theta) = (\lambda(\mu),\lambda'(\mu))$.
\end{Remark}

\begin{Remark}
\label{rk:top}
Forni~\cite{Forni} showed that the maximal Lyapunov exponent associated to any ergodic $(g_t)$-invariant
measure is always simple.
In our situation the KZ cocycle restricted to the subbundle whose fiber above $(X,\omega)$ is $E^{++}\oplus E^{+} \oplus E^{-}$ has non
negative spectrum
$$
1,\lambda^{++}(\mu), \lambda^{+}(\mu),\lambda^{-}(\mu).
$$
Since the maximal Lyapunov exponent is $1$ we have
$\lambda(\mu)=\max\{\lambda^{+}(\mu),\lambda^{-}(\mu)\}<1$.
\end{Remark}

%************************************************************
%************************************************************
%************************************************************
\section{Parabolic elements}
\label{sec:parabolic}

In order to prove \Cref{thm:main} and \Cref{thm:square} we use the existence of some parabolic elements in the Veech group outside the kernel of the representations $\rho^{+}$ and $\rho^{-}$.
In this direction, in this section we give a geometric criterion in order to classify parabolic elements in the kernel of these representations.

\subsection{Parabolic elements and cylinder decompositions}
A \emph{cylinder} on a translation surface $(X,\omega)$ is a maximal open annulus filled by isotopic simple closed regular geodesics, isometric to $\RR/w\ZZ \times (0,h)$, for some $w,h > 0$.
Its \emph{modulus} is the ratio $m = w/h$ and its \emph{core curve} is the simple closed geodesic identified with $\RR/w\ZZ \times \{h/2\}$.
A \emph{cylinder decomposition} of $X$ is a collection of parallel cylinders with disjoint interiors and whose closures cover $X$.
The direction of a cylinder or a cylinder decomposition corresponds to the angle between the horizontal direction and the core curves.

The following result is needed for what follows.
\begin{Theorem}\label{thm:cylWei}
Let $(a,b) \in \mathcal{E}$. Then, for any pair of regular Weierstrass points in $L_{a,b}$ there is a cylinder in $L_{a,b}$ whose core curve passes through these two points.
\end{Theorem}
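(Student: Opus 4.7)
The plan is to produce, for any pair of distinct regular Weierstrass points $P_1,P_2\in L_{a,b}$, a closed regular geodesic containing both points, and then to invoke Veech's dichotomy to conclude that this geodesic is a multiple of the core curve of a cylinder.

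First, I would use the hyperelliptic structure of $L_{a,b}$. Since $L_{a,b}\in \H(2)$, it carries a unique hyperelliptic involution $J$ whose derivative in translation charts is $-\mathrm{Id}$; the six fixed points of $J$ are the Weierstrass points of the underlying Riemann surface, one of which is the zero of $\omega$ and the remaining five are the \emph{regular} Weierstrass points. In particular $J(P_1)=P_1$ and $J(P_2)=P_2$. Let $\sigma\colon[0,L]\to L_{a,b}$ be a shortest geodesic from $P_1$ to $P_2$. Because the zero of $\omega$ is a cone point of angle $6\pi>2\pi$, no length-minimizing path between regular points may traverse it (any such path can be strictly shortened by pushing it off into the wider cone sector). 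Thus $\sigma$ stays in the regular part of the surface and is a straight Euclidean segment in some direction $\theta$.

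Next, I would define
\[
\gamma(t)=\sigma(t)\ \text{for}\ t\in[0,L],\qquad \gamma(t)=J(\sigma(2L-t))\ \text{for}\ t\in[L,2L].
\]
Using $J(P_i)=P_i$ and $DJ=-\mathrm{Id}$, a direct computation shows that $\gamma$ is continuous and has matching tangent direction $\theta$ at $t=L$ and at $t=0\sim 2L$ (the two minus signs coming from $DJ$ and from the reverse parameterization cancel). Hence $\gamma$ is a closed regular geodesic in the direction $\theta$ whose image contains both $P_1$ and $P_2$ and stays away from the singularity.

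Finally, since $(a,b)\in \mathcal E$, the surface $L_{a,b}$ is a Veech surface by McMullen's theorem. Veech's dichotomy then asserts that every direction admitting a regular closed geodesic is completely periodic, so $L_{a,b}$ decomposes into cylinders in the direction $\theta$. The closed orbit $\gamma$ therefore lies inside a single cylinder $C$ and is a positive integer multiple of its core curve, which consequently passes through both $P_1$ and $P_2$. The only delicate step is the claim that a shortest geodesic between regular points of a stratum with cone angles strictly greater than $2\pi$ avoids the singularity; once this is granted, the rest is a standard manipulation of the hyperelliptic involution combined with Veech's dichotomy.
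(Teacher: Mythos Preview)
Your overall strategy---produce a straight segment $\sigma$ from $P_1$ to $P_2$ in the regular part, double it via the hyperelliptic involution, and invoke Veech's dichotomy---is sound and, once the segment exists, the remainder of your argument is correct (including the observation that the resulting closed geodesic sits on the $J$-fixed circle of its cylinder, hence on the genuine core curve). The paper, by contrast, does not argue geometrically at all: it simply cites \cite[Theorem~1]{Pardo:non-varying}, which shows that the Siegel--Veech constant for cylinders through any fixed pair of regular Weierstrass points is positive, so infinitely many such cylinders exist. Your route would be more elementary and self-contained if it worked.

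However, the step you flag as ``delicate'' is actually wrong as stated. You claim that because the cone angle at the zero is $6\pi>2\pi$, a length-minimizing path between regular points cannot traverse it. The inequality goes the other way: a broken straight path through a cone point of total angle $\alpha$ is locally length-minimizing precisely when both sectors it cuts off have angle $\geq \pi$, which is possible exactly when $\alpha\geq 2\pi$. Concretely, on a Euclidean cone of angle $3\pi$, two points at unit distance from the apex with angular separation $3\pi/2$ have shortest path through the apex. So on a translation surface in $\H(2)$ the unique singularity can, and for some pairs of points does, lie on the minimizing geodesic; your shortening argument fails because there is no side with sector $<\pi$ to push into.

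Worse, the existence of \emph{some} straight segment from $P_1$ to $P_2$ missing the singularity is exactly equivalent to the theorem you are trying to prove: given such a segment, your doubling gives the cylinder; conversely, an arc of the core curve gives the segment. So you have reduced the statement to itself. To salvage the approach you would need an independent reason why two regular Weierstrass points on $L_{a,b}$ are always ``visible'' from one another---for instance, a direct inspection of the L-shape showing that for each of the ten pairs one can draw an explicit segment avoiding the concave corner---but this is not supplied, and it is not a triviality (illumination problems on translation surfaces are subtle in general).
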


\begin{proof}
This is a direct corollary of \cite[Theorem~1]{Pardo:non-varying}, which states that the number of cylinders whose core curve passes through any pair of marked regular Weierstrass points has quadratic growth rate (positive \emph{area Siegel--Veech constant}).
In particular, there are infinitely many such cylinders for any pair of regular Weierstrass points in $L_{a,b}$.
\end{proof}

By the work of Veech~\cite{Veech}, cylinder decompositions are related to parabolic elements in the following way.
Let $R_\theta$ be the matrix rotating the plane (counterclockwise) by $\theta \in [0,2\pi)$ and let $T^t = \left(\begin{smallmatrix} 1 & t \\ 0 & 1 \end{smallmatrix}\right)$, $t \in \RR$.
Then, a matrix of the form $P_\theta^t = R_\theta T^t R_\theta^{-1}$ belongs to $\SL(X,\omega)$ if and only if there exists a cylinder decomposition $\{C_i\}_{i \in I}$ of $X$ in direction $\theta$ such that the moduli $m_i$ of the cylinders $C_i$ are pairwise rationally related, that is, if $r_{ij} = \frac{m_i}{m_j} \in \QQ$ for every $i,j \in I$.
Furthermore, if $t \in \RR$ is an integer multiple of the modulus of each cylinder, that is, for each $i \in I$ there exist $k_i \in \ZZ$ such that $t = k_im_i$, then $P_\theta^t \in \SL(X,\omega)$. The corresponding affine transformation preserves the cylinder decomposition, as it twists each cylinder $C_i$ exactly $k_i$ times along itself.

\subsection{Geometric criterion}
\label{para-geom}
The aim of this section is to prove a geometric criterion for a parabolic element in $\Gamma$ to be in $K^{\pm} \coloneqq \ker(\rho^{\pm})$ (\Cref{thm:geom} below).
But we first give the actual geometric intuition and motivation 
in terms of the symmetries of the wind-tree model.
These ideas date back to \cite{Hubert:Lelievre:Troubetzkoy} and have been used in several other works on the wind-tree model, in particular, in \cite{Avila:Hubert} to provide a geometric criterion for the recurrence of the model or further, in \cite{Pardo:counting}, to prove asymptotic formulas for the number of periodic trajectories.
Even if we do not use explicitly this approach in the proofs, this is very much the actual ideas behind them.
We supply them for the reader's convenience.

Recall that a wind-tree billiard covers an L-shaped translation surface $L_{a,b} = X_{a,b}/\langle\tau_h,\tau_v\rangle \in \H(2)$, for some $a,b \in (0,1)$, as in \Cref{fig:Lab} (left).
The surface $L = L_{a,b}$ is thus hyperelliptic and its six Weierstrass points are labeled $A,B,C,D,E,F$ as depicted in \Cref{fig:Lab} (right).
When the parameters $a, b$ are rational, the surface $L$ is 
a square-tiled surface.

\begin{figure}[ht]
\includegraphics{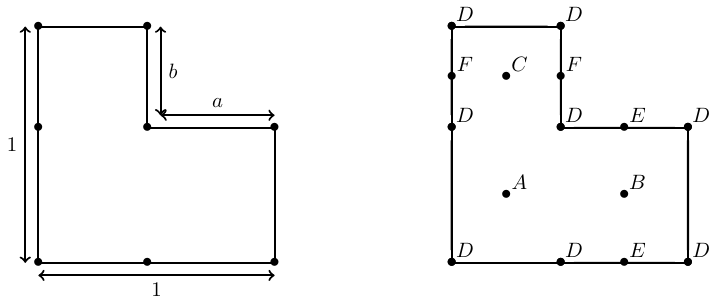}
\caption{The surface $L_{a,b}$ (left) and its six Weierstrass points (right).}
\label{fig:Lab}
\end{figure}

In $\H(2)$, as in any hyperelliptic component, every cylinder is invariant under the hyperelliptic involution and, in particular, the core curve of any cylinder contains exactly two regular Weierstrass points (see, \eg, \cite[Remark~5]{Pardo:non-varying}).
At the level of the wind-tree model, points in the fibers over the five regular Weierstrass points $A,B,C,E,F \in L$ give raise to symmetries of the infinite billiard table.
Thus, as can be seen in \Cref{fig:wtm-symmetries}, the points over $A,B,C$ give central symmetries, while the points over $E$ (resp. $F$) give axial symmetries through the corresponding vertical (resp. horizontal) lines passing through.

\begin{figure}[ht]
\includegraphics{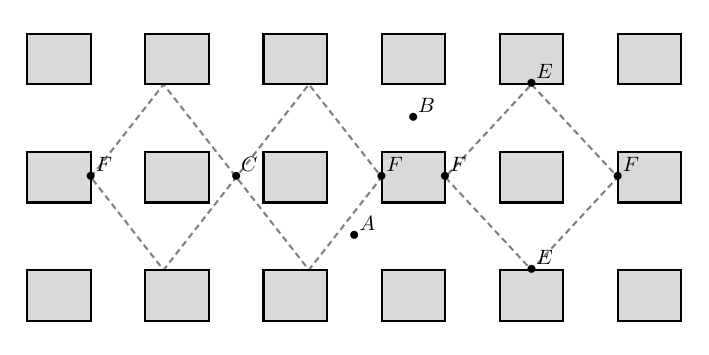}
\caption{The wind-tree model and some points over the regular Weierstrass points of $L \in \H(2)$ labeled as their image in $L$.
The points over $A,B,C$ give central symmetries.
The point over $E$ (resp. $F$) gives an axial symmetry through the corresponding vertical (resp. horizontal) line passing through.
}
\label{fig:wtm-symmetries}
\end{figure}

Because of these symmetries, 
it is easy to see, for example, that a closed geodesic passing though the point $A$ is necessarily unbounded in the corresponding wind-tree billiard. Moreover, by the same reasons, the only possibilities for a closed geodesic in $L$ passing through two regular Weierstrass points to lift into a periodic trajectory in the wind-tree model is that either
\begin{itemize}
\item it passes through $E$ and $F$; or
\item it passes through $E$ and $B$ (resp. $F$ and $C$) in such a way that the corresponding points in the fiber in the wind-tree model are in the same vertical (resp. horizontal) line;
\end{itemize}
and any such trajectory is indeed periodic.
See \Cref{fig:wtm-symmetries}.
However, the last case can be easily broken through the action of the affine group.

On the other hand, parabolic elements in the Veech group of $L$ acts on the homology of the wind-tree model as an affine multi-twists through the core curves of the cylinders in the corresponding direction.
Thus, in order to act trivially in $E^{+}$ and $E^{-}$, the corresponding affine multi-twists have to, in particular, fix the cocycles $h\in E^{+}$ and $v\in E^{-}$ defining the $\ZZ^2$-covering. In other words, the core curves have to have trivial monodromy or, what is the same, give raise to periodic trajectories in the wind-tree model (closed curves in the $\ZZ^2$-cover).
Moreover, the spaces $E^{+}$ and $E^{-}$ are virtually invariant under the action of the affine group and this action is irreducible.
It follows from the previous discussion that the only possibility left for a parabolic element to be in the kernel of both representations is that the core curves of the cylinders in the corresponding direction passes through $E$ and $F$.

The previous discussion can be formalized (and refined) as the following geometric criterion.

\begin{Theorem}
\label{thm:geom}
Let $p \in \Gamma$ be a parabolic element. Then, $p \in K^{+}$ \emph{(resp. $K^{-}$)} if and only if the direction fixed by $p$ is a one-cylinder direction in $L$ and $E$ \emph{(resp. $F$)} is in the core curve of that cylinder.
\end{Theorem}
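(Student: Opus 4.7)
My plan is to read the action of $p$ on $E^+\subset H^1(X,\RR)$ off of the multi-twist formula and translate the algebraic conditions into the geometric language of Weierstrass points of $L$. Let $\gamma_1,\ldots,\gamma_n$ be the core curves of the cylinder decomposition of $L$ in the direction fixed by $p$, with lifts $\tilde\gamma_j^{(i)}$ to $X$ via the unramified degree-$4$ cover $X\to L$. By $\langle\tau_h,\tau_v\rangle$-equivariance, all lifts of $\gamma_i$ share a common twist integer $k_i$, so
$$p^*(\alpha) \;=\; \alpha + \sum_{i,j} k_i\,\alpha(\tilde\gamma_j^{(i)})\,\mathrm{PD}(\tilde\gamma_j^{(i)}),\qquad \alpha\in H^1(X,\RR).$$
The dictionary sketched in \Cref{para-geom} identifies $h(\tilde\gamma_j^{(i)})$ with the horizontal $\ZZ$-monodromy of the cover $X_\infty\to X$ around $\tilde\gamma_j^{(i)}$, and shows that this monodromy vanishes on all lifts if and only if $\gamma_i$ passes through the Weierstrass point $E$ (the vertical reflective symmetry of the billiard); symmetrically for $v$ and $F$.

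For the reverse implication, assume the parabolic direction has a single cylinder with core curve $\gamma$ through $E$. Then $h(\tilde\gamma_j)=0$ for every lift, so $p^*(h)=h$. Since $\rho^+(p)\in\SL(E^+)$ is unipotent (a product of commuting Dehn twists), it is either the identity or a single non-trivial shear with fixed line $\RR h$. To rule out the shear, I would lift the hyperelliptic involution $\iota_L$ of $L$ to an involution $\iota_X$ of $X$; such a lift exists because the character classifying $X\to L$ factors through $H_1(L,\ZZ/2)$, on which $\iota_L$ acts trivially. Since $\iota_X$ commutes with $\tau_h,\tau_v$ modulo the covering group, it preserves the decomposition $\bigoplus E^{\pm\pm}$; provided $\iota_X^*|_{E^+}$ is non-scalar, its two eigenlines are both $\rho^+(p)$-invariant and hence both fixed by the unipotent $\rho^+(p)$, forcing $\rho^+(p)=\mathrm{id}$. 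Conversely, $p\in K^+$ makes the right-hand side of the multi-twist formula vanish on every $\alpha\in E^+$; after $\langle\tau_h,\tau_v\rangle$-symmetrization, this reduces to $\alpha(\tilde\gamma_1^{(i)})\,P^+\mathrm{PD}(\tilde\gamma_1^{(i)})=0$ for each $i$ and each $\alpha\in E^+$. Using that the core curves of distinct cylinders on $L\in\H(2)$ in a given direction are linearly independent in $H_1(L,\RR)$ up to the sole relation allowed in the three-cylinder case, the projected Poincaré duals for distinct $i$ remain independent in $E^+$, forcing each intersection $h(\tilde\gamma_j^{(i)})$ to vanish. Hence every $\gamma_i$ passes through $E$; as the $\gamma_i$ are disjoint parallel simple closed geodesics on $L$ and $E$ is a single point, this forces $n=1$. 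The $-$ case is identical with $F$ in place of $E$.

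The main obstacle I foresee is the symplectic upgrade from $p^*(h)=h$ to $\rho^+(p)=\mathrm{id}$: a unipotent element of $\SL(E^+)$ fixing one non-zero vector is not automatically trivial. The route via the lift $\iota_X$ is clean in principle but requires checking that $\iota_X^*|_{E^+}$ is non-scalar, equivalently that $\iota_X$ is not a hyperelliptic involution of the genus-$5$ surface $X$ (if any such involution exists on $X$ at all). An alternative I would keep in reserve is a direct computation showing that the vanishing of horizontal monodromy around $\gamma$ in the $\ZZ^2$-cover forces the full $E^{+-}_{H_1}$-component of the lift $\tilde\gamma$ to vanish, using the specific four-fold unramified covering structure together with the position of $E$ as the fixed point of the vertical reflection.
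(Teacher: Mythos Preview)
Your multi-twist framework is correct and matches the paper's, but two genuine gaps remain.

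\textbf{Sufficiency.} You only use the single condition $h^*(\tilde\gamma_j)=0$ (horizontal monodromy vanishes) and then try to upgrade $\rho^+(p)h^*=h^*$ to $\rho^+(p)=\mathrm{id}$ via a lifted hyperelliptic involution whose action on $E^+$ you have not computed. This detour is unnecessary: the paper's key lemma (Lemma~\ref{lemm:ann}, quoting \cite[Proposition~6.3]{Pardo:counting}) says that if $E\in\gamma$ then every lift $\tilde\gamma_j$ lies in the \emph{full} annihilator $\ann(E^+)$, i.e.\ $\alpha(\tilde\gamma_j)=0$ for \emph{all} $\alpha\in E^+$, not just for $\alpha=h^*$. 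Plugging this into the multi-twist formula gives $\rho^+(p)=\mathrm{id}$ immediately, with no appeal to $\iota_X$.

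\textbf{Necessity.} In $\H(2)$ a parabolic direction has exactly one or two cylinders; there is no ``three-cylinder case''. In the two-cylinder case your claim that the projections $f_i=\pr_{E^+}[\tilde\gamma^{(i)}]^*$ are linearly independent in the $2$-dimensional space $E^+$ is not justified, and the paper in fact treats the collinear sub-case separately. The missing idea is positivity: when $f_0,f_1\neq 0$ are collinear, say $f_1=\lambda f_0$, the restriction of the multi-twist to $E^+$ reads $\alpha\mapsto\alpha+(k_0n_0+\lambda^2 k_1 n_1)\langle\alpha,f_0\rangle f_0$, and the coefficient cannot vanish because the twist integers $k_i$ and multiplicities $n_i$ are positive. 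This positivity argument is what actually rules out $p\in K^+$ for every two-cylinder direction; your linear-independence route cannot reach that conclusion on its own.
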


Before proving \Cref{thm:geom}, we give the following
two results
needed for the proof of \Cref{thm:main}.
The first, on the existence of parabolic elements outside of the kernel of both representations, allows to ensure that $0 \in \Lambda$.

\begin{Corollary}
\label{c:parabolic-in-complement}
Let $\Gamma_0$ be any finite index subgroup of $\Gamma$.
Then, there is a parabolic element $\gamma \in \Gamma_0$ not in either $K^{+} = \ker(\rho^{+})$ or $K^{-} = \ker(\rho^{-})$.
\end{Corollary}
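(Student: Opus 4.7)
The plan is to produce a parabolic $\gamma\in\Gamma_0$ whose fixed direction fails the geometric condition of \Cref{thm:geom} for both $K^+$ and $K^-$ simultaneously. The crucial observation is that membership of a parabolic in $K^\pm$ depends only on the direction it fixes, so it is enough to exhibit a single suitable direction on $L = L_{a,b}$ and then promote the corresponding parabolic element into $\Gamma_0$.

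First, I would apply \Cref{thm:cylWei} to a pair of regular Weierstrass points of $L$ that excludes both $E$ and $F$ -- for instance the pair $\{A,B\}$. This yields a cylinder $C$ on $L$ whose core curve passes through $A$ and $B$; let $\theta$ denote its direction. Since $(a,b)\in \mathcal E$, the surface $L$ is a Veech surface, hence $\theta$ is completely periodic: $L$ decomposes in direction $\theta$ into parallel cylinders with pairwise commensurable moduli. Lifting this decomposition through the cover $X\to L$ of \Cref{sec:dim} gives a parallel cylinder decomposition of $X$ whose moduli remain pairwise commensurable, which by Veech's construction produces a parabolic element $\gamma_0\in\Gamma$ fixing the direction $\theta$. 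Because $\Gamma_0$ has finite index in $\Gamma$, a sufficiently large power $\gamma\coloneqq\gamma_0^n$ lies in $\Gamma_0$, remains parabolic, and still fixes the same direction $\theta$.

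It then remains to verify that $\gamma\notin K^+\cup K^-$, which I would do via \Cref{thm:geom}. If $\theta$ is a multi-cylinder direction on $L$, the ``one-cylinder'' hypothesis of \Cref{thm:geom} is violated and the conclusion is immediate for both $K^+$ and $K^-$. Otherwise $\theta$ is a one-cylinder direction on $L$, and the unique cylinder of the decomposition necessarily coincides with $C$, whose core curve passes through $A$ and $B$. The hyperelliptic structure of $L\in\H(2)$ forces the core curve of each cylinder to contain exactly two regular Weierstrass points (the fixed points of the hyperelliptic involution restricted to that curve), so in our setting these are precisely $A$ and $B$. In particular neither $E$ nor $F$ lies on this core curve, and \Cref{thm:geom} again yields $\gamma\notin K^+$ and $\gamma\notin K^-$.

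The only genuine input is the existence of a cylinder whose core curve passes through a prescribed pair of regular Weierstrass points; this is exactly what \Cref{thm:cylWei} provides, and I expect no real obstacle beyond invoking it. The remaining steps -- passing to a finite-index subgroup by taking a power, lifting the cylinder decomposition through the cover $X\to L$, and reading off the kernel condition from \Cref{thm:geom} -- are routine, so the crux of the proof reduces to combining \Cref{thm:cylWei} with \Cref{thm:geom}.
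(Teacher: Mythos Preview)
Your proof is correct and follows the same underlying logic as the paper's: produce a parabolic direction on $L$ that fails the hypothesis of \Cref{thm:geom} for both $K^+$ and $K^-$, then pass to a power to land in $\Gamma_0$.

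The difference is that you work harder than necessary. The paper simply takes \emph{any} two-cylinder direction on $L$ (for instance the horizontal one, which is always a two-cylinder direction for $L_{a,b}$); since \Cref{thm:geom} requires a one-cylinder direction for membership in $K^\pm$, the corresponding parabolic and all its powers are automatically outside both kernels. Your route instead invokes \Cref{thm:cylWei} to manufacture a cylinder through $\{A,B\}$ and then case-splits on whether the resulting direction is one- or two-cylinder. This is valid, but the appeal to \Cref{thm:cylWei} is avoidable: the two-cylinder case alone already handles everything, and a two-cylinder direction is immediately available without any existence theorem. So your argument is a correct but slightly roundabout variant of the paper's one-line proof.
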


\begin{proof}
Take any two-cylinder decomposition of $L$.
Let $p \in \Gamma$ be the parabolic element associated to the corresponding cylinder decomposition on $X$.
Since $\Gamma_0$ has finite index in $\Gamma$, some power of $p$ is in $\Gamma_0$.
But, by \Cref{thm:geom}, no power of $p$ can be in $K^{+}$ or $K^{-}$.
\end{proof}

\begin{Remark}
In the case of the Delecroix--Zorich variant, a geometric description of parabolic elements in the kernels of the representation is also possible but significantly more intricate (\cf \cite[Section~6.1]{Pardo:counting}).
Nonetheless, \Cref{c:parabolic-in-complement} is still valid for the Delecroix--Zorich variant.
In fact, it is enough to exhibit one such parabolic element and it is possible to verify that the horizontal and the vertical directions give parabolic elements that are in neither of the representations.
\end{Remark}

The following result shows the existence of \emph{strip decompositions}, ensuring that almost every trajectory in that direction escapes linearly to infinity, that is, $1 \in \{\delta_\theta;\, \theta \in \mathcal D\}$.
More precisely, a \emph{strip} on a (necessarily infinite) translation surface is an isometrically embedded product of an open interval and a straight line.
In \Cref{fig:strips} we depict several strips embedded in the wind-tree billiard.

In our context, strips arises as lifts of cylinders having non-trivial $\ZZ^2$-monodromy.
Thus, for example, the horizontal and vertical foliations on $X_\infty$ decomposes as a reunion of (the closure of) strips and cylinders.
A \emph{strip decomposition} of $X_\infty$ is then a collection of parallel strips whose closures cover $X_\infty$.

Now, if $X_\infty$ allows a strip decomposition in a direction $\theta \in \mathbb S^1$, it is clear that $\theta \in \mathcal D$ and $\delta_\theta = 1$ as any trajectory inside a strip escapes linearly.

\begin{figure}[ht]
\includegraphics{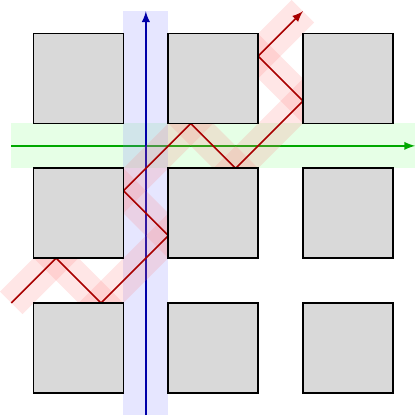}
\caption{Some cylinders in $X$ lift to strips in $X_\infty$. They correspond to unbounded \emph{drift-periodic} billiard trajectories in the wind-tree model.}
\label{fig:strips}
\end{figure}

\begin{Proposition}
\label{c:strip-decomposition}
For any $(a,b) \in \mathcal E$, there is a strip decomposition on $X_\infty = X_\infty(a,b)$.
\end{Proposition}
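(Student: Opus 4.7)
The goal is to exhibit a direction $\theta$ on $X_{a,b}$ in which every cylinder of the decomposition lifts to a strip on $X_\infty$. Recalling that $X_\infty \to X$ is defined by the cocycle $f = (h,v) \in H^1(X,\ZZ^2)$ from~\eqref{eq:cocycle}, a cylinder $C \subset X$ with core curve $c$ lifts to a strip on $X_\infty$ if and only if the $\ZZ^2$-monodromy $\langle f, c \rangle \in \ZZ^2$ is nonzero; equivalently, the billiard trajectory inside $C$ is drift-periodic rather than periodic. So the problem reduces to finding a periodic direction on $X$ in which no core curve lies in $\ker f$.

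The plan is to apply \Cref{thm:cylWei} to a pair of regular Weierstrass points of $L_{a,b}$ different from the distinguished pair $\{E,F\}$, for instance the pair $\{A,B\}$, to obtain a cylinder $C_0 \subset L_{a,b}$ whose core curve passes through both $A$ and $B$. This determines a direction $\theta$ in which $L_{a,b}$ decomposes into at most two cylinders, since $L_{a,b} \in \mathcal H(2)$. In the two-cylinder case, the second core curve also passes through two of the remaining regular Weierstrass points.

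Next, I would use the geometric discussion preceding \Cref{thm:geom} (summarized there in terms of the axial symmetries through $E$ and $F$ and the central symmetries through $A, B, C$) to rule out that any core curve in this decomposition has trivial $\ZZ^2$-monodromy: by construction the core curves cannot realize the exceptional configurations ($\{E,F\}$ on the same core curve, or specifically aligned $\{E,B\}$, resp.\ $\{F,C\}$). Moreover, since \Cref{thm:cylWei} provides in fact infinitely many cylinders through any prescribed pair of regular Weierstrass points, the alignment exceptions can always be avoided by replacing our initial choice with another cylinder through the same pair if needed. Every cylinder of the decomposition of $X$ obtained by lifting this decomposition of $L_{a,b}$ along $X \to X/\langle \tau_h,\tau_v\rangle = L_{a,b}$ therefore has a core curve with nonzero image under $f$, yielding the desired strip decomposition of $X_\infty$.

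The main obstacle is to convert the geometric intuition about billiard symmetries into a rigorous homological computation of $\langle h, \tilde c\rangle$ and $\langle v, \tilde c\rangle$ for each lift $\tilde c$ to $X$ of a core curve on $L_{a,b}$. This requires carefully tracking the action of $\tau_h, \tau_v$ on $H_1(X,\ZZ)$ near the Weierstrass points, together with the Poincaré duals of the cycles appearing in~\eqref{eq:cocycle}, to verify that outside the exceptional configurations at least one of these two pairings is nonzero on each lift. Once this homological translation of the geometric criterion is established, the existence of the strip decomposition follows immediately from the choice of direction via \Cref{thm:cylWei}.
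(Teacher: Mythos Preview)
Your overall strategy matches the paper's: apply \Cref{thm:cylWei} to a chosen pair of regular Weierstrass points of $L_{a,b}$, then argue that in the resulting periodic direction every cylinder lifts to a strip. However, your specific choice of the pair $\{A,B\}$ creates a genuine gap.

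If the direction you obtain is a two-cylinder direction, the second core curve passes through two of the three remaining regular Weierstrass points $\{C,E,F\}$. One of these pairs is $\{E,F\}$, and according to the very symmetry discussion you invoke, a cylinder whose core curve passes through $E$ and $F$ \emph{always} lifts to a closed curve on the $\ZZ^2$-cover --- this is not an ``alignment exception'' that can be dodged by replacing the initial cylinder with another one through $\{A,B\}$. Your proposal gives no reason why, among the infinitely many cylinders through $\{A,B\}$, at least one lies in a direction where the companion cylinder is \emph{not} through $\{E,F\}$; when the companion is through $\{E,F\}$ you do not get a strip decomposition.

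The paper sidesteps this by choosing the pair $\{E,C\}$ instead. The complementary set is then $\{A,B,F\}$, and each of its three pairs $\{A,B\}$, $\{A,F\}$, $\{B,F\}$ lies outside the list of exceptional configurations (no $\{E,F\}$, no $\{E,B\}$, no $\{F,C\}$), so the second cylinder automatically lifts to a strip with no further avoidance argument needed. The fix to your argument is simply to replace $\{A,B\}$ by $\{E,C\}$ (or, symmetrically, $\{F,B\}$); after that change your plan goes through exactly as the paper's proof does.
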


\begin{proof}
By \Cref{thm:cylWei}, there is a cylinder in $L = L_{a,b}$ whose core curve passes through $E$ and $C$.
By our discussion on the symmetries of the model, it is clear that such a cylinder lifts as a strip on $X_\infty$.
If this cylinder determines a one-cylinder decomposition of $L$, then we get a strip decomposition of $X_\infty$ and we are done.
Otherwise, it belongs to a two-cylinder decomposition.
But then, the core curve of the other cylinder passes either by $\{A,B\}$, $\{A,F\}$ or $\{B,F\}$.
Again, by the symmetries of the model, it lifts as a strip on $X_\infty$ in any case, and we  get a strip decomposition of $X_\infty$.
\end{proof}

\subsection{Proof of the geometric criterion \Cref{thm:geom}}
We need first the two following results.
\begin{Lemma} \label{lemm:ann}
The core curve $\gamma_C$ of a cylinder $C \subset X$ is in $\ann(E^{+})$ \emph{(resp. $\ann(E^{-})$)} if and only if it passes through some preimage of $E$ \emph{(resp. $F$)}.
\end{Lemma}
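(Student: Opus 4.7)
The $(\ZZ/2)^2$-action of $\langle\tau_h,\tau_v\rangle$ splits both $H^1(X,\RR)$ and $H_1(X,\RR)$ into four isotypic pieces; the natural pairing between them is Galois-equivariant, so $\ann(E^+)=\ann(E^{+-})=H_1^{++}\oplus H_1^{-+}\oplus H_1^{--}$. Equivalently, $\gamma\in\ann(E^+)$ if and only if its projection to $H_1^{+-}$, namely the class $(1+\tau_{h*})(1-\tau_{v*})\gamma$, vanishes in $H_1(X,\RR)$. This reduces the lemma to controlling that specific projection of $\gamma_C$.

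\textbf{Geometric input.} Because $L=X/\langle\tau_h,\tau_v\rangle\in\H(2)$ is hyperelliptic, its involution $\iota_L$ preserves every cylinder of $L$, reverses each core curve, and fixes exactly two of the six Weierstrass points on each core curve. Since $\iota_L$ acts trivially on $H_1(L,\ZZ/2)$ it lifts to $\Aut(X)$, giving a coset of four lifts $\{\iota,\iota\tau_h,\iota\tau_v,\iota\tau_h\tau_v\}$. The wind-tree symmetry discussion preceding the lemma matches each of the five regular Weierstrass points with the lift that stabilizes its preimages: preimages of $A,B,C$ (central symmetries) are fixed by $\iota$, preimages of $E$ (vertical axial symmetry) by $\iota\tau_v$, and preimages of $F$ (horizontal axial symmetry) by $\iota\tau_h$. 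For any cylinder $C\subset X$ there is a unique lift $\tilde\iota_C$ preserving $C$, acting as rotation by $\pi$ at a fixed point on $\gamma_C$; hence $(\tilde\iota_C)_*\gamma_C=-\gamma_C$, and the two fixed points of $\tilde\iota_C$ on $\gamma_C$ project to the two Weierstrass points of $\pi(\gamma_C)$.

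\textbf{Algebraic conclusion.} Combining the two parts, $\gamma_C$ passes through a preimage of $E$ if and only if $\tilde\iota_C=\iota\tau_v$. In that case, I rewrite $(\iota\tau_v)_*\gamma_C=-\gamma_C$ as $\iota_*\gamma_C=-\tau_{v*}\gamma_C$, apply $(1+\tau_{h*})$, and invoke $[\iota_*,\tau_{h*}]=0$ together with the action of $\iota_*$ as $-1$ on the $\tau_h$-invariant subspace $H_1^{++}\oplus H_1^{+-}$ (itself inherited from $\iota$ descending to a hyperelliptic-like involution on the intermediate cover $X/\tau_h$). The resulting identity collapses to $(1+\tau_{h*})(1-\tau_{v*})\gamma_C=0$, proving $\gamma_C\in\ann(E^+)$. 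The converse is obtained by enumerating the remaining three possibilities for $\tilde\iota_C$ and showing that each yields a nontrivial $H_1^{+-}$-component. The $\ann(E^-)$/preimage of $F$ statement is the mirror assertion obtained by exchanging the roles of $\tau_h$ and $\tau_v$.

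\textbf{Main obstacle.} The most delicate points are the geometric identification of each regular Weierstrass point with its stabilizing lift of $\iota_L$ and the verification that $\iota_*$ acts as $-1$ on the full $\tau_h$-invariant subspace $H_1^{++}\oplus H_1^{+-}$. Both rely on the detailed wind-tree geometry developed earlier in the paper and in \cite{Pardo:counting}, including the case analysis for cylinders whose preimages in $X$ form fewer than four components.
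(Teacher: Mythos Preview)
The paper's proof is essentially a one-line citation of \cite[Proposition~6.3]{Pardo:counting} together with the observation that the core curve of any cylinder in $\H(2)$ passes through exactly two regular Weierstrass points. Your approach is genuinely different: you lift the hyperelliptic involution $\iota_L$ of $L$ to $X$ and exploit the relation $(\tilde\iota_C)_*\gamma_C=-\gamma_C$ combined with the action of the lifts on the isotypic decomposition. The forward implication is sound once the two facts you flag as obstacles are granted, and this route has the merit of explaining the result intrinsically rather than importing it.

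There is, however, a genuine gap in the converse. First, a minor point: the lift $\tilde\iota_C$ preserving $C$ is \emph{not} unique when $\bar C\subset L$ has only two preimage components in $X$ (nontrivial monodromy of the core curve); then two lifts preserve each component. This is harmless for the forward direction but matters for the case analysis. More seriously, the sketch ``enumerating the remaining three possibilities for $\tilde\iota_C$ and showing that each yields a nontrivial $H_1^{+-}$-component'' does not go through. Take $\tilde\iota_C=\iota$: under your hypothesis that $\iota_*=-1$ on $H_1^{++}\oplus H_1^{+-}$, the relation $\iota_*\gamma_C=-\gamma_C$ is automatically satisfied by the $(++)$ and $(+-)$ components and therefore imposes \emph{no} constraint on $\mathrm{proj}_{+-}\gamma_C$; it certainly cannot force it to be nonzero. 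The same degeneracy occurs for $\tilde\iota_C=\iota\tau_h$: after applying $(1+\tau_{h*})$ and commuting, one obtains the tautology $-(1+\tau_{h*})\gamma_C=-(1+\tau_{h*})\gamma_C$. So the identity $(\tilde\iota_C)_*\gamma_C=-\gamma_C$ alone cannot produce the nonvanishing you need; an additional ingredient (essentially the content packaged in the cited proposition of \cite{Pardo:counting}) is required to rule out $\gamma_C\in H_1^{++}$ in these cases.
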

\begin{proof}
By \cite[Proposition~6.3]{Pardo:counting}, the core curve $\gamma_C$ of a cylinder $C \subset X$ is in $\ann(E^{+})$ if and only if its projection in $L$ passes through at most one of the points $A,B,C,F$. In particular, since the core curve of any cylinder in a surface in $\H(2)$ passes through exactly two regular Weierstrass points, this happens if and only if $\gamma_C$ passes through some preimage of $E \in L$.
Similarly, $\gamma_C \in \ann(E^{-})$ if and only if $\gamma_C$ passes through some preimage of $F \in L$.
\end{proof}

\begin{Lemma}\label{lemm:twists}
Let $\gamma$ be the core curve of a cylinder in $L$ and $(\hat \gamma_i)_i$ be the lifts of $\gamma$ to $X$.
Then, the multi-twist defined (in cohomology) by $(\hat \gamma_i)_i$ and $\alpha \neq 0$ as
\begin{align*}
T \colon H^1(X) & \to H^1(X) \\ 
 f & \mapsto f + \alpha \sum_i f([\hat \gamma_i]) [\hat \gamma_i]^*
\end{align*}
acts trivially in $E^{+}$ \emph{(resp. $E^{-}$)} if and only if $E \in \gamma$ \emph{(resp. $F \in \gamma$)}.
\end{Lemma}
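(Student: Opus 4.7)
The plan is to translate the condition ``$T$ acts trivially on $E^{+}$'' into the statement that every lift $\hat\gamma_i$ has homology class in $\ann(E^{+})$, and then to combine this with \Cref{lemm:ann} and the geometry of the cover $\pi\colon X\to L$. The $E^{-}$ case will be handled identically with $F$ in place of $E$.

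First, I would observe that by the classical Dehn twist formula, $T$ acts on $H^1(X)$ as $f\mapsto f+\alpha\,\psi(f)$, where $\psi(f)\coloneqq \sum_i f([\hat\gamma_i])[\hat\gamma_i]^*$; consequently $T|_{E^{+}}=\mathrm{id}$ iff $\psi|_{E^{+}}\equiv 0$. The key step is then to prove that $\psi(f)=0$ iff $f([\hat\gamma_i])=0$ for every $i$. The ``if'' direction is tautological. For the converse, I would apply Poincaré duality to rewrite $\psi(f)=0$ as the homological identity $\sum_i f([\hat\gamma_i])[\hat\gamma_i]=0$ in $H_1(X)$, and then group the lifts by distinct homology class $[c_1],\dots,[c_k]$ with positive multiplicities $n_1,\dots,n_k$, yielding $\sum_j n_j f([c_j])[c_j]=0$.

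The main obstacle is that the distinct classes $[c_j]$ need not be linearly independent in $H_1(X)$ (two disjoint core curves can be homologous, or several can satisfy a global relation coming from a subsurface), so the equation above does not directly force $f([c_j])=0$. I would resolve this by a short positivity argument: setting $v_j\coloneqq f([c_j])$ and $C\colon\RR^k\to H_1(X)$, $e_j\mapsto [c_j]$, the vector $v=(v_j)$ lies in the image of $C^{\top}$, hence in $(\ker C)^{\perp}$, while the relation $CDv=0$ with $D=\mathrm{diag}(n_1,\dots,n_k)$ says $Dv\in\ker C$; pairing yields $\sum_j n_j v_j^2=0$, which forces all $v_j=0$ since the $n_j$ are positive.

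Combining these steps with \Cref{lemm:ann}, $T|_{E^{+}}=\mathrm{id}$ iff every $\hat\gamma_i$ passes through some preimage of $E$. To conclude, I would use that each $\hat\gamma_i$ is a connected component of $\pi^{-1}(\gamma)$, hence surjects onto $\gamma$ under $\pi$: so some $\hat\gamma_i$ contains a preimage of $E$ iff all of them do, iff $E\in\gamma$, which gives the required equivalence.
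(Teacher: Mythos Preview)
Your proof is correct and takes a genuinely different route from the paper's. The paper exploits the explicit $(\ZZ/2\ZZ)^2$-symmetry of the cover: since the lifts form a single $\langle\tau_h,\tau_v\rangle$-orbit and $E^{+}$ is the $(+1,-1)$-eigenspace, one computes directly that for $f\in E^{+}$ the sum collapses to $T(f)=f+\alpha\, n\, f([\hat\gamma])\,\pr_{E^{+}}([\hat\gamma]^*)$, and nontriviality is then read off from $\pr_{E^{+}}([\hat\gamma]^*)\neq 0$ via \Cref{lemm:ann}. Your positivity argument --- $v=C^\top f\in(\ker C)^\perp$ and $Dv\in\ker C$ force $\langle v,Dv\rangle=\sum_j n_jv_j^2=0$ --- is a general fact about multi-twists along any family of disjoint curves and uses no symmetry whatsoever; it is arguably cleaner, and in fact the grouping by homology class is unnecessary (working directly with all lifts, $D=\mathrm{id}$ already gives $v\in\ker C\cap(\ker C)^\perp=\{0\}$). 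The paper's approach has the side benefit of producing the explicit formula $T(f)=f+\alpha\, n\,\langle f,f_0\rangle f_0$ with $f_0=\pr_{E^{+}}([\hat\gamma]^*)$, which is reused verbatim in the two-cylinder analysis of \Cref{thm:geom}; your route proves the lemma equally well but would require deriving that formula separately if you follow the paper's later argument.
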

\begin{proof}
If $E \in \gamma$, by \Cref{lemm:ann}, $\hat \gamma_i \in \ann(E^{+})$ and $T$ acts trivially in $E^{+}$.

For the converse, let $n$ be the number of lifts of $\gamma$ to $X$
and $\hat \gamma \subset X$ be one of those lifts.
By \Cref{lemm:ann} again, if $E \notin \gamma$, then there exists $f \in E^{+}$ such that $f([\hat \gamma]) \neq 0$.
Moreover, it is clear that $\{\hat \gamma_i\}_i = \langle \tau_h, \tau_v\rangle . \hat \gamma$ and $\# \stab_{\langle \tau_h, \tau_v\rangle}(\hat \gamma) = 4/n$.
Using this and the fact that $f \in E^{+}$, it follows that
\begin{align*}
T(f)
 & = f + \alpha \frac{n}{4}\big( f([\hat \gamma]) [\hat \gamma]^* + f([\tau_h \hat \gamma]) [\tau_h \hat \gamma]^* + f([\tau_v \hat \gamma]) [\tau_v \hat \gamma]^* + f([\tau_h \tau_v \hat \gamma]) [\tau_h \tau_v \hat \gamma]^*\big) \\
 & = f + \alpha nf([\hat \gamma]) \frac{1}{4}\big([\hat \gamma]^* + [\tau_h \hat \gamma]^* - [\tau_v \hat \gamma]^* - [\tau_h \tau_v \hat \gamma]^*\big) \\
 & = f + \alpha nf([\hat \gamma]) \pr_{E^{+}}([\hat \gamma]^*).
\end{align*}
But $\hat \gamma \notin \ann(E^{+})$, $\pr_{E^{+}}([\hat \gamma]^*) \neq 0$ and, by the choice of $f \in E^{+}$, $f([\hat \gamma]) \neq 0$.
It follows that $T(f) \neq f$ and, thus, $T$ does not act trivially in $E^{+}$.
\end{proof}

\begin{proof}[Proof of \Cref{thm:geom}]
Let $p \in \Gamma$ be a parabolic element.
Then $p$ acts as an affine multi-twist $T_p$ along a cylinder decomposition in $X$.
That cylinder decomposition descends to $L \in \H(2)$.

Suppose that we get a one-cylinder decomposition in $L$, say by $C$. Then, the multi-twist $T_p$ in $X$ acts in cohomolgy as the multi-twist defined in \Cref{lemm:twists}, with $\alpha = 1$ as every lift of the one-cylinder in $L$ has the same modulus.
By \Cref{lemm:twists}, this action is trivial if and only if $E \in \gamma$, the core curve of $C$.
That is to say, for one-cylinder decompositions in $L$, $p \in K^{+}$ if and only if $E$ is in the core curve of the one-cylinder.

Suppose now that we get a two-cylinder decomposition in $L$, say by $C_i$, $i=0,1$.
Then, (in cohomology) the multi-twist $T_p$ in $X$ is the product of two (commutative) multi-twist as in \Cref{lemm:twists}, say $T_i$, corresponding to a twist along the core curve of $C_i$, with appropriate $\alpha_i > 0$, for $i=0,1$. Thus $T_p = T_1 T_0 = T_0T_1$.
\begin{itemize}
\item Suppose $E$ is in the core curve of one of the two cylinders, say in $C_0$.
Then, by \Cref{lemm:twists}, $T_0$ acts trivially in $E^{+}$ and $T_p|_{E^{+}} = T_1|_{E^{+}}$ which is non-trivial since $E$ cannot be in the core curve of $C_1$ as well.
In particular, $p \notin K^{+}$.
\item Suppose $E$ is not in any of the core curves $\gamma^{(i)}$ of $C_i$, $i=0,1$.
It follows, by \Cref{lemm:ann}, that any lift $\hat \gamma^{(i)}$ of $\gamma^{(i)}$ to $X$ does not belong to $\ann(E^{+})$, that is, $f_i = \pr_{E^{+}}[\hat \gamma^{(i)}]^* \neq 0$, $f_i \in E^{+}$, $i=0,1$.
Note that for $f \in E^{+}$, we have $f([\hat\gamma^{(i)}]) = \langle f, f_i \rangle$. Then, analogously to the proof of \Cref{lemm:twists} we get that, for $f \in E^{+}$,
\[ T_p(f) = T_1 T_0(f) = f + \alpha_0 n_0 \langle f, f_0 \rangle f_0 + \alpha_1 n_1 \langle f, f_1 \rangle f_1.\]
\begin{itemize}
\item If $f_0$ and $f_1$ are not collinear, since $E^{+}$ is symplectic and $\dim E^{+} = 2$, we get that $\langle f_1, f_0 \rangle \neq 0$.
Then, $T_p(f_1) = f_1 + \alpha_1 n_1 \langle f_1, f_0 \rangle f_0 \neq f_1$.
\item If $f_0$ and $f_1$ are collinear, consider $0 \neq f = \lambda_0 f_0 = \lambda_1 f_1 \in E^{+}$. Since $E^{+}$ is symplectic, it follows that there exists $f' \in E^{+}$ with $\langle f, f' \rangle \neq 0$. Thus,
\[ T_p(f') 
 = f' + \left(\frac{\alpha_0 n_0}{\lambda_0^2} + \frac{\alpha_1 n_1}{\lambda_1^2} \right) \langle f, f' \rangle f.\]
But $\alpha_i > 0$ and $n_i \in \N$,
for $i=0,1$.
It follows that $T_p(f') \neq f'$.
\end{itemize}
Thus, in both cases, $T_p$ does not act trivially in $E^{+}$ and therefore, $p \notin K^{+}$.
\end{itemize}
It follows that for two-cylinder decompositions in $L$, $p$ is never in $K^{+}$.
\end{proof}

%************************************************************
%************************************************************
%************************************************************

\section{Endpoints} 
\label{sec:endpoints}

In this section we conclude the proof of \Cref{thm:main} by showing that $(0,0) \in \Lambda$ and $(1,1) \in \overline\Lambda$.
We will first establish a result on the kernel of the representations $\rho^{+}$ and $\rho^{-}$.

\begin{Theorem}
\label{thm:kernel}
Let $H_1(X,\RR)=E\oplus W$ be a virtually $\mathrm{Aff}(X,\omega)$-invariant splitting
such that $H_{st}^1(X,\omega) \subset E$.
If $\mathrm{rank}(W)=2$,
then the representation $\rho \colon \Gamma\rightarrow \Aut(W)$
is not faithful.
Moreover, the limit set of the Fuchsian group $\ker(\rho)$ is the full circle at infinity.
\end{Theorem}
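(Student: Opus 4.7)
The statement splits into two parts: showing that $\rho$ has non-trivial kernel, and showing that this kernel is large enough to have $S^1$ as limit set.

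\emph{Part 1 (non-faithfulness).} I would argue by contradiction. Because the splitting is virtually $\mathrm{Aff}(X,\omega)$-invariant and $W$ lies in the symplectic-orthogonal of $H_{st}^1 \subset E$, the symplectic form of the KZ cocycle restricts non-degenerately to the rank-$2$ subspace $W$; hence $\mathrm{Aut}(W)\cong\mathrm{SL}(2,\mathbb R)$ and the suspension of $\rho$ coincides with the restriction of the KZ cocycle to the $W$-subbundle. By Forni's theorem, the positive Lyapunov exponent $\lambda_W$ of this restricted cocycle, with respect to the Masur--Veech measure, satisfies $\lambda_W < 1$ strictly, since the exponent $+1$ of the full KZ cocycle is realized only on the tautological plane $H_{st}^1$, which is contained in $E$ and thus disjoint from $W$. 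If $\rho$ were faithful, $\rho(\Gamma)$ would be a subgroup of $\mathrm{SL}(2,\mathbb R)$ isomorphic to the lattice $\Gamma$. I would then invoke rigidity of such embeddings to conclude that, up to passing to a finite-index subgroup, $\rho$ is conjugate to a Fuchsian embedding whose suspension cocycle has Lyapunov exponent equal to $1$ (by comparison with the tautological cocycle, or via a direct pseudo-Anosov computation using \eqref{eq:pA} and the Eskin--Kontsevich--Zorich-style sum formula). This would contradict $\lambda_W < 1$.

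\emph{Part 2 (full limit set).} Once $\ker\rho$ is non-trivial, I would apply the following classical fact from Fuchsian dynamics: any non-central normal subgroup $N$ of a non-elementary Fuchsian group $\Gamma$ satisfies $\Lambda(N) = \Lambda(\Gamma)$. Indeed, pick a non-central $n \in \ker\rho$; if $n$ is loxodromic or parabolic with fixed point $\xi \in \partial\mathbb H$, then for each $\gamma \in \Gamma$ the conjugate $\gamma n \gamma^{-1} \in \ker\rho$ fixes $\gamma\cdot\xi$, so $\overline{\Gamma\cdot\xi} \subset \Lambda(\ker\rho)$. Since $\Gamma$ is a lattice in $\mathrm{PSL}(2,\mathbb R)$, every $\Gamma$-orbit on $\partial\mathbb H \simeq S^1$ is dense, giving $\Lambda(\ker\rho) = S^1$. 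The case where only elliptic elements lie in $\ker\rho$ is reduced to the previous one by taking a commutator $[n,\gamma] \in \ker\rho$, which is loxodromic for a suitably generic $\gamma \in \Gamma$.

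\emph{Main obstacle.} The technical heart is Part~1: reconciling the strict inequality $\lambda_W < 1$ with the assumption that $\rho$ is faithful. A sharp rigidity statement for $2$-dimensional $\mathrm{Aff}(X,\omega)$-invariant subbundles is needed; failing that, one could attempt a direct geometric construction of kernel elements by exhibiting parabolic multi-twists annihilating $W$, in the spirit of the criterion of \Cref{sec:parabolic}, thereby producing the required element of $\ker\rho$ by hand and bypassing the rigidity route altogether.
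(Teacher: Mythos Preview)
The paper does not spell out an argument here; it defers entirely to \cite[Theorems~5.5 and~5.6]{Hopper:Weiss}. Your Part~2 is correct and is precisely the standard mechanism: once $\ker\rho$ contains a non-central element, it is a non-trivial normal subgroup of the non-elementary Fuchsian group $\Gamma$ and therefore inherits the full limit set (this is \cite[Lemma~5.4]{Hopper:Weiss}, which the paper invokes verbatim in the proof of \Cref{thm:main}).

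Part~1, however, has a real gap---the one you yourself flag as the ``main obstacle''---and it is not merely technical. There is \emph{no} Mostow-type rigidity for lattices in $\PSL(2,\RR)$: a non-uniform lattice is virtually free and carries a positive-dimensional moduli of faithful (discrete and indiscrete) representations into $\SL(2,\RR)$, none of which need be conjugate to the tautological inclusion. Faithfulness of $\rho$ therefore does not force the suspended cocycle to have exponent~$1$, and the contradiction with $\lambda_W<1$ cannot be closed along this route. Neither parenthetical alternative helps: an Eskin--Kontsevich--Zorich-type sum constrains only the total of the exponents over $H^1$, not the exponent on a single rank-$2$ piece, and the computation via~\eqref{eq:pA} already gives $\lambda<1$ for every pseudo-Anosov regardless of whether $\rho$ is faithful, so it produces no contradiction. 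The argument in \cite{Hopper:Weiss} does exploit $H^1_{st}\cap W=\{0\}$, but through Hodge theory rather than abstract group rigidity: the weight-$1$ VHS carried by $W$ yields a $\rho$-equivariant holomorphic period map $\HH\to\HH$, which by Schwarz--Pick is a strict contraction (an isometry only when $W$ is the tautological plane); playing this contraction against the finite covolume of $\Gamma$ and the discreteness of $\rho(\Gamma)$ is what forces $\ker\rho$ to be infinite. Your fallback of manufacturing kernel elements by hand via the parabolic criterion of \Cref{sec:parabolic} succeeds for the specific wind-tree surfaces treated in this paper but does not establish the theorem in the generality stated.
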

\begin{proof}
The proof follows the same line of ideas as~\cite[Theorem 5.5 and Theorem 5.6]{Hopper:Weiss}.
\end{proof}

Since the representation $\rho^{+}$ (resp. $\rho^{-}$) takes values in $\Aut(E^{+})=\SL(2,\ZZ)$ (resp. $\Aut(E^{-})=\SL(2,\ZZ)$) (see \Cref{sec:dim} and~\eqref{eq:splitting}), we have the following.
\begin{Corollary}
\label{cor:not:faithful}
The limit sets of $\ker(\rho^{+})\subset \Gamma$ and $\ker(\rho^{-})\subset \Gamma$ is the full circle at infinity.
\end{Corollary}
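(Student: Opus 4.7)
The plan is to invoke \Cref{thm:kernel} twice, once for each of the two representations $\rho^{+}$ and $\rho^{-}$, by choosing appropriate $\Aff(X,\omega)$-virtually invariant splittings of $H^1(X,\RR)$.

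First, I would recall the ingredients from \Cref{sec:dim}. The symmetries $\tau_h,\tau_v\in \Aut(X,\omega)$ produce the flat symplectic decomposition
\[
H^1(X,\RR) = E^{++} \oplus E^{+} \oplus E^{-} \oplus E^{--}
\]
of equation~\eqref{eq:splitting}, with the conventions $E^{+}=E^{+-}$, $E^{-}=E^{-+}$. Three points will be used: the splitting is preserved by a finite-index subgroup of $\Aff(X,\omega)$, that is, it is virtually $\Aff(X,\omega)$-invariant; the tautological plane satisfies $H_{st}^1(X,\omega)\subset E^{++}$; and each of the three subbundles $E^{+}$, $E^{-}$, $E^{--}$ has real rank $2$ (while $E^{++}$ has rank $4$).

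For the first application, I would set $W = E^{+}$ and $E = E^{++} \oplus E^{-} \oplus E^{--}$. Then $H^1(X,\RR) = E \oplus W$ is a virtually $\Aff(X,\omega)$-invariant splitting with $H_{st}^1(X,\omega) \subset E^{++} \subset E$ and $\rank(W) = 2$, so all hypotheses of \Cref{thm:kernel} are met. The resulting representation $\Gamma_0 \to \Aut(W) = \Aut(E^{+})$ coincides by construction with $\rho^{+}$ as defined in \Cref{sec:interval}, and \Cref{thm:kernel} yields that $\ker(\rho^{+})$ is a Fuchsian group whose limit set is the full circle at infinity. The argument for $\rho^{-}$ is symmetric: take $W = E^{-}$ and $E = E^{++} \oplus E^{+} \oplus E^{--}$, and conclude in the same way that $\ker(\rho^{-})$ also has full limit set.

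There is no real obstacle here: the proof is a bookkeeping exercise once \Cref{thm:kernel} is in hand. The only care needed is to match the representation produced by \Cref{thm:kernel} applied to each splitting with the one defined in \Cref{sec:interval}, which is immediate since both act as the cohomological action of the affine group restricted to the corresponding $2$-dimensional subbundle.
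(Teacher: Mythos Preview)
Your proposal is correct and matches the paper's approach exactly. The paper does not even write out an explicit proof for this corollary: it simply states it as an immediate consequence of \Cref{thm:kernel}, prefaced by the remark that $\rho^{\pm}$ takes values in $\Aut(E^{\pm})$, which is precisely the verification you spell out in detail.
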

\begin{proof}[Proof of \Cref{thm:main}]
By \Cref{thm:equality:exp:diff} we have that the set
$$\Lambda^\max=\{\max\{\lambda^{+}(\mu),\lambda^{-}(\mu)\}, \; \mu \in \mathcal M\}$$
satisfies $\Lambda^\max\subset\left\{\delta_\theta; \ \theta\in \mathcal D \right\}$.
The representations $\rho^{+}$ and $\rho^{-}$ of the wind-tree model are irreducible (see, \eg, \cite[Section~4.2.1]{Pardo:non-varying}, where we show that their images are non-elementary and, in particular, strongly irreducible and Zariski dense). 
Thus, by \Cref{t.main2}, the set
$$\Lambda = \{(\lambda^{+}(\mu),\lambda^{-}(\mu)), \; \mu \in \mathcal M\}$$
has convex relative interior and therefore $\Lambda^\max$ is an interval.
\Cref{rk:top} shows that $1\not \in \Lambda^\max$.
Let us prove that $\Lambda^\max=[0,1)$.

\begin{description}
\item[\hypertarget{left-end}{Left endpoint} ($0\in \Lambda^\max$)]
By \Cref{cor:not:faithful}, let $\gamma^{+}\in K^{+}=\ker(\rho^{+})$ be any hyperbolic element. Let $\phi\in \mathrm{Aff}(X,\omega)$
be a pseudo-Anosov map with $D\phi=\gamma^{+}$. The maximal eigenvalues of the action
of $\phi^\ast$ on $E^{++}$ and $E^{+}$ are $\theta_1$ and $1$, respectively
(since $\gamma^{+}$ belongs to $\ker(\rho^{+})$, the action of $\phi^\ast$ restricted to $E^+$ is trivial).
By \cref{eq:pA}, the top Lyapunov exponent of the KZ cocycle restricted to $E^{+}$
associated to this periodic orbit is simply
$$
\lambda^{+} = \frac{\log(1)}{\log(|\theta_1|)} = 0.
$$
Similarly for any hyperbolic element $\gamma^{-}\in K^{-}=\ker(\rho^{-})$, one has $\lambda^{-}=0$.

Now, since $\Gamma$ is Fuchsian, any non-trivial normal subgroup has the same limit set (see \eg~\cite[Lemma~5.4]{Hopper:Weiss}).
Thus, it is enough to prove that $K\coloneqq\ker(\rho)=K^{+}\cap K^{-}$ is non-trivial.
Since in any Fuchsian group, the set of points in the boundary of $\HH$ fixed by an element of the group is dense in the limit set, we can find elements $\gamma^{\pm} \in K^{\pm}$ that fix different points in the limit set. In particular, $\gamma^{+}$ and $\gamma^{-}$ do not commute. It follows that $\mathrm{id} \neq [\gamma^{+},\gamma^{-}] \in K^{+} \cap K^{-}$ and $K$ is not trivial (\cf~\cite[Theorem~2]{Pardo:remark}). Hence, by the previous discussion, $(0,0)\in \Lambda$ and $0\in \Lambda^\max$.

\item[\hypertarget{right-end}{Right endpoint} ($1\in \overline{\Lambda^\max}$)]
For a matrix $\left(\begin{smallmatrix} a & b \\ c & d \end{smallmatrix}\right) \in \Gamma$, we denote by $\left(\begin{smallmatrix} a^{\pm} & b^{\pm} \\ c^{\pm} & d^{\pm} \end{smallmatrix}\right) \in \rho^{\pm}(\Gamma)$ its image by the representation $\rho^{\pm}$.
Let $\Gamma_0$ be the finite index subgroup introduced in \Cref{sec:interval}.
By \Cref{c:parabolic-in-complement}, we can take $\gamma\in \Gamma$ a parabolic element in $\Gamma_0$ that is not in either of the kernels $K^{\pm}$.
Up to conjugacy, we can assume that $\gamma=\left(\begin{smallmatrix} 1 & t \\ 0 & 1 \end{smallmatrix}\right)$ and $\rho^{\pm}(\gamma)=\left(\begin{smallmatrix} 1 & t^{\pm} \\ 0 & 1 \end{smallmatrix}\right)$, with $t,t^{\pm}\neq 0$. Now let $\gamma'=\left(\begin{smallmatrix} a & b \\ c & d \end{smallmatrix}\right) \in \Gamma_0$ hyperbolic such that $\rho^{\pm}(\gamma')$ is also hyperbolic. Since $\Gamma_0$ is discrete, $\gamma$ and $\gamma'$ do not fix the same point at infinity, and the same is true for $\rho^{\pm}(\gamma)$ and $\rho^{\pm}(\gamma')$ (indeed, the images of $\Gamma_0$ by $\rho^{\pm}$ belong to $\mathrm{Aut}(E^{\pm}(\ZZ))= \mathrm{SL}(2,\ZZ)$ so they are discrete).
It follows that $c,c^{\pm}\neq 0$.

From the equality $\tr(\gamma^n\gamma')=ntc+\tr(\gamma')$, we deduce that $\gamma^n\gamma'$ is hyperbolic for $n$ large enough.
In particular, the top Lyapunov exponent (on the bundle $E^{\pm} \subset H^1(X,\mathbb R)$) associated to this periodic orbit is given by \cref{eq:pA}:
$$
\lambda^{\pm}_n = \frac{\log(nt^{\pm}c^{\pm}+\tr(\rho^{\pm}(\gamma')))}{\log(ntc+\tr(\gamma'))} + o(1) = 1+ o(1).
$$
Thus, $(\lambda^{-}_n,\lambda^{+}_n) = (1,1) + o(1) \in \Lambda$.
In particular,  $1+ o(1) \in \Lambda^\max$ and hence $1 \in \overline{\Lambda^\max}$.
\end{description}

It follows that $\Lambda^\max = [0,1) \subset \left\{\delta_\theta; \ \theta\in \mathcal D \right\}$.

Now, recall that $\mathcal D \subset \mathbb S^1$ is the full measure set of directions $\theta$ such that $\delta_\theta(x)$ is constant for almost every $x \in T(a, b)$.

To conclude, we need to show that $1$ belongs to $\left\{\delta_\theta; \ \theta\in \mathcal D \right\}$.
But this follows from \Cref{c:strip-decomposition}.
In fact, given a strip decomposition in a direction $\theta \in \mathbb S^1$, it is clear that $\theta\in \mathcal D$ and $\delta_\theta = 1$ as any trajectory inside a strip scapes linearly.
\end{proof}

%********************************************************************************************************
%********************************************************************************************************
%********************************************************************************************************
\section{Joint diffusion}
\label{sec:square}

In this section we prove \Cref{thm:square} exhibiting an infinite family of wind-tree billiards
for which the Lyapunov spectrum contains the full square $(0,1)^2$
and thus, that exhibit all possible joint diffusion rates.
To our knowledge, this is a phenomenon that has not been previously exhibited, even for general Fuchsian groups.

In order to prove \Cref{thm:square}, we use the following.
\begin{Proposition}
\label{banane:3}
Suppose that there are parabolic elements in $\ker(\rho^{+}) \setminus \ker(\rho^{-})$ and in $\ker(\rho^{-}) \setminus \ker(\rho^{+})$.
Then, the Lyapunov spectrum $\Lambda$ contains $(0,1)^2$.
\end{Proposition}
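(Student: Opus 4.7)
Let $\gamma^+\in\ker(\rho^+)\setminus\ker(\rho^-)$ and $\gamma^-\in\ker(\rho^-)\setminus\ker(\rho^+)$ be the parabolic elements given by hypothesis. Up to replacing them by suitable powers, we may assume $\gamma^\pm\in\Gamma_0$. The plan is to produce periodic orbits in $\Maff^{(1)}$ whose Lyapunov exponent pairs accumulate on each of the four corners $(0,0), (0,1), (1,0), (1,1)$ of the unit square, and then use the convexity of the interior of $\Lambda$ provided by \Cref{t.main2}.

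First I would adapt the right-endpoint argument from the proof of \Cref{thm:main}. Let $\gamma'=\left(\begin{smallmatrix} a & b \\ c & d \end{smallmatrix}\right)\in\Gamma_0$ be a hyperbolic element whose images $\rho^\pm(\gamma')$ are hyperbolic and share no fixed point at infinity with $\rho^\pm(\gamma^+)$ or $\rho^\pm(\gamma^-)$; such a $\gamma'$ exists since $\Gamma_0$ and its images are discrete and non-elementary. Conjugating so that $\gamma^+=\left(\begin{smallmatrix} 1 & t \\ 0 & 1 \end{smallmatrix}\right)$ and $\rho^-(\gamma^+)=\left(\begin{smallmatrix} 1 & t^- \\ 0 & 1 \end{smallmatrix}\right)$ with $t,t^-\neq 0$ (while $\rho^+(\gamma^+)=\mathrm{Id}$), the trace identity
\[
\tr((\gamma^+)^n\gamma')=ntc+\tr(\gamma'),\qquad \tr(\rho^-((\gamma^+)^n\gamma'))=nt^-c^-+\tr(\rho^-(\gamma'))
\]
shows that $(\gamma^+)^n\gamma'$ is hyperbolic for $n$ large and gives, via \eqref{eq:pA}, Lyapunov exponents
\[
\lambda^+_n=\frac{\log|\tr(\rho^+(\gamma'))|}{\log|ntc+\tr(\gamma')|}+o(1)\to 0,\qquad
\lambda^-_n=\frac{\log|nt^-c^-+\tr(\rho^-(\gamma'))|}{\log|ntc+\tr(\gamma')|}+o(1)\to 1.
\]
The symmetric construction with $\gamma^-$ in place of $\gamma^+$ yields periodic orbits with $(\lambda^+_n,\lambda^-_n)\to(1,0)$. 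Combined with the two corners already established in the proof of \Cref{thm:main}, namely $(0,0)\in\Lambda$ (using a hyperbolic element of $\ker(\rho^+)\cap\ker(\rho^-)$) and $(1,1)\in\overline\Lambda$ (using \Cref{c:parabolic-in-complement}), we conclude that $\overline\Lambda$ contains all four corners of $[0,1]^2$.

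Finally I would invoke convexity. Since the four corners are affinely independent, the smallest affine subspace $L\subset\RR^2$ containing $\Lambda$ is $\RR^2$. The representations $\rho^+$ and $\rho^-$ are irreducible (as recalled in the proof of \Cref{thm:main}), so \Cref{t.main2} applies and the interior $U\coloneqq\mathrm{int}(\Lambda)\subset\RR^2$ is convex and dense in $\Lambda$. Hence $\overline U=\overline\Lambda$ contains the four corners, and being convex it contains the full closed square $[0,1]^2$. Since $U$ is an open convex set with nonempty interior, $\mathrm{int}(\overline U)=U$, so $U\supset\mathrm{int}([0,1]^2)=(0,1)^2$, which yields $(0,1)^2\subset\Lambda$.

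The main obstacle I anticipate is the verification that $\rho^-(\gamma^+)$ (respectively $\rho^+(\gamma^-)$) is parabolic rather than merely non-trivial, so that the crucial linear growth $\tr(\rho^-((\gamma^+)^n\gamma'))\sim n t^-c^-$ is available. This should follow from the fact that $\rho^\pm$ take values in $\mathrm{SL}(2,\ZZ)$ together with $\gamma^+$ being parabolic (a non-trivial element in $\mathrm{SL}(2,\ZZ)$ having $\rho^-(\gamma^+)^n$ for all $n\geq 1$ of trace bounded in absolute value would have to be elliptic of finite order, contradicting the fact that $\gamma^+$ has infinite order and the image cannot be forced to be torsion). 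Once this point is in hand, the rest of the argument is essentially a packaging of the trace estimate used for the right endpoint in \Cref{thm:main} together with the convexity statement of \Cref{t.main2}.
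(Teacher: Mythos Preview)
Your approach is essentially the same as the paper's, with one small variation worth noting. To reach the corner $(1,0)$ (in your labeling, $(0,1)$), the paper takes \emph{both} the parabolic $\gamma$ and the hyperbolic $\gamma'$ inside $\ker(\rho^{-})$, so that $\gamma_n=\gamma^n\gamma'\in\ker(\rho^{-})$ and hence $\lambda^{-}_n=0$ exactly, while the trace computation on the $\rho^{+}$ side gives $\lambda^{+}_n\to 1$. You instead keep $\gamma'$ generic; since $\rho^{+}(\gamma^{+})=\mathrm{Id}$ one has $\rho^{+}((\gamma^{+})^n\gamma')=\rho^{+}(\gamma')$ fixed while the period $\sim\log n$, forcing $\lambda^{+}_n\to 0$. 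Both routes work, and the convexity endgame via \Cref{t.main2} is identical.

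The one genuine issue is the obstacle you flag yourself. Your proposed justification that $\rho^{-}(\gamma^{+})$ is parabolic does not hold up: the fact that $\gamma^{+}$ has infinite order does \emph{not} prevent its image under a representation from being torsion. If $\rho^{-}(\gamma^{+})$ were elliptic of order $k$ in $\SL(2,\ZZ)$, then $(\gamma^{+})^{k}\in\ker(\rho^{+})\cap\ker(\rho^{-})$, which is perfectly compatible with the hypotheses and with $K=\ker(\rho^{+})\cap\ker(\rho^{-})$ being nontrivial. The correct argument is geometric: a parabolic element of the Veech group acts on $X$ as an affine multi-twist along a cylinder decomposition, and the induced action on $H^{1}(X,\RR)$ is therefore unipotent (a product of commuting transvections). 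Restricting to the invariant $2$-plane $E^{-}$ gives a unipotent element of $\SL(2,\RR)$, hence either the identity or parabolic; since $\gamma^{+}\notin\ker(\rho^{-})$, it is parabolic. The paper uses exactly this fact (also without spelling it out) when writing $\rho^{\pm}(\gamma)=\left(\begin{smallmatrix}1&t^{\pm}\\0&1\end{smallmatrix}\right)$ in the right-endpoint argument. Once this is in hand, your trace computation and the rest of the proof go through.
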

\begin{proof}
In the proof of \Cref{thm:main}, we showed that $(0,0)\in \Lambda$ and $(1,1) \in \overline{\Lambda}$.
In order to prove that $(1,0)\in \overline{\Lambda}$, we follow the same strategy of the proof of \Cref{thm:main} (\hyperlink{right-end}{right endpoint}), but taking now $\gamma'\in \ker(\rho^{-})$ hyperbolic element with $\rho^{+}(\gamma')$ hyperbolic instead.
This is possible because $\ker(\rho^{-}) \setminus \ker(\rho^{+}) \neq \emptyset$.

More precisely, let $\gamma\in \ker(\rho^{-}) \setminus \ker(\rho^{+})$ be a parabolic element and $\gamma'\in \ker(\rho^{-}) \setminus \ker(\rho^{+})$ be a hyperbolic element such that $\rho^{+}(\gamma')$ is also hyperbolic.
Then, for $n$ large enough, the element $\gamma_n = \gamma^n\gamma'$ is hyperbolic and $\lambda^{+}_n = 1+ o(1)$, as in the proof of \Cref{thm:main} (\hyperlink{right-end}{right endpoint}).
However, in this case $\gamma_n \in \ker(\rho^{-})$ and therefore $\lambda_n^{-}=0$.
It follows that $(1,0)\in \overline{\Lambda}$.
And, by symmetry, $(0,1)\in \overline{\Lambda}$ as well.

By \Cref{rk:top}, $\Lambda \subset [0,1)^2$.
Then, by \Cref{t.main2}, $\Lambda \subset [0,1)^2$ has convex and dense interior and $\overline{\Lambda}$ contains the four corners of the square $[0,1]^2$.
In particular, $\{(0,0)\}\cup(0,1)^2 \subset \Lambda$.
\end{proof}

Hence, in order to prove \Cref{thm:square}, it is enough to show that $\ker(\rho^{+}) \setminus \ker(\rho^{-})$ and $\ker(\rho^{-}) \setminus \ker(\rho^{+})$ are non-empty and that there are parabolic elements in both differences.

By \Cref{thm:geom}, in order to classify parabolic elements in $K^{\pm} = \ker(\rho^{\pm})$, it is enough to understand one-cylinder directions in $L = L_{a,b} \in \H(2)$.

Recall that we are interested in parameters $(a,b)$ in the set
\[\esq = \left\{ \left(\frac{p}{q},\frac{r}{s}\right) \in (0,1)^2;\, \gcd(p,q) = \gcd(r,s) = 1, \ p,q,r,s \in 2\N-1\right\},\]
from \Cref{thm:square}.

As in the work of Hubert--Lelièvre--Troubetzkoy~\cite{Hubert:Lelievre:Troubetzkoy},
we first rescale $L$ in such a way that it is a surface tiled by $1\times 1$ squares, and distinguish the regular Weierstrass points $A,B,C,E,F$ by their projection into the squares, as in \Cref{fig:weven}.
In our particular case of $(a,b) \in \esq$, if $N \in \N$ is the number of squares, then $N = qs - pr$ is even and $N \geq 8$, where $a = p/q$ and $b = r/s$ are in lowest terms.
Moreover, the projection of the regular Weierstrass points $A,B,C,E,F$ into the unit squares is always as follows:
\begin{itemize}
\item $A$ projects to a corner,
\item $B$ and $E$ project to the center of a horizontal side, and
\item $C$ and $F$ project to the center of a vertical side.
\end{itemize}

\begin{figure}[ht]
\includegraphics[scale=0.5]{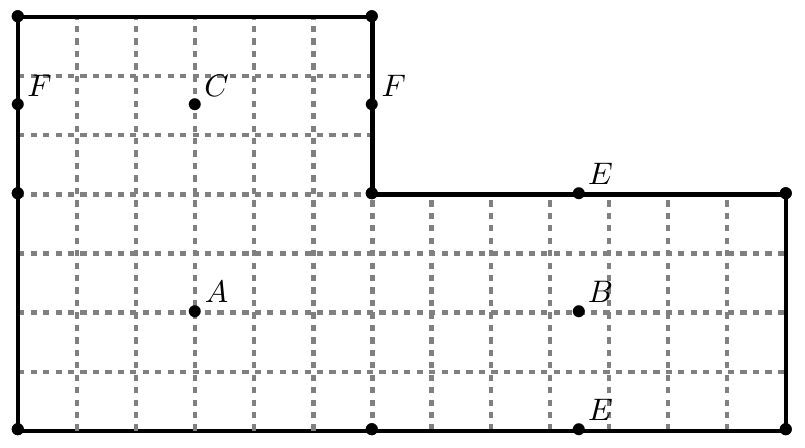}
\caption{The surface $L(3/7,7/15)$ after rescaling and its regular Weiestrass points $A,B,C,E,F$.}
\label{fig:weven}
\end{figure}

Now, given a one-cylinder decomposition of such a surface $L$, it can be rotated, rescaled and sheared in a way to obtain a horizontal cylinder of height one and length $N$, as in \Cref{fig:1cyl}.
Since $N$ is even, both Weierstrass points in the core curve projects to the same point in the unit square.
Thus, there are only two possible pairs of Weierstrass points in the same one-cylinder core curve, namely, $(B, E)$ or $(C,F)$, as shows the preceding list.

\begin{figure}[ht]
\includegraphics{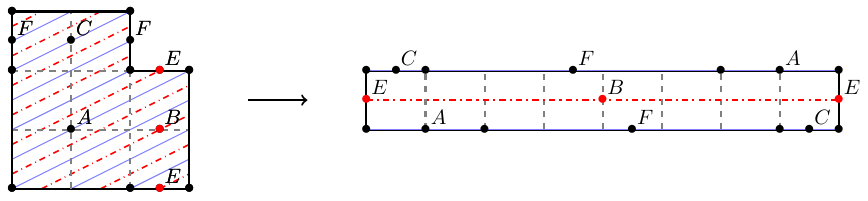}
\caption{A one-cylinder from $L(2/3,2/3)$ in direction with slope $1/2$, after rotating, rescaling and shearing.}
\label{fig:1cyl}
\end{figure}

By our geometric criterion of \Cref{thm:geom}, the previous remark proves that $K^{+}\mathrel{\triangle} K^{-} \neq \emptyset$, since there are always one-cylinder directions in the square-tiled case (see \cite[Proposition~5.1]{Hubert:Lelievre} or \cite[Corolary~A.2]{McMullen:discriminant-spin}).
However, in order to have elements in both differences we have to show that both combinations, $(B,E)$ and $(C,F)$, occur indeed.
To do this, we need the following.

\begin{Lemma}
\label{lemm:perm}
For every $(a,b) \in \esq$, there is $\phi \in \Aff(L_{a,b})$ such that its restriction to the regular Weierstrass points corresponds to the permutation $(B,C)(E,F)$.
\end{Lemma}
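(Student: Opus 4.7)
The goal is to exhibit an affine map $\phi \in \Aff(L_{a,b})$ whose action on the regular Weierstrass points is the permutation $(B,C)(E,F)$, that is, one that swaps the horizontal-type points $B, E$ with the vertical-type points $C, F$ while fixing the corner-type point $A$. The plan is to construct such a map explicitly using the square-tiled representation of $L_{a,b}$ together with the odd-parity conditions on $(p,q,r,s)$.

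First I would pass to the rescaled model: the map $\mathrm{diag}(q,s)$ identifies $L_{a,b}$ with a surface tiled by $N = qs - pr$ unit squares (which is even since $p,q,r,s$ are all odd, with $N \geq 8$ by direct inspection), having outer dimensions $q \times s$ and a $p \times r$ cutout. In this picture, the projection of the regular Weierstrass points to the unit squares is described explicitly as in the discussion preceding the lemma: $A$ sits at a corner, $B, E$ at the midpoints of horizontal edges, and $C, F$ at the midpoints of vertical edges. The natural candidate to realize the required permutation is then the map obtained by rotating by $\pi/2$ about the inner corner of the L and reassembling.

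The key difficulty is that this $\pi/2$-rotation sends the rescaled $L_{a,b}$ to an L-shape of dimensions $s \times q$ with an $r \times p$ cutout, which, for $a \neq b$, is a genuinely different translation surface. To obtain an element of $\Aff(L_{a,b})$, I would supplement the rotation by a translation-equivalence (that is, a cut-and-paste using only translations) identifying the rotated L-shape with the original one. The role of the assumption $(a,b) \in \esq$ is precisely to make this cut-and-paste possible: since $p,q,r,s$ are all odd, each of the four outer side-lengths $q, q-p, s, s-r$ has a well-defined parity that governs how unit-square strips can be detached from one side of the L and reglued to a perpendicular side by integer translations. An equivalent way to present the construction, which I would probably use, is to exhibit $\phi$ as the composition of the $\pi/2$-rotation around the inner corner with a carefully chosen parabolic affine multi-twist associated to a horizontal or vertical cylinder decomposition of $L_{a,b}$, thereby absorbing the difference between the rotated and original L-shapes into an explicit Dehn multi-twist whose action on Weierstrass points is well understood.

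Once $\phi$ is defined, the verification step consists of tracking the five regular Weierstrass points through the construction: the rotation about the inner corner fixes $A$ and carries midpoints of horizontal edges to midpoints of vertical edges, and the subsequent translation-type reassembly preserves these midpoint-projections and respects the labelling $\{B,E\}$ versus $\{C,F\}$ (as opposed to the labelling $\{B,C\}$ versus $\{E,F\}$, which would correspond to a different equivariance). The main obstacle is precisely this last point: producing the cut-and-paste (or, equivalently, identifying the right Veech-group element) that lands $\phi$ in $\Aff(L_{a,b})$ rather than giving only an affine isomorphism $L_{a,b} \to L_{b,a}$. The odd parity of $(p,q,r,s)$ is what makes this hyphenated step work, and is the reason the statement is restricted to the family $\esq$.
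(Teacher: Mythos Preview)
Your approach is genuinely different from the paper's, and it has a real gap at exactly the point you flag as ``the main obstacle.'' You never actually produce the translation-equivalence (or the parabolic correction) turning the rotated L-shape back into $L_{a,b}$; you only assert that the odd parity of $(p,q,r,s)$ ``makes this step work.'' But a cut-and-paste by translations alone would force $R_{\pi/2}\cdot L_{a,b}$ and $L_{a,b}$ to coincide as translation surfaces, i.e.\ would force $R_{\pi/2}$ to lie in the Veech group, which is false in general (already for $(a,b)=(1/3,1/5)$ the rotated L-shape has different arm lengths). Upgrading to ``compose with a carefully chosen parabolic'' is not obviously equivalent and in any case is left unspecified: you would need to exhibit an element of $\SL(L_{a,b})$ congruent to $\left(\begin{smallmatrix}0&1\\1&0\end{smallmatrix}\right)\bmod 2$, and nothing in your sketch does this.

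The paper avoids this entirely. It does not try to realize a rotation. Instead it invokes the existence (\Cref{thm:cylWei}) of a cylinder in $L_{a,b}$ whose core curve passes through $B$ and $C$; the parity description of Weierstrass-point projections for $(a,b)\in\esq$ forces this to sit in a two-cylinder direction, with the second core curve through two of $\{A,E,F\}$. The primitive affine multi-twist $T$ along this decomposition twists the two cylinders a coprime number of times, so at least one pair of core-curve Weierstrass points is swapped. A short torus-level parity argument (the derivative of $T$ acts on $2$-torsion of $\RR^2/\ZZ^2$, $A$ is the unique integral regular Weierstrass point, and swapping horizontal/vertical midpoints is all-or-nothing) then pins down $\{X,Y\}=\{E,F\}$ and shows that $T$ realizes exactly $(B,C)(E,F)$. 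So the desired $\phi$ is a parabolic multi-twist, not anything rotation-like, and the odd-parity hypothesis enters only through the projection pattern of the Weierstrass points.
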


\begin{proof}
Let $(a,b) \in \esq$ and $L = L_{a,b}$.
By \Cref{thm:cylWei}, there is a cylinder in $L$ such that its core curve passes through $B$ and $C$.
Since $(a,b) \in \esq$, this cylinder is part of a two-cylinder decomposition, call $X$ and $Y$ the regular Weierstrass points in the core curve of the second cylinder, $X,Y \in \{A,E,F\}$.

Let $T \in \mathrm{Aff}(L)$ be the (primitive) affine multi-twist corresponding to this two-cylinder decomposition.
Note that an affine transformation sends regular Weierstrass points to regular Weierstrass points and, in the primitive square-tiled case, respecting its projections into the square torus.
Moreover, $T$ does not fix all the regular Weierstrass points.
In fact, the Weierstrass points in the core curves of the corresponding cylinders are interchanged if and only if the curve appears an odd number of times in the affine multi-twist.
But the number of times that each core curve appears in the multi-twist are coprime.
So both cannot be even and at least one pair is exchanged.

It follows that $T$ transposes at least one of the pairs $(B,C)$ or $(X,Y)$.
\begin{itemize}
\item
If $T$ interchanges $B$ with $C$, then it has to interchange $E$ with $F$.
In fact, at the level of the square torus, it interchanges the center of a horizontal side with the center of a vertical side.
In other words, any affine transformation exchanges $B$ with $C$ if and only if it exchanges $E$ with $F$.
\item
If $T$ interchanges $X$ with $Y$, then necessarily $\{X,Y\} = \{E,F\}$.
In fact, $A$ is fixed by any affine transformation since it is the only integral regular Weierstrass point.
Thus, $T$ interchanges $E$ with $F$, and, by the same previous argument, it has to interchange $B$ with $C$.
\end{itemize}
It follows that $\{X,Y\} = \{E,F\}$ and $T$ interchanges both pairs $B,C$ and $E,F$.
\end{proof}

\begin{Remark}
Using similar ideas, Gutiérrez-Romo and the fourth named author give in \cite{Gutierrez-Romo:Pardo} a complete description of the action of the affine group on Weierstrass points for every Veech surface in $\H(2)$.
\end{Remark}

\begin{Corollary}
\label{coro:one-cyl}
For every $(a,b) \in \esq$, there are one-cylinder decompositions of $L_{a,b}$ such that the core curve of the cylinder passes through $B$ and $E$ \emph{(resp. $C$ and $F$)}.
\end{Corollary}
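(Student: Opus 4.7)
The plan is to combine the existence of at least one one-cylinder decomposition with the symmetry provided by \Cref{lemm:perm}.

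First, I would invoke the cited results of Hubert--Lelièvre~\cite[Proposition~5.1]{Hubert:Lelievre} or McMullen~\cite[Corollary~A.2]{McMullen:discriminant-spin} to guarantee the existence of at least one one-cylinder decomposition of $L = L_{a,b}$. By the discussion preceding the corollary (the paragraph around \Cref{fig:1cyl}), for $(a,b) \in \esq$ the surface $L$ is tiled by an even number $N = qs - pr \geq 8$ of unit squares, so after rotating, rescaling and shearing the one-cylinder to be horizontal of height one and length $N$, the two regular Weierstrass points lying on the core curve must project to the same point in the unit square. Consulting the list of projections ($A$ to a corner, $B$ and $E$ to the midpoint of a horizontal side, $C$ and $F$ to the midpoint of a vertical side), the only possibilities are the pairs $\{B,E\}$ or $\{C,F\}$.

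Thus, up to relabeling, we may assume that our one-cylinder decomposition has core curve passing through $B$ and $E$. To obtain a one-cylinder decomposition with core curve through $C$ and $F$, I would push the first one forward by the affine homeomorphism $\phi \in \Aff(L)$ produced by \Cref{lemm:perm}, whose action on the regular Weierstrass points is the permutation $(B,C)(E,F)$. Since affine homeomorphisms send cylinder decompositions to cylinder decompositions with the same number of cylinders, $\phi$ applied to the $\{B,E\}$-decomposition yields a new one-cylinder decomposition whose core curve passes through $\phi(B) = C$ and $\phi(E) = F$. This supplies the required decomposition of the second type.

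The argument is quite short because all the substantive work has been done: the existence of one-cylinder directions is classical and the combinatorial restriction that such a core curve must pass through either $\{B,E\}$ or $\{C,F\}$ was recorded above. The only real content here is the clever use of \Cref{lemm:perm} to transfer one kind of one-cylinder decomposition into the other, so there is no genuine obstacle to overcome.
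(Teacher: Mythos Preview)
Your proof is correct and follows essentially the same argument as the paper: use the existence of a one-cylinder decomposition in the square-tiled case, observe that the core curve must pass through either $\{B,E\}$ or $\{C,F\}$, and then apply the affine map $\phi$ from \Cref{lemm:perm} to convert one type into the other. The only difference is cosmetic---you spell out the combinatorial restriction on the Weierstrass pair a bit more explicitly than the paper does.
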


\begin{proof}
Since $L = L_{a,b}$ is a square-tiled surface, it has a one-cylinder decomposition (see \cite[Proposition~5.1]{Hubert:Lelievre} and \cite[Corolary~A.2]{McMullen:discriminant-spin}).
It follows that its core curve passes through either $B$ and $E$, or $C$ and $F$.
If it passes through $B$ and $E$, then we can apply $\phi \in \Aff(L)$ from \Cref{lemm:perm} to obtain a one-cylinder decomposition of $L$ whose core curve passes through $C$ and $F$, or viceversa.
\end{proof}

Now, we are ready to prove \Cref{thm:square}.

\begin{proof}[Proof of \Cref{thm:square}]
By \Cref{coro:one-cyl}, there is a one-cylinder decomposition in $L = L_{a,b}$ such that the core curve $\gamma$ of the cylinder passes through $B$ and $E$.
But then, by \Cref{thm:geom}, the corresponding parabolic element belongs to $K^{+}$ since $E \in \gamma$, but it does not belong to $K^{-}$ since $F \notin \gamma$. Thus, $K^{+} \not\subset K^{-}$. Analogously, $K^{-} \not\subset K^{+}$, and both differences contain parabolic elements.
By \Cref{banane:3}, it follows that the Lyapunov spectrum contains the full square $(0,1)^2$ and so, by Remark~\ref{rk:join:diffusion}, it is contained in the set of joint diffusion rates.
\end{proof}

%************************************************************
%************************************************************
%************************************************************
\appendix

\section{Weak convexity of the set of ergodic measures}
Sigmund has proved~\cite{Sigmund} that the space of ergodic measures on a transitive subshift of finite type
is arcwise connected with respect to the weak-$\ast$ topology. This can be generalized as follows.
The proof is standard, but we could not find any reference for that.

\begin{Theorem}\label{t.convexity}
Let $\Sigma$ be a transitive subshift of finite type and
$\varphi_1,\dots,\varphi_d\colon \Sigma\to \RR$ be continuous functions.
Then for any periodic orbits $O_1,\dots,O_\ell$
and $\alpha_1,\dots,\alpha_\ell> 0$ satisfying $\sum_k \alpha_k=1$,
there exists an ergodic measure $m$ such that $\int \varphi_id m=\sum_k \alpha_k \int\varphi_i d\nu(O_k)$ for each $i=1,\dots,d$,
where $\nu(O_k)$ denotes the invariant probability measure supported on $O_k$.
\end{Theorem}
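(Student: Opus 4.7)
The plan is to build an ergodic measure $m$ realizing the prescribed integrals as the unique invariant measure on a minimal uniquely ergodic subshift $Y \subset \Sigma$, constructed by block-coding a base uniquely ergodic system with carefully chosen block lengths.

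First, using transitivity of $\Sigma$, fix for each ordered pair $(k,k')$ an admissible finite word $u_{k,k'}$ joining a chosen periodic point $x_k \in O_k$ to $x_{k'} \in O_{k'}$, and let $w_k$ denote the period word of $O_k$, of length $r_k$. Choose a minimal uniquely ergodic symbolic system $Y_0 \subset \{1,\dots,\ell\}^\ZZ$ whose unique invariant measure has letter frequencies $\beta_k = (\alpha_k / r_k) / \sum_j (\alpha_j / r_j)$; such a $Y_0$ exists, e.g., as the coding of an irrational rotation on a torus by a suitable partition. For each integer $N \geq 1$, define a factor map $\phi_N \colon Y_0 \to \Sigma$ by replacing each letter $k_j$ of $y = (k_j)_j \in Y_0$ by $w_{k_j}^{N}$ with the connector $u_{k_j, k_{j+1}}$ inserted between consecutive images. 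The image $Y_N \coloneqq \phi_N(Y_0)$ is minimal and uniquely ergodic as a factor of $Y_0$, and its unique invariant measure $m_N$ is therefore ergodic.

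For any continuous $\psi \colon \Sigma \to \RR$ and any $x \in Y_N$, Birkhoff averages of $\psi$ along $x$ converge uniformly to $\int \psi \, dm_N$. Applied to $\psi = \varphi_i$, a direct computation yields
$$
\int \varphi_i \, dm_N \;=\; \frac{N \sum_{k} \beta_k r_k \int \varphi_i\, d\nu(O_k) \;+\; C_i}{N \sum_{k} \beta_k r_k \;+\; D},
$$
where $C_i$ and $D$ are bounded quantities depending only on the connector words and their frequencies in $Y_0$. Since $\beta_k r_k / \sum_j \beta_j r_j = \alpha_k$, the right-hand side converges to $v_i \coloneqq \sum_k \alpha_k \int \varphi_i \, d\nu(O_k)$ as $N \to \infty$. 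To upgrade this asymptotic equality to exact equality simultaneously for all $d$ functions, build $x \in \Sigma$ as a hierarchical concatenation of long blocks extracted from the systems $Y_{N_n}$ with $N_n \to \infty$, arranged so that the orbit closure of $x$ remains minimal and uniquely ergodic and the Birkhoff averages of each $\varphi_i$ along $x$ converge to $v_i$ exactly.

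The main obstacle is the simultaneous enforcement of (a) unique ergodicity of the limiting subshift $Y$, so that $m$ is genuinely ergodic, and (b) exact equality of all $d$ integrals in the limit. The first requires a rigid, uniformly recurrent hierarchical structure; the second requires fine control of the block lengths so that the bounded contributions from the connector words become negligible in the Cesàro limit. A Bratteli--Vershik- or substitution-style construction provides the infinite-dimensional flexibility needed to meet both constraints simultaneously.
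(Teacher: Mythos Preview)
Your approach diverges from the paper's in an interesting way: rather than iteratively shrinking a simplex of periodic measures, you fix a uniquely ergodic base system $Y_0$ with the right letter frequencies and code into $\Sigma$ via variable-length blocks. The first stage is essentially correct, with two caveats: $Y_N$ is not literally a topological factor of $Y_0$ (the block lengths vary with the letter) but rather a tower over it, which is still uniquely ergodic; and your displayed formula needs an additional $o(N)$ error, since points deep inside a $w_k^N$ block are only \emph{close} to $O_k$, not on it. With these fixes you do get ergodic $m_N$ with $\int\varphi_i\,dm_N \to v_i$ as $N\to\infty$.

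The genuine gap is the upgrade to \emph{exact} equality. No fixed $N$ hits the target, and with $\beta_k$ already pinned down by the $\alpha_k$ you have only a one-parameter discrete family, whereas the target lies in $\RR^d$. You then propose a hierarchical concatenation of blocks from different $Y_{N_n}$ and invoke Bratteli--Vershik flexibility, but this is precisely where the work lies and you have not done it: one must explain how, level by level, to choose blocks so that (a) the limiting subshift stays uniformly recurrent (hence uniquely ergodic) and (b) the Birkhoff averages of all $d$ functions converge to the \emph{exact} target vector. These constraints pull against each other, and the phrase ``infinite-dimensional flexibility'' is a promissory note, not an argument. The paper's strategy avoids this tension entirely: it keeps $\ell$ periodic orbits at every stage whose integral vectors span a simplex \emph{containing} the target; specification lets it produce the next generation of periodic orbits anywhere inside that simplex, so the simplices can be nested and shrunk to the target point, making the exact integrals automatic in the limit. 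Ergodicity is then verified separately by showing that Birkhoff averages are uniformly close to the limit on the support. Your unique-ergodicity-first route is conceptually appealing but leaves the exact-hitting problem as an unpaid debt.
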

\begin{proof}
We build inductively a sequence of $\ell$-uples of periodic orbits
$(O_1^n,\dots,O_\ell^n)$ such that the simplex in $\RR^d$
with vertices $(\int\varphi_1d \nu(O^n_k),\dots,\int\varphi_d d\nu(O^n_k))$ for $1\leq k \leq \ell$ decreases towards
the point $(\sum_k \alpha_k \int\varphi_1 d\nu(O_k),\dots,\sum_k \alpha_k \int\varphi_d d\nu(O_k))$.
This is done as follows.
We choose small neighborhood $U_1,\dots,U_\ell$
of the periodic orbits $O_1^n,\dots,O_\ell^n$.
Given numbers $b_1,\dots,b_\ell$,
the specification property of the subshift allows to build a periodic orbit $O'$ with arbitrarily large period
which spends a proportion of time arbitrarily close to $b_i$ in $U_i$, for each $1\leq i\leq \ell$.
By adjusting the values of the $b_i$, the values $(\int\varphi_1d \nu(O'),\dots,\int\varphi_d d\nu(O'))$ in $\RR^d$
can be chosen arbitrarily close any point inside the simplex with vertices
$(\int\varphi_1d \nu(O^n_k),\dots,\int\varphi_d d\nu(O^n_k))$.
One can in this way build $\ell$ periodic orbits $O_1^{n+1},\dots,O_\ell^{n+1}$
as wanted.

Note that at each step $n$ of the construction,
the $\ell$ measures $\nu(O^{n+1}_k)$ can be chosen arbitrarily close together for the weak-$\ast$
topology. More precisely, let $\mathcal{F}=(\psi_j)$ be a countable collection
which is dense in the space of continuous functions from $\Sigma$ to $\RR$.
In the previous construction, one can require that for each function $\psi_j$,
the values $\int \psi_j d\nu(O^n_k)$ converge as $n\to +\infty$
to a number which does not depend on $k$.
This implies that for each $k$ the sequence $\nu(O^n_k)$
converges towards an invariant probability measure $m$ which does not depend on $k$.
Moreover $\int \varphi_id m=\sum_k \alpha_k \int\varphi_i d\nu(O_k)$ for each $i=1,\dots,d$,
as required.

It remains to check that $m$ can be chosen ergodic:
we prove that for any $\psi_i\in \mathcal{F}$ and $q\geq 1$,
there exists $L$ large such that for $m$-almost every point $x$,
the next property $\mathcal{P}(x,L,i,q)$ holds:
\begin{equation}\tag{$\mathcal{P}(x,L,i,q)$}
\left|[\psi_i(x)+\psi_i(\sigma(x))+\dots+\psi_i(\sigma^{L-1}(x))]/L-\int\psi_i d m\right|<1/q.
\end{equation}
By construction, for $n$ large, any periodic orbit $O^n_k$ satisfies
$|\int \psi_i d\nu(O^n_k)-\int\psi_i d m|<1/(4q)$.
Hence there exists $L_0$ large such that for any $x\in \cup_k O^n_k$ the property
$\mathcal{P}(x,L_0,i,3q)$ holds.

Since the periodic orbits $O^n_k$ induce measures that are close,
they contain points that are close. One can thus find finite words
$w^n_k$ such that $\overline {w^n_k}\in O^n_k$ for each $k$,
and such that all the $w^n_k$ have the same initial symbol.
Periodic points in $O^{n+1}_k$ may be built as
periodic words of the form $\overline{(w^n_1)^{\beta^n_1}(w^n_2)^{\beta^n_2}\cdots(w^n_\ell)^{\beta^n_\ell}}$.
Consequently, choosing $L\gg L_0$ large, and the integers $\beta^n_i$ much larger,
for any point $x$ in an orbit $O^{n+1}_k$, the piece of orbit
$x, \sigma(x),\dots,\sigma^{L-1}(x)$ decomposes into pieces of orbits of length $L_0$
arbitrarily close to pieces of orbits inside the $O^{n}_k$
and a subset of iterates whose cardinal is small relative to $L$.
Then, by continuity of $\psi_i$, the property $\mathcal{P}(x,L,i,2q)$ holds.
One builds the subsequent orbits $O^{n'}_k$ in the same way,
hence the property $\mathcal{P}(x,L,i,2q)$ holds for any point in any orbit
$O^{n'}_k$ for $n'$ large enough, so that taking the limit the property
$\mathcal{P}(x,L,i,q)$ holds for any point $x$ in the support of $\mu$.

At each step of the construction, one can require the $\beta^n_i$ to be large enough
and control a larger number of conditions $\mathcal{P}(x,L,i,q)$.
Since the set of pairs $(\psi_i,q)$ is countable, one deduces that for any $(i,q)$,
there exists $L\geq 1$ such that $\mathcal{P}(x,L,i,q)$ holds on any point of the support of $m$,
concluding the ergodicity.
\end{proof}

\printbibliography

\end{document}